\newtheorem{thm}{Theorem}[section]
\newtheorem{lem}[thm]{Lemma}
\newtheorem{prop}[thm]{Proposition}
\theoremstyle{definition}
\theoremstyle{remark}
\newtheorem{rem}[thm]{Remark}
\numberwithin{equation}{section}
\numberwithin{thm}{section}
\newcommand{\eps}{\varepsilon}
\newcommand{\lsm}{\lesssim}
\newcommand{\R}{{\mathbb{R}}}
\renewcommand{\v}[1]{\ensuremath{\mathbf{#1}}}
\newcommand{\vv}{\v v}
\newcommand{\jnab}{\langle \nabla \rangle}
\newcommand{\ed}{\end {document}}
\newcounter{smalllist}
\newcommand{\jt}{\langle t \rangle}
\newcommand{\lrx}{\langle |\xi|\rangle}
\newcommand{\lre}{\langle |\eta|\rangle}
\newcommand{\nai}{{\frac{\nabla_\eta a_i(\xi,\eta)}{|\nabla_\eta a_i(\xi,\eta)|^2}}}
\newcommand{\gai}{{|\nabla_\eta a_i(\xi,\eta)|}}
\newcommand{\ai}{{a_i(\xi,\eta)}}
\title[global solution for Euler Poisson]{Smooth global solutions for the two dimensional
Euler Poisson system}
\author[J. Jang]{Juhi Jang}
\address{University of California
Riverside, CA 92521} \email{juhi.jang@ucr.edu}
\author[D. Li]{Dong Li}
\address{Department of Mathematics, University of Iowa, 14 MacLean Hall, Iowa City, IA 52242}%
\email{mpdongli@gmail.com}
\author[X. Zhang]{Xiaoyi Zhang}
\address{Department of Mathematics, University of Iowa, 14 MacLean Hall, Iowa City, IA 52242}%
\email{zh.xiaoyi@gmail.com}
\begin{document}
\maketitle
\begin{abstract}
The Euler-Poisson system is a fundamental two-fluid model to describe the dynamics
of the plasma consisting of compressible electrons and a uniform ion background. By
using the dispersive Klein-Gordon effect, Guo \cite{Guo98} first constructed a global
smooth irrotational solution in the three dimensional case. It has been conjectured that
same results should hold in the two-dimensional case. The main difficulty in 2D comes from
 the slow dispersion of the linear flow and certain nonlocal resonant obstructions in the nonlinearity.
 In this paper we develop a new method to overcome these difficulties and
 construct smooth global solutions for the 2D Euler-Poisson system.
\end{abstract}

\section{Introduction}
The Euler-Poisson system is a fundamental two-fluid model used to describe the
dynamics of a plasma consisting of moving electrons and ions. Since the
heavy ions move much more slowly than the light electrons, one can treat
them as two independent compressible fluids which only interact through their
self-consistent electromagnetic fields. In the simplest approximation, the ions
are immobile and uniformly distributed in space, providing only a background of
positive charge for the electrons. Neglecting magnetic effects, the equations
of motion describing the dynamics of the compressible electron fluid is then given
by the following Euler-Poisson system in $(t,x) \in [0,\infty) \times \mathbb R^d$,
\begin{align} \label{eq_EP_1}
\begin{cases}
\partial_t n + \nabla \cdot (n \v u) =0, \\
m_e n (\partial_t \v u + (\v u\cdot \nabla) \v u ) + \nabla p(n) = e n \nabla \phi, \\
\Delta \phi = 4\pi e (n-n_0).
\end{cases}
\end{align}

Here $n=n(t,x)$ and $\v u=\v u(t,x)$ denote the density and average velocities of the electrons respectively.
The dimension $d$ will mostly be either $d=3$ or $d=2$ and we shall state clearly the dimensional dependence
of the results. The state of the plasma is completely specified once the density and average velocities are known.
It is useful to keep in mind that such a fluid model description is only accurate when the plasma velocity distribution is
close to the Maxwell-Boltzmann distribution so that the microscopic details of each individual particles
are averaged out. The symbol $e$ and $m_e$ denote the unit charge and
mass of electrons. The first equation in
\eqref{eq_EP_1} is simply the mass conservation law, while the second equation comes from the momentum
balance. The pressure term $p(n)$ is assumed to obey the polytropic $\gamma$-law, i.e.
\begin{align} \label{eq_EP_2}
p(n) = A n^\gamma,
\end{align}
where $A$ is a constant and $\gamma \ge 1$ is usually called the adiabatic index.
The last equation in \eqref{eq_EP_1} is the Poisson equation (or
Gauss law) which computes the electric field self-consistently through the charge distribution.
We assume here that at the equilibrium the density of ions and electrons are both a constant denoted by $n_0$.
The dependence on ions only enter through the term $4\pi e (n-n_0)$ which is the net charge generating
the electric field. It is in this sense that the system \eqref{eq_EP_1} is the simplest two-fluid model
for plasmas since the ions are merely treated as constant uniform sources.
The natural boundary condition for the electric potential is a decaying condition at infinity,
i.e.
\begin{align} \label{eq_EP_3}
\lim_{|x| \to \infty} \phi(t,x) = 0.
\end{align}
Throughout the rest of this paper, we shall consider an irrotational flow\footnote{In the two dimensional
case, one can regard in the usual fashion $\v u=(u_1,u_2,0)$, for which the
condition $\nabla \times \v u \equiv 0$ simplifies
to $\partial_{x_1} u_2 - \partial_{x_2} u_1$.}, i.e.
\begin{align} \label{eq_EP_4}
\nabla \times \v u \equiv 0.
\end{align}
By a simple computation, it is easy to check that the irrotational condition is preserved for all
time.
The system \eqref{eq_EP_1}--\eqref{eq_EP_4} describes the dynamics of an irrotational
compressible Eulerian electronic fluid
moving against the ionic forces.

Our main objective is to construct global in time smooth solutions for the Euler-Poisson system.
This is not an easy task since this is a hyberbolic conservation law with zero dissipation which
has remain largely unsolved and no general theory is available. The Euler-Poisson system has
close connection with the well-known compressible Euler equations. Indeed in \eqref{eq_EP_1}
if the electric field term $\nabla \phi$ is dropped, one recovers the usual Euler equations
for compressible fluids. In \cite{Si85}, Sideris considered the 3D compressible Euler equation
for a classical polytropic ideal gas with adiabatic index $\gamma>1$. For a class of
initial data which coincide with a constant state outside a ball, he proved that the lifespan
of the corresponding $C^1$ solution must be finite. In \cite{Ra89} Rammaha extended this
result to the 2D case. For the Euler-Poisson system, Guo and Tahvildar-Zadeh \cite{GuoTZ99}
established a "Siderian" blowup result for spherically symmetric initial data.
Recently Chae and Tadmor \cite{CT08} proved finite-time blow-up for $C^1$ solutions of
a class of pressureless attractive Euler-Poisson equations in $\mathbb R^n$, $n\ge 1$.
These negative results showed the abundance of shock waves for large solutions.

The point of departure in the analysis of \eqref{eq_EP_1} from that of the compressible Euler
equations begins with understanding the small irrotational perturbations of the equilibrium state
$n\equiv n_0$, $\v u\equiv 0$. For the 3D compressible Euler equation with irrotational initial data
$(n,\v u) = (\epsilon \rho + n_0, \epsilon \v v)$, where $\rho \in \mathcal S(\mathbb R^3)$,
$\v v\in \mathcal S(\mathbb R^3)^3$ ($\mathcal S(\mathbb R^3)$ is the usual Schwartz space), Sideris
\cite{Si91} proved that the lifespan of the classical solution $T_\epsilon > \exp( C/\epsilon)$.
For the upper bound it follows from his previous paper \cite{Si85} that $T_\epsilon < \exp(C/\epsilon^2)$
under some mild conditions on the initial data. Sharper results are also available. For initial
data which is spherically symmetric and is smooth compact $\epsilon$-perturbation of the constant state,
Godin \cite{Godin05} obtained by using a suitable approximation solution the precise asymptotic of
the lifespan $T_\epsilon$ as
\begin{align*}
\lim_{\epsilon \to 0} \epsilon \log T_\epsilon = T^*,
\end{align*}
where $T^*$ is a constant. These results rely crucially on the observation that after some simple manipulations,
the compressible Euler equation in rescaled variables is given by a vectorial nonlinear wave equation with pure
quadratic nonlinearities. The linear part of the wave equation decays at most at the speed $t^{-(d-1)/2}$ which
in 3D case is not integrable. Unless some additional structure e.g. the null condition \cite{C86, K86}
is imposed on the nonlinearity, one cannot in general expect global existence of small solutions.
For the Euler-Poisson system \eqref{eq_EP_1}, the situation for small irrotational perturbations
of the equilibrium state is quite different. By a straightforward computation, the linear part of the
Euler-Poisson system for irrotational flows has the form
\begin{align*}
\begin{cases}
\partial_{tt} n - \frac 1 {m_e} p^\prime (n_0) \Delta n + \omega_p^2 n = 0, \\
\partial_{tt}\v u - \frac 1 {m_e} p^\prime (n_0) \Delta \v u + \omega_p^2 \v u=0.
\end{cases}
\end{align*}
Here $\omega_p = \sqrt{ 4\pi e^2 n_0 /m_e }$ is so called plasma frequency. This new term $\omega_p^2$
which is absent in the pure Euler case makes the linear part of the Euler-Poisson system into a Klein-Gordon
system for which the linear solutions have an enhanced decay of $(1+t)^{-d/2}$. This is in sharp contrast with
the pure Euler case for which the decay is only $t^{-(d-1)/2}$. Note that in $d=3$, $(1+t)^{-d/2}=(1+t)^{-3/2}$
which is integrable in $t$. In a remarkable paper \cite{Guo98}, by exploiting the crucial decay property of the
Klein-Gordon flow in 3D, Guo \cite{Guo98} modified Shatah's normal form method \cite{Sh85} and
constructed a smooth irrotational global solution to \eqref{eq_EP_1} around the
equilibrium state $(n_0,0)$ for which the perturbations decay at a rate $C_p \cdot (1+t)^{-p}$ for any
$1<p<3/2$ (here $C_p$ denotes a constant depending on the parameter $p$). Note in particular that
the sharp decay $t^{-3/2}$ is marginally missed here due to a technical complication, namely in employing
one of the $L^p \to L^\infty$ decay estimates, the nonlinear term has a Riesz-type singular operator in the
front and one is forced to take $p>1$ in bounding the corresponding part.

As was already mentioned, the purpose of this paper is to construct smooth global solutions to
\eqref{eq_EP_1} for the two-dimensional case. One can already sense a bit that this situation
is more challenging than the $d=3$ case since the linear solutions to the Klein-Gordon system in
$d=2$ decays only at $(1+t)^{-1}$ which is not integrable, in particular making the
strategy in \cite{Guo98} difficult to apply. As we shall see shortly, the Euler-Poisson
system after writing in terms of the perturbed variables can be recast into a quasi-linear Klein-Gordon
system with nonlocal nonlinearities.  To understand and spell out the main difficulties
associated with the analysis of this nonlocal quasi-linear Klein-Gordon system, we first review some
known results on nonlinear Klein-Gordon equations. For general scalar quasi-linear Klein-Gordon equations
in 3D with quadratic type nonlinearities, global small smooth solutions were constructed independently by
Klainerman \cite{K85} using the invariant vector field method and Shatah \cite{Sh85}
using a normal form method. There are
essential technical difficulties to employ Klainerman's invariant vector field method to the Klein-Gordon
system associated with \eqref{eq_EP_1}. Namely after reformulating in terms of perturbed the variables,
the nonlinear part of the system \eqref{eq_EP_1} has a Riesz type nonlocal term which comes from solving the
Poisson equation for the electric field in \eqref{eq_EP_1}, i.e. a term of the form
\begin{align*}
(-\Delta)^{-1} \partial_{x_i} \partial_{x_j} \bigl( \text{quadratic terms} \bigr).
\end{align*}
The Klainerman invariant vector fields consist of infinitesimal generators which commute well with
the linear operator $\partial_{tt} -\Delta +1$. The most problematic part comes from the Lorentz boost
$\Omega_{0j} = t \partial_{x_j} + x_j \partial_t$. Whilst the first part $t \partial_{x_j}$ commutes
naturally with the Riesz operator $R_{ij}=(-\Delta)^{-1} \partial_{x_i} \partial_{x_j}$,
the second part $x_j \partial_t $ interacts rather badly with $R_{ij}$ and produces a commutator
which scales as
\begin{align*}
[x_j \partial_t, R_{ij}] \sim \partial_t |\nabla|^{-1}.
\end{align*}
After repeated commutation of these operators one obtain in general terms of the form
$|\nabla|^{-N}$ which makes the low frequency part of the solution out of control.
It is for this reason that in 3D case Guo \cite{Guo98} adopted Shatah's method of normal
form in $L^p$ ($p>1$) setting for which the Riesz term $R_{ij}$ causes no trouble.
We turn now to the 2D Klein-Gordon equations with pure quadratic nonlinearities. In this case, direct
applications of either Klainerman's invariant vector field method or Shatah's normal form method
are not possible since the linear solutions only decay at a speed of $(1+t)^{-1}$ which is not integrable
and makes the quadratic nonlinearity quite resonant. In \cite{ST93}, Simon and Taflin constructed wave
operators for the 2D semilinear Klein-Gordon system with quadratic nonlinearities.
In \cite{Oz96}, Ozawa, Tsutaya and Tsutsumi considered the Cauchy problem and
constructed smooth global solutions by first transforming the quadratic nonlinearity into a cubic one
using Shatah's normal form method and then applying Klainerman's invariant vector field method to obtain
decay of intermediate norms. Due to the nonlocal complication with the Lorentz boost
which we explained earlier, this approach
seems difficult to apply in the 2D Euler-Poisson system.

Our approach in this paper is inspired by the recent work of Gustafson, Nakanishi and Tsai \cite{GNT06}
on the Gross-Pitaevskii equation of the form
\begin{align*}
i\partial_t \psi = -\Delta \psi + ( |\psi|^2-1) \psi,
\end{align*}
where $\psi: \; \mathbb R^{1+d} \to \mathbb C$  is solved with the boundary condition
\begin{align*}
|\psi(t,x) | \to 1, \quad \text{as $|x| \to \infty$}.
\end{align*}
The main objective in \cite{GNT06} is to investigate large time behavior of solutions
$\psi=1+$"small". It is then natural to look at the perturbation $u=\psi-1$ which satisfies
the equation
\begin{align} \label{972010_2}
i\partial_t u + \Delta u -2 Re(u) = F(u), \quad F(u):= u^2 +2|u|^2 +|u|^2 u.
\end{align}
To make the problem complex linear, introduce change of variable
\begin{align*}
v= \mathcal Ku := \sqrt{ (-\Delta)^{-1} (2-\Delta) } Re(u) +i Im(u).
\end{align*}
Then for $v$ the equation takes the form
\begin{align} \label{972010_1}
i \partial_t v - \sqrt{(-\Delta) (2-\Delta) } v = - i \mathcal K i F(\mathcal K^{-1} v).
\end{align}
In \cite{GNT06}, Gustafson, Nakanishi and Tsai considered the final data problem for
\eqref{972010_1} (i.e. Cauchy data at $t=\infty$),
\begin{align*}
v(t) = e^{- i \sqrt{-\Delta (2-\Delta)} t } \phi
+ \int_t^\infty e^{- i \sqrt{-\Delta (2-\Delta) } (t-s) }
\mathcal K i F(\mathcal K^{-1} v(s) ) ds,
\end{align*}
where $\phi$ is the final data. Under some decay and smallness assumptions on $\phi$,
they constructed a unique global solution for \eqref{972010_1} in dimensions
$d=2,3$. In particular for $d=2$, the solutions has a decay
\begin{align} \label{972010_3}
\| v(t) -
e^{ - i \sqrt{-\Delta(2-\Delta)} t } \phi \|_{\dot H^1}
\le C_{\epsilon} t^{ -1+\epsilon },
\end{align}
for any $\epsilon \in (0,1)$ and $C_{\epsilon}$ is a constant depending on $\epsilon$.
Note that this marginally misses the sharp decay $t^{-1}$ due to some technical issues
which we explain now. The first step in \cite{GNT06} is to estimate the first nonlinear
iterate, i.e.
\begin{align*}
z_1(t) = \int_t^\infty e^{- i \sqrt{-\Delta (2-\Delta) } (t-s) }
\mathcal K i F(\mathcal K^{-1} v_1(s) ) ds,
\end{align*}
where
$$ v_1(t) =e^{ - i \sqrt{-\Delta(2-\Delta)} t } \phi $$
is simply the linear solution. By a careful space-time  phase estimate (see Lemma 4.1 in \cite{GNT06}),
the authors showed that
\begin{align*}
\| z_1(t) \|_{\dot H^1} \lesssim t^{-1} (\log t)^2, \quad t\ge 2,
\end{align*}
here we suppressed the dependence on the final data $\phi$. The logarithm part of $t$  comes mainly
from certainly degeneracies of the phase near $\xi =0$. Since the nonlinearity is quadratic,
one immediately observe that due to this logarithm loss we cannot close the simple $\dot H^1$ estimates for further
iterates (in fact there also some problems with low frequencies which we shall not dwell further here).
To remedy this problem, Gusatfson, Nakanishi and Tsai then introduced a novel normal form
for the original equation written in the function $u$ (see \eqref{972010_2})
\begin{align*}
w = u + P_{\le 1} ( |u|^2/2),
\end{align*}
where $P_{\le 1}$ is the usual Littlewood-Paley projector localized to frequency $|\xi| \lesssim 1$.
After this transformation and using Strichartz estimates together with a fixed point argument,
the authors were able to settle for this logarithmic loss and obtain the existence of global solutions,
in particular establishing \eqref{972010_3}.

Our analysis of the Euler-Poisson system starts with sharpening the
analysis of the Gross-Pitaevskii equation in 2D. In \cite{Liprep},
the second author of this paper observed that provided with some
suitable transform and choosing a good working space, one can remove
the epsilon in \eqref{972010_3}, in particular obtaining the sharp
decay $1/t$ in 2D. A key observation in this work is that one should
obtain the critical $1/t$ decay in the energy estimates of the first
nonlinear estimate, and after that only energy estimates are needed
(in particular, no Strichartz estimates are needed!). This intuition
is further arrested in our subsequent work \cite{LZprep}, where we
revisited the classical semilinear Klein-Gordon equations in 2D with
quadratic nonlinearity and also semilinear wave equations in 3D with
a quadratic nonlinearity satisfying the null condition. Note that in
both cases the linear solution decay at the critical speed $1/t$ for
which the quadratic nonlinearity is resonant. In that work we
constructed wave operators for the final data problem and obtain the
sharp decay of the nonlinear part of the solution. As a byproduct of
our analysis in \cite{LZprep}, one can reinforce the notion of the
null condition as an annihilation condition on the quadratic
interaction of linear solutions.

In this work, we take the point of view that the Euler-Poisson system is
a quasi-linear Klein-Gordon system with nonlocal quadratic nonlinearities.
We shall construct smooth global solutions to this system by solving
the final data problem (i.e constructing the wave operators). As is well-known
with quasilinear systems, the Strichartz estimates will suffer loss of derivatives unless
one considers energy estimates. We shall resolve this difficulty by performing a delicate localization
in time argument. This localization is only carried out for the part of the solution which
has no resonances and only for the corresponding quasi-linear part of the nonlinearity. We then introduce
a new iteration scheme to construct the solution as a uniform limit of iterates. The crucial
point in the iteration step is to obtain a priori uniform sharp decay estimates of $H^m$ norms of
the nonresonant part of the solution. This is done by a delicate bootstrap argument. We should
stress that our new iteration scheme
is quite robust and should have applications to many other systems for which the nonlinearity is resonant.
To better illustrate our technique, we summarize below the main steps of the proof.

\texttt{Step 1}. Reformulation. For simplicity set all physical constants $e$, $m_e$, $4\pi$ and $A$ to be
one. To simplify the presentation, we also set $\gamma=3$. Define the rescaled functions
\begin{align*}
u(t,x) & = \frac{n(t/c_0, x) -n_0} {n_0}, \\
\v v(t,x) & = \frac 1 {c_0} \v u (t/c_0, x),
\end{align*}
where the sound speed is $c_0 = \sqrt 3 n_0$. For simplicity we shall set $n_0 = 1/3$ so that the characteristic
wave speed is unity. Then in terms of the new functions $(u, \v v)$, the Euler-Poisson system
\eqref{eq_EP_1}--\eqref{eq_EP_4} takes the form
\begin{align*}
\begin{cases}
(\square+1) u = \nabla \cdot \Bigl( (\vv \cdot \nabla) \vv + u
\nabla u \Bigr) - \partial_t \Bigl( \vv \cdot \nabla u + u \nabla \cdot \vv \Bigr) \\
(\square+1) \vv = \nabla\Bigl( (\vv \cdot\nabla )u + u \nabla \cdot \vv \Bigr)
-\partial_t \Bigl( (\vv \cdot \nabla) \vv + u \nabla u \Bigr) - \nabla \Delta^{-1} \nabla \cdot (u \vv),
\end{cases}
\end{align*}
or in a more compact notation
\begin{align*}
\begin{cases}
(\square +1) u = G_1(u,\vv) \\
(\square+1) \vv = G_2(u,\vv).
\end{cases}
\end{align*}

\texttt{Step 2}. Remove resonances and fine decomposition of the solution. Define linear
solutions according to \eqref{eha}--\eqref{ehb}. We then identify the resonant part of the solution
corresponding to the quadratic interaction of the linear solutions. This is done by solving
the system (see \eqref{ePhia}--\eqref{ePhib})
\begin{align*}
\begin{cases}
(\square+1) \Phi_1 & =G_1(h_1,h_2), \\
(\square+1) \Phi_2 & = G_2(h_1,h_2)
\end{cases}
\end{align*}
with a decaying condition at $t=\infty$. We then decompose the solution as
\begin{align*}
\begin{cases}
u= \bar u + h_1 + \Phi_1, \\
\v v = \bar{\v v}+ h_2 + \Phi_2.
\end{cases}
\end{align*}
The new variable $(\bar u, \bar{\v v})$ no longer contains resonances and satisfy
\begin{align*}
\begin{cases}
(\square +1) \bar u &= \tilde G_1(\bar u, \bar{\v v}, h, \Phi), \\
(\square +1) \bar{\v v} &= \tilde G_2(\bar u, \bar{\v v}, h, \Phi),
\end{cases}
\end{align*}
where $\tilde G_1$, $\tilde G_2$ are corresponding new nonlinearities.

\texttt{Step 3}. Sharp decay estimates for $h_1$, $h_2$ and the resonances $\Phi_1$, $\Phi_2$. In this
step we perform a careful space-time analysis of the bilinear integral for $\Phi_1$, $\Phi_2$.
Note that the $L^\infty$ norms of the source terms $h_1$, $h_2$ decay as $1/t$ which is a standard
$L^1-L^\infty$ estimate. The crucial point in our analysis is to establish sharp $1/t$ decay of $H^m$
norms of $\Phi_1$, $\Phi_2$. This is needed later for the iteration step.

\texttt{Step 4}. Localization in time and definition of the iteration scheme. Let $\chi \in C_c^\infty(\mathbb R)$
be such that $\chi(t)=1$ for $0\le t \le 1$ and $\chi(t) = 0$ for $t\ge 2$. For integers $n\ge 0$, define
\begin{align*}
\chi_n(t) = \chi(2^{-n} t).
\end{align*}
Observe that $\chi_n$ is localized to the regime $t \lesssim 2^n$. Let $\bar u^{(0)} =0$, $\bar{\v v}^{(0)} =0$.
For $n\ge 1$ inductively define

\begin{align*}
\begin{cases}
(\square+1) \bar u^{(n)} &= \chi_n (t) (\tilde G_{11}( \bar u^{(n)}, \cdot) + \tilde G_{12} (\bar{\v v}^{(n)}, \cdot))
+ \tilde G_{13} (\bar u^{(n-1)}, \bar{\v v}^{(n-1)}, h, \Phi), \\
(\square +1) \bar{\v v}^{(n)} & = \chi_n (t) ( \tilde G_{21}( \bar u^{(n)}, \cdot)
+ \tilde G_{22}( \bar{\v v}^{(n)}, \cdot) ) + \tilde G_{23} (\bar u^{(n-1)}, \bar {\v v}^{(n-1)}, h,\Phi).
\end{cases}
\end{align*}

Here $\tilde G_{11}$, $\tilde G_{12}$ denote the quasi-linear part of the nonlinearity in $\tilde G_1$. Similar convention
applies to $\tilde G_2$ (see \eqref{eq10a}--\eqref{eq10b} for more details). Note that the time localization is only
applied in the quasi-linear part of the nonlinearity containing only the interactions of the non-resonant part
of the solution.

\texttt{Step 5}. Solvability of the iteration system and non-uniform energy estimates.
The solvability of the iteration system is not immediately obvious, due to the quasi-linear
nature of the problem. In this respect our strategy is to take advantage of the smooth time cutoff
function $\chi_n(t)$ which vanishes for $t\ge 2^{n+1}$. In the large time regime $t\ge 2^{n+1}$,
the nonlinearity of the iteration system does not contain $(\bar u^{(n)}, \bar{\v v}^{(n)})$ and
we can solve for all $t\ge 2^{n+1}$ by using the fundamental solution with source term
depending only on the previous iteration $(\bar u^{(n-1)}, \bar{\v v}^{(n-1)} )$.
After this is done we solve
\textit{backwards} the same system for $t\le 2^{n+1}$ with initial data
$(\bar u^{(n)} (2^n), \bar{\v v}^{(n)} (2^n))$. We then perform an energy estimate. Denote
\begin{align*}
A_k(t) &= \| \bar u^{(k)} (t) \|_{H^m} + \| (\bar u^{(k)} (t) )^\prime \|_{H^m}
+ \| \bar \vv^{(k)}(t) \|_{H^m}  \\
 &\qquad+ \| ( \bar \vv^{(k)} (t) )^\prime \|_{H^m}, \quad \forall\, k\ge 0.
\end{align*}
Our inductive hypothesis is that
\begin{align*}
A_{n-1}(t) \le \frac{\epsilon} {\langle t \rangle}, \quad\forall\, t\ge 0.
\end{align*}
Here $\langle t \rangle = \sqrt{1+t^2}$ is the usual Japanese bracket notation.
We then use a crude energy estimate to obtain
\begin{align*}
A_n (t) \le \frac{ K_n \epsilon} {\langle t \rangle},
\end{align*}
where $K_n$ is a constant depending on the iteration $n$. This dependence on the iteration number $n$ comes from
the part where we solve the iteration system \textit{backwards} for $t\le 2^{n+1}$.

\texttt{Step 6}. Bootstrap and contraction. In this step we upgrade the estimate on $A_n$ and remove the dependence
on $n$. This is done by an infinite-time Gronwall argument (see Lemma \ref{gron_1}) which rests on the intuition
that nonlinear decay is dictated by the sharp decay of the source terms. After obtaining the uniform energy estimate and
closing the induction hypothesis, we perform the usual contraction argument. There is one more small twist in the
argument, namely we have to first show the strong contraction with a weak time decay $\langle t \rangle^{-1/2}$,
but this is enough to extract the limiting solution since the true decay $\langle t \rangle^{-1}$
from the iterates can be passed to
the limit for any finite $t$.

\noindent
\textbf{Organization of the paper}. In section \ref{sec_ref} we give the reformulation of
the 2D Euler-Poisson system and state the main results. In section \ref{sec_red} we extract
the non-resonant part of the solution and introduce the iteration scheme. Section \ref{sec_phi}
is devoted to the estimates of the resonant part of the solution. In section \ref{sec_solv}
we prove solvability of the iteration system and establish some non-uniform energy decay estimates.
In section \ref{sec_uni} we upgrade the decay estimate of the iterates and establish uniform in time
energy estimates. In the last section \ref{sec_contraction}, we perform a contraction argument and
prove the existence and uniqueness of the desired classical solution.

\section*{Acknowledgements}
The second author and the third author would like to thank the
Institute for Advanced Study for its hospitality, where the main
part of the work was conducted. J. Jang is supported in part by NSF
Grant DMS-0908007. D. Li is supported in part by NSF Grant-0908032.
X. Zhang is supported by an Alfred P. Sloan fellowship and also a
start-up funding from University of Iowa.

\section{Reformulation and main results} \label{sec_ref}
We first transform the system \eqref{eq_EP_1}--\eqref{eq_EP_4} in terms of certain perturbed
variables. For simplicity set all physical constants $e$, $m_e$, $4\pi$ and $A$ to be
one. To simplify the presentation, we also set $\gamma=3$ although other cases of $\gamma$ can be easily
included in the discussion. Define the rescaled functions
\begin{align*}
u(t,x) & = \frac{n(t/c_0, x) -n_0} {n_0}, \\
\v v(t,x) & = \frac 1 {c_0} \v u (t/c_0, x), \\
\psi(t,x) & = \phi (t/c_0, x),
\end{align*}
where the sound speed is $c_0 = \sqrt 3 n_0$. For simplicity we shall set $n_0 = 1/3$ so that the characteristic
wave speed is unity. The Euler-poisson system \eqref{eq_EP_1}--\eqref{eq_EP_4} in new variables take the form
\begin{align*}
\begin{cases}
\partial_t u + \nabla \cdot \v v + \v v \cdot \nabla u + u \nabla \cdot \v v =0, \\
\partial_t \v v + \nabla u + (\v v \cdot \nabla) \v v + u \nabla u = 3 \nabla \psi, \\
\partial_t \nabla \psi = -\frac 13 \v v - \frac 13 \nabla \Delta^{-1} \nabla \cdot( u \v v),\\
\Delta \psi = \frac 13 u.
\end{cases}
\end{align*}

Since the flow is irrotational $\nabla \times \v v \equiv 0$, we can then eliminate the
electric field $\nabla \psi$ using the Poisson equation. Take one more derivative and after some
simple computation, we arrive at
the klein-Gordon form of the 2D Euler-Poisson system written for the perturbed variables
$(u,\v v)$:
\begin{align} \label{e1}
\begin{cases}
(\square+1) u = \nabla \cdot \Bigl( (\vv \cdot \nabla) \vv + u
\nabla u \Bigr) - \partial_t \Bigl( \vv \cdot \nabla u + u \nabla \cdot \vv \Bigr) \\
(\square+1) \vv = \nabla\Bigl( (\vv \cdot\nabla )u + u \nabla \cdot \vv \Bigr)
-\partial_t \Bigl( (\vv \cdot \nabla) \vv + u \nabla u \Bigr) - \nabla \Delta^{-1} \nabla \cdot (u \vv).
\end{cases}
\end{align}
To simplify the discussion later, it is useful to denote the system as
\begin{align} \label{e1a}
\begin{cases}
(\square +1) u = G_1(u,\vv) \\
(\square+1) \vv = G_2(u,\vv).
\end{cases}
\end{align}
In Duhamel formulation, we write
\begin{align*}
\begin{pmatrix}
u(t) \\
\vv(t)
\end{pmatrix}
=\cos(t-T) \langle \nabla \rangle
\begin{pmatrix}
u(T) \\
\vv (T)
\end{pmatrix}
+ \frac{ \sin(t-T)\langle \nabla \rangle}{\langle \nabla \rangle}
\begin{pmatrix}
u_t(T) \\
\vv_t(T)
\end{pmatrix} \\
+ \int_T^t
\frac{\sin (t-s) \langle \nabla \rangle } {\langle \nabla \rangle}
\begin{pmatrix}
G_1(u,\vv) (s) \\
G_2(u, \vv)(s)
\end{pmatrix}
ds.
\end{align*}
Note that
\begin{align*}
\cos(t-T) \langle \nabla \rangle & =\cos t \langle \nabla \rangle \cos T \langle \nabla \rangle
+ \sin t \langle \nabla \rangle \sin T \langle \nabla \rangle \\
\sin(t-T) \langle \nabla \rangle & = \sin t \langle \nabla \rangle \cos T \langle \nabla \rangle
- \cos t \langle \nabla \rangle \sin T \langle \nabla \rangle.
\end{align*}
Therefore
\begin{align}
\begin{pmatrix}
u(t) \\
\vv (t)
\end{pmatrix}
& = \cos t \langle \nabla \rangle \left( \cos T \langle \nabla \rangle
\begin{pmatrix}
u(T) \\
\vv (T)
\end{pmatrix}
- \frac {\sin T \langle \nabla \rangle }{\langle \nabla \rangle}
\begin{pmatrix}
u_t(T) \\
\vv_t(T)
\end{pmatrix}
\right) \label{c1} \\
&\; + \frac {\sin t \langle \nabla \rangle}{ \langle \nabla \rangle}
\left( \langle \nabla \rangle \sin T \langle \nabla \rangle
\begin{pmatrix}
u(T) \\
\vv (T)
\end{pmatrix}
+ \cos T \langle \nabla \rangle
\begin{pmatrix}
u_t(T) \\
\vv_t (T)
\end{pmatrix}
\right) \label{c2} \\
& \quad + \int_T^t \frac { \sin (t-s) \langle \nabla \rangle} { \langle \nabla \rangle}
\begin{pmatrix}
G_1( u, \vv ) (s) \\
G_2( u, \vv ) (s)
\end{pmatrix}
ds.
\end{align}
Consider the linear flow generated by the homogeneous equation
\begin{align*}
\begin{cases}
(\square+1) \tilde u = 0 \\
(\square+1) \tilde \vv = 0
\end{cases}
\end{align*}
with initial data
\begin{align*}
\begin{pmatrix}
\tilde u (0) \\
\tilde \vv (0)
\end{pmatrix}
=
\begin{pmatrix}
\tilde u_0 \\
\tilde \vv_0
\end{pmatrix},
\quad
\begin{pmatrix}
\tilde u_t(0) \\
\tilde \vv_t(0)
\end{pmatrix}
=
\begin{pmatrix}
\tilde u_1 \\
\tilde \vv_1
\end{pmatrix}.
\end{align*}
It is not difficult to check that
\begin{align*}
\begin{pmatrix}
\tilde u(t) \\
\tilde u_t(t)
\end{pmatrix}
=
\begin{pmatrix}
\cos t \jnab \quad \frac{\sin t \jnab}{\jnab} \\
-\jnab \sin t \jnab \quad \cos t \jnab
\end{pmatrix}
\begin{pmatrix}
\tilde u_0 \\
\tilde u_1
\end{pmatrix}
\end{align*}
and similarly
\begin{align*}
\begin{pmatrix}
\tilde \vv(t) \\
\tilde \vv_t(t)
\end{pmatrix}
=
\begin{pmatrix}
\cos t \jnab \quad \frac{\sin t \jnab}{\jnab} \\
-\jnab \sin t \jnab \quad \cos t \jnab
\end{pmatrix}
\begin{pmatrix}
\tilde \vv_0 \\
\tilde \vv_1
\end{pmatrix}.
\end{align*}

Write the linear flow propagator
\begin{align*}
S(t) =
\begin{pmatrix}
\cos t \jnab \quad \frac{\sin t \jnab}{\jnab} \\
-\jnab \sin t \jnab \quad \cos t \jnab
\end{pmatrix}.
\end{align*}

For given data $(f_1,g_1)$, $(f_2,g_2)$, the wave operator problem for
\eqref{e1} amounts to finding the nonlinear solution ($u(t)$, $ \vv(t)$) such that
\begin{align}
S(-T)
\begin{pmatrix}
u(T) \\
u_t(T)
\end{pmatrix}
&\rightarrow
\begin{pmatrix}
f_1 \\
g_1
\end{pmatrix},
\label{c3} \\
S(-T)
\begin{pmatrix}
\vv(T) \\
\vv_t (T)
\end{pmatrix}
&\rightarrow
\begin{pmatrix}
f_2 \\
g_2
\end{pmatrix},
\quad \text{as $T\to \infty$.} \label{c4}
\end{align}
The convergence here takes place in some Sobolev space $H^m$ which will become clear later.
Plugging \eqref{c3}, \eqref{c4} into \eqref{c1}, \eqref{c2}, we get
\begin{align}
\begin{pmatrix}
u(t) \\
\vv (t)
\end{pmatrix}
& = \cos t \jnab \begin{pmatrix} f_1 \\ f_2 \end{pmatrix}
+ \frac {\sin t \jnab} { \jnab} \begin{pmatrix} g_1 \\ g_2 \end{pmatrix} \notag \\
& \quad + \int_\infty^t \frac{\sin(t-s) \jnab} {\jnab}
\begin{pmatrix}
G_1(u, \vv) (s) \\
G_2(u, \vv) (s)
\end{pmatrix}
ds. \label{eq_main}
\end{align}
The system \eqref{eq_main} will be our main object of study. Here is our theorem.

\begin{thm}
Let $m\ge 0$ be an integer.
Let ($f_1^0,f_2^0$), ($g_1^0$, $g_2^0$) be given functions such that
\begin{align} \label{106pm}
\sup_{\xi \in \R^2} \langle \xi \rangle^{m+24}
\sum_{\substack{|\alpha|\le 2 \\ i=1,2}} | \partial_{\xi}^\alpha \hat f_i^0(\xi) |
  +  \sum_{\substack{|\alpha|\le 2 \\ i=1,2}} | \partial_{\xi}^\alpha \hat g_i^0(\xi) |<\infty,
\end{align}
where $\hat f_i^0$, $\hat g_i^0$ are their Fourier transforms.
 Let
\begin{align*}
\begin{pmatrix}
f_1 \\
f_2
\end{pmatrix}
= \epsilon
\begin{pmatrix}
f_1^0 \\
f_2^0
\end{pmatrix},
\quad
\begin{pmatrix}
g_1 \\
g_2
\end{pmatrix}
= \epsilon
\begin{pmatrix}
g_1^0 \\
g_2^0
\end{pmatrix}.
\end{align*}
Then there exists $\epsilon_0>0$, such that for any $0<\epsilon <\epsilon_0$, there exists
a unique global solution to \eqref{eq_main} written in the form
\begin{align*}
\begin{pmatrix}
u(t) \\
\vv(t)
\end{pmatrix}
= \cos t \jnab
\begin{pmatrix}
f_1 \\
f_2
\end{pmatrix}
+ \frac {\sin t \jnab}{\jnab}
\begin{pmatrix}
g_1 \\
g_2
\end{pmatrix}
+ \begin{pmatrix}
z_1(t) \\
z_2(t)
\end{pmatrix},
\end{align*}
where ($z_1(t)$, $z_2(t)$)$\in H^m$, and
\begin{align*}
\sup_{t\ge 0} \langle t \rangle (\| z_1(t) \|_{H^m} + \| z_2(t) \|_{H^m} ) \lsm \epsilon.
\end{align*}

\end{thm}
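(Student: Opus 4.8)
The plan is to solve the wave-operator problem \eqref{eq_main} by the six-step scheme sketched in the introduction, the quadratic nonlinearity being treated by first subtracting off its resonant part and then iterating on the non-resonant remainder. \textbf{Decomposition.} First I would introduce the linear evolution $(h_1,h_2)$ solving $(\square+1)h_i=0$ with the prescribed asymptotic data $(f_i,g_i)=\epsilon(f_i^0,g_i^0)$, so that $h_i(t)=(\cos t\jnab)f_i+\frac{\sin t\jnab}{\jnab}g_i$; the $C^2$ weighted Fourier control in \eqref{106pm} gives, by stationary phase for the $2$D Klein--Gordon propagator, the sharp bound $\|h_i(t)\|_{L^\infty}\lsm\epsilon\langle t\rangle^{-1}$ together with $H^{m+c}$ control of $h_i$ and $h_i'$ for a suitable fixed loss $c$. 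Next I would define the resonant correction by $(\square+1)\Phi_i=G_i(h_1,h_2)$ with vanishing data at $t=\infty$, i.e. $\Phi_i(t)=\int_\infty^t\frac{\sin(t-s)\jnab}{\jnab}G_i(h_1,h_2)(s)\,ds$, and substitute $u=\bar u+h_1+\Phi_1$, $\vv=\bar\vv+h_2+\Phi_2$ into \eqref{e1a}. Since each $G_i$ is quadratic (nonlocal only through the Riesz-type operator $\nabla\Delta^{-1}\nabla\cdot$, which is bounded on $H^m$), the equations for $(\bar u,\bar\vv)$ become $(\square+1)\bar u=\tilde G_1$, $(\square+1)\bar\vv=\tilde G_2$ where every term of $\tilde G_i$ is either at least quadratic in $(\bar u,\bar\vv)$ or a product of one $(\bar u,\bar\vv)$-factor with one factor from the known decaying data $(h,\Phi)$; crucially the purely $h$--$h$ interaction, the resonant obstruction, has been absorbed into $\Phi$ and no longer appears.

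\textbf{Sharp decay of $\Phi$.} The first genuine difficulty, and the technical core, is to show $\|\Phi_i(t)\|_{H^m}+\|\Phi_i'(t)\|_{H^m}\lsm\epsilon^2\langle t\rangle^{-1}$ — the \emph{sharp} $1/t$ rate, with no logarithmic loss (precisely what the Gross--Pitaevskii analysis of \cite{GNT06} misses). I would write the bilinear Duhamel integral on the Fourier side as a superposition of oscillatory integrals $\int_\infty^t\!\!\int_{\R^2}e^{is\,a_i(\xi,\eta)}\,m_i(\xi,\eta)\,\hat h(\eta)\,\hat h(\xi-\eta)\,d\eta\,ds$ (times an overall $e^{\pm it\langle\xi\rangle}/\langle\xi\rangle$), where the phase $a_i(\xi,\eta)$ is the relevant sign combination of $\langle\xi\rangle,\langle\eta\rangle,\langle\xi-\eta\rangle$ and $m_i$ collects the symbols. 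Off the zero set of $\nabla_\eta a_i$ one integrates by parts in $\eta$ using the multiplier $\nai$, which costs powers of $\lrx$ and $\lre$ absorbed by \eqref{106pm}; near the stationary points of $a_i$, near its resonant set $\{a_i=0\}$, and near $\xi=0$ one instead integrates by parts in $s$ via $\partial_s e^{is a_i}=i a_i e^{is a_i}$, keeping careful track of the degeneracies of $a_i$ and of $\gai$ there. The whole point is to arrange these integrations by parts together with the outer $\int_\infty^t ds$ so that each borderline contribution produces exactly $\langle t\rangle^{-1}$ rather than $\langle t\rangle^{-1}\log t$, which is achieved by working in a norm that measures two weighted $\xi$-derivatives of the profiles. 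I expect this bilinear estimate to be the main obstacle of the paper.

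\textbf{Time-localized iteration and its solvability.} Because the system is quasilinear, Strichartz estimates lose derivatives and only energy estimates are admissible, but a direct Picard iteration on $(\bar u,\bar\vv)$ does not close. Following Step 4, I would split $\tilde G_i$ into its quasilinear self-interaction $\tilde G_{i1}(\bar u,\cdot)+\tilde G_{i2}(\bar\vv,\cdot)$ and a remainder $\tilde G_{i3}$, and define $(\bar u^{(n)},\bar\vv^{(n)})$ by $(\square+1)\bar u^{(n)}=\chi_n(t)\bigl(\tilde G_{11}(\bar u^{(n)},\cdot)+\tilde G_{12}(\bar\vv^{(n)},\cdot)\bigr)+\tilde G_{13}(\bar u^{(n-1)},\bar\vv^{(n-1)},h,\Phi)$ and analogously for $\bar\vv^{(n)}$, with $\chi_n(t)=\chi(2^{-n}t)$. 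For $t\ge 2^{n+1}$ this is linear in the $n$-th iterate with source depending only on the $(n{-}1)$-th, so it is solved by the Duhamel formula from $t=\infty$; then one solves the full equation \emph{backwards} on $0\le t\le 2^{n+1}$ from the data produced at $t=2^n$. A crude quasilinear energy estimate — treating $\chi_n\tilde G_{i1},\chi_n\tilde G_{i2}$ as small time-dependent perturbations of the coefficients of $\square+1$ — then gives $A_n(t)\le K_n\epsilon\langle t\rangle^{-1}$, with $K_n$ depending on $n$ only through the length $2^{n+1}$ of the backward solve.

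\textbf{Bootstrap, uniformity, and contraction.} The last step removes the $n$-dependence. Assuming inductively $A_{n-1}(t)\le\epsilon\langle t\rangle^{-1}$, I would insert the decay of $h$, $\Phi$ and of $(\bar u^{(n-1)},\bar\vv^{(n-1)})$ into the Duhamel representation of $(\bar u^{(n)},\bar\vv^{(n)})$ and run an infinite-time Gronwall argument (the lemma referred to in Step 6), which converts the integrable $\langle s\rangle^{-2}$ decay of the quadratic source into the sharp $\langle t\rangle^{-1}$ decay of the solution with an \emph{absolute} constant; choosing $\epsilon_0$ small then closes $A_n(t)\le\epsilon\langle t\rangle^{-1}$ for all $n$. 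With these uniform bounds I would estimate the consecutive differences $(\bar u^{(n)}-\bar u^{(n-1)},\bar\vv^{(n)}-\bar\vv^{(n-1)})$ and show that they contract in a weaker norm carrying only $\langle t\rangle^{-1/2}$ decay (one power is lost both in the difference estimate of the quasilinear terms and in the mismatch $\chi_n-\chi_{n-1}$), which is still enough to extract a limit $(\bar u,\bar\vv)$; lower semicontinuity of the norms then restores the sharp $\langle t\rangle^{-1}$ bound for the limit. Setting $z_1=\bar u+\Phi_1$, $z_2=\bar\vv+\Phi_2$ yields a solution of \eqref{eq_main} with $(z_1,z_2)\in H^m$ and $\sup_{t\ge0}\langle t\rangle(\|z_1(t)\|_{H^m}+\|z_2(t)\|_{H^m})\lsm\epsilon$, and uniqueness follows from the same contraction estimate. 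The hardest point is the loss-free $H^m$ decay of $\Phi$ in Step 3, with the uniform-in-$n$ bootstrap of Step 6 a close second.
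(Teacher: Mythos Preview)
Your proposal is correct and follows essentially the same six-step scheme as the paper: the decomposition $u=h_1+\Phi_1+\bar u$, the sharp $H^m$ bilinear phase estimate for $\Phi$, the $\chi_n$-localized quasilinear iteration solved forward from $t=\infty$ and then backward on $[0,2^{n+1}]$, the infinite-time Gronwall bootstrap, and the contraction in a $\langle t\rangle^{-1/2}$-weighted norm all match the paper's argument. Two minor corrections: the backward solve starts from data at $t=2^{n+1}$ (not $2^n$), and there is no actual resonant set $\{a_i=0\}$ to worry about---the paper shows $|a_i(\xi,\eta)|\gtrsim\langle|\xi|+|\eta|\rangle^{-1}$ for all four phases, so the $s$-integration by parts is unobstructed and the delicate work is entirely in the $\eta$-stationary points and the $|\xi|\to 0$ regime.
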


\begin{rem}
Certainly the global solution for \eqref{eq_main} is also a global solution for
\eqref{e1}. Our constructed solutions scatters in the $H^m$ space at the optimal speed $1/t$, i.e.
\begin{align*}
\Bigl\|
\begin{pmatrix}
u(t) \\
\vv(t)
\end{pmatrix}
- \cos t \jnab
\begin{pmatrix}
f_1 \\
f_2
\end{pmatrix}
- \frac {\sin t \jnab}{\jnab}
\begin{pmatrix}
g_1 \\
g_2
\end{pmatrix}
\Bigr\|_{H^m} \lesssim \frac 1 {\langle t \rangle }, \quad \forall\, t\ge 0.
\end{align*}
Our method of proof also works for the 3D case. In that case we can obtain optimal decay $t^{-3/2}$.
\end{rem}
\begin{rem}
To simplify the presentation, we do not make attempt to optimize the decay assumption \eqref{106pm}
on the final data $(f_1,g_1,f_2,g_2)$. More refined estimates can weaken the assumption \eqref{106pm}
significantly. However we shall not pursue this matter here.
\end{rem}

\section{Reduction of the problem and the iteration scheme} \label{sec_red}
Define the linear solutions
\begin{align}
h_1(t) &= \cos t \jnab f_1 + \frac {\sin t \jnab} {\jnab} g_1, \label{eha}\\
h_2(t) &= \cos t \jnab f_2 + \frac {\sin t \jnab} {\jnab} g_2. \label{ehb}
\end{align}
Let
\begin{align*}
u^{(1)} = u-h_1, \\
\vv^{(1)}= \vv - h_2.
\end{align*}
For $(u^{(1)}, \vv^{(1)})$ we have the system
\begin{align}
(\square+1) u^{(1)} & =
\nabla \cdot \Bigl( (\vv \cdot \nabla)(\vv^{(1)} + h_2)
+ u \nabla (u^{(1)} +h_1) \Bigr) \notag\\
&\quad -\partial_t \Bigl(
(\vv\cdot \nabla)(u^{(1)} +h_1) + u \nabla \cdot (\vv^{(1)} +h_2) \Bigr)
\label{eq845a}
\end{align}
\begin{align}
(\square+1) \vv^{(1)}
&= \nabla \Bigl( (\vv \cdot \nabla)(u^{(1)}+h_1)
+ u \nabla \cdot (\vv^{(1)} +h_2) \Bigr) \notag \\
& \quad
-\partial_t \Bigl(
(\vv\cdot \nabla)(\vv^{(1)}+h_2)
+ u \nabla ( u^{(1)} +h_1) \Bigr) \notag \\
& \qquad - \nabla \Delta^{-1} \nabla ( u \vv). \label{eq845b}
\end{align}
The RHS of \eqref{eq845a}--\eqref{eq845b} still contains quadratic source
terms due to the self interaction of the linear solution
$(h_1,h_2)$. To remove resonances created by these sources, we further
define
\begin{align}
\Phi_1 &
= \int_\infty^t \frac{\sin (t-s) \jnab} {\jnab}
G_1(h_1,h_2)(s) ds, \label{ePhia}\\
\Phi_2 &
= \int_\infty^t \frac{ \sin (t-s) \jnab} {\jnab}
G_2(h_1,h_2)(s)ds. \label{ePhib}
\end{align}
It is clear that
\begin{align*}
(\square+1) \Phi_1 & =G_1(h_1,h_2), \\
(\square+1) \Phi_2 & = G_2(h_1,h_2).
\end{align*}
Also we have the estimate
\begin{align*}
\| \Phi_1(t) \|_{H^k}
+\| (\Phi_1(t))^\prime \|_{H^k}
+\| \Phi_2 (t) \|_{H^k}
+\| ( \Phi_2(t) )^\prime \|_{H^k} \lsm \frac {\epsilon^2} {\langle t \rangle}.
\end{align*}
We then introduce one more change of variables and define
\begin{align*}
\bar u &= u^{(1)} - \Phi_1 = u-h_1-\Phi_1,\\
\bar \vv & = \vv^{(1)} -\Phi_2=\vv - h_2-\Phi_2.
\end{align*}
For the new variables  $(\bar u, \bar \vv)$, we have the system

\begin{align}
(\square+1) \bar u & =
\nabla \cdot \Bigl(  (\vv \cdot \nabla) (\bar \vv + \Phi_2)
+ u \nabla (\bar u + \Phi_1) \Bigr) \notag \\
& \quad -\partial_t \Bigl(
(\vv \cdot \nabla) (\bar u + \Phi_1)
+u \nabla \cdot (\bar \vv + \Phi_2) \Bigr) \notag \\
& \quad + \nabla \cdot \Bigl(
\bigl( (\bar \vv + \Phi_2) \cdot \nabla \bigr) h_2
+ (\bar u + \Phi_1) \nabla h_1 \Bigr) \notag \\
& \quad - \partial_t
\Bigl(
\bigl( (\bar \vv + \Phi_2) \cdot \nabla \bigr) h_1
+ (\bar u +\Phi_1) \nabla \cdot h_2 \Bigr), \label{eq9a} \\
(\square+1) \bar \vv &
= \nabla \Bigl(
(\vv \cdot \nabla) ( \bar u + \Phi_1)
+ u \nabla \cdot (\bar \vv + \Phi_2) \Bigr) \notag \\
& \quad
-\partial_t \Bigl(
(\vv \cdot \nabla) (\bar \vv + \Phi_2)
+ u \nabla ( \bar u + \Phi_1) \Bigr) \notag \\
& \quad
+ \nabla \Bigl(
(\bar \vv + \Phi_2) \cdot \nabla h_1
+ (\bar u + \Phi_1) \nabla \cdot h_2 \Bigr) \notag \\
& \quad -\partial_t
\Bigl( \bigl( (\bar v+ \Phi_2) \cdot \nabla \bigr) h_2
+ (\bar u + \Phi_1) \nabla h_1 \Bigr) \notag \\
& \qquad - \nabla \Delta^{-1} \nabla ( u \vv), \label{eq9b}
\end{align}
and
\begin{align}
u & = \bar u + \Phi_1 +h_1, \notag \\
\vv & = \bar \vv + \Phi_2 +h_2. \label{eq9c}
\end{align}
The system \eqref{eq9a}--\eqref{eq9c} in the variables
$(\bar u, \bar \vv)$ no longer contains resonant terms.
We will construct a smooth global solution to this system.

Let $\chi \in C_c^\infty(\R)$ be such that
$\chi(t) =1$ for $0\le t \le 1$ and $\chi(t)=0$ for
$t\ge 2$. For integers $n\ge 0$, define
\begin{align*}
\chi_n(t) = \chi(2^{-n} t).
\end{align*}
It is clear that $\chi_n$ is localized to the regime $t \lsm 2^n$.

Let $\bar u^{(0)}=0$, $\bar \vv^{(0)}=0$. For $n\ge 1$ we inductively define

\begin{align}
(\square +1) \bar u^{(n)} & = \chi_n(t) \nabla \cdot
\Bigl( (\vv^{(n-1)} \cdot \nabla)(\bar \vv^{(n)}) + u^{(n-1)} \nabla \bar u^{(n)} \Bigr) \notag \\
& \quad - \chi_n(t) \partial_t \Bigl( (\vv^{(n-1)} \cdot \nabla) (\bar u^{(n)} ) + u^{(n-1)} \nabla \cdot ( \bar \vv^{(n)}) \Bigr) \notag \\
& \quad + \nabla \cdot \Bigl( (\vv^{(n-1)} \cdot \nabla) \Phi_2 + u^{(n-1)} \nabla \Phi_1 \Bigr) \notag \\
& \quad - \partial_t \Bigl( (\vv^{(n-1)} \cdot \nabla) \Phi_1 + u^{(n-1)} \nabla \cdot \Phi_2 \Bigr) \notag \\
& \quad + \nabla \cdot \Bigl(
\bigl( (\bar \vv^{(n-1)} + \Phi_2) \cdot \nabla \bigr) h_2 + (\bar u^{(n-1)} + \Phi_1) \nabla h_1 \Bigr) \notag \\
& \quad - \partial_t
\Bigl( \bigl( (\bar v^{(n-1)} + \Phi_2 ) \cdot \nabla \bigr) h_1 + ( \bar u^{(n-1)} + \Phi_1 ) \nabla \cdot h_2 \Bigr), \label{eq10a}
\end{align}
and
\begin{align}
(\square +1) \bar \vv^{(n)} & = \chi_n(t) \nabla \Bigl( (\vv^{(n-1)} \cdot \nabla)(\bar u^{(n)}) + u^{(n-1)} \nabla \cdot \bar \vv^{(n)} \Bigr) \notag \\
& \quad - \chi_n(t) \partial_t
\Bigl( (\vv^{(n-1)} \cdot \nabla) ( \bar \vv^{(n)}) + u^{(n-1)} \nabla ( \bar u^{(n)} ) \Bigr) \notag \\
& \quad + \nabla \Bigl( (\vv^{(n-1)} \cdot \nabla) \Phi_1 + u^{(n-1)} \nabla \cdot \Phi_2 \Bigr) \notag \\
& \quad -\partial_t \Bigl( (\vv^{(n-1)} \cdot \nabla ) \Phi_2 + u^{(n-1)} \nabla \Phi_1 \Bigr) \notag \\
& \quad + \nabla \Bigl( (\bar \vv^{(n-1)} + \Phi_2) \cdot \nabla h_1 + ( \bar u^{(n-1)} + \Phi_1) \nabla \cdot h_2 \Bigr) \notag \\
& \quad - \partial_t \Bigl(
\bigl( (\bar \vv^{(n-1)} + \Phi_2) \cdot \nabla \bigr) h_2 + (\bar u^{(n-1)} + \Phi_1) \nabla h_1 \Bigr) \notag \\
& \qquad - \nabla \Delta^{-1} \nabla \cdot( u^{(n-1)} \vv^{(n-1)} ). \label{eq10b}
\end{align}

Here
\begin{align*}
u^{(n-1)} & = \bar u^{(n-1)} + h_1+\Phi_1, \\
\vv^{(n-1)} & = \bar \vv^{(n-1)} +h_2+\Phi_2.
\end{align*}

In Section \ref{sec_solv} we will study the system \eqref{eq10a}--\eqref{eq10b} and its solvability.

\section{sharp phase estimates for $\Phi_1$, $\Phi_2$} \label{sec_phi}
In this section we estimate $(\Phi_1,\Phi_2)$. By \eqref{ePhia}--\eqref{ePhib},
\eqref{eha}--\eqref{ehb} and \eqref{e1a}, we have
\begin{align*}
\Phi_1 & =\int_\infty^t \frac {\sin (t-s) \jnab} {\jnab} \Bigl( \nabla \cdot \left( (h_2(s) \cdot \nabla ) h_2 (s)
+ h_1(s) \nabla h_1(s) \right) \\
& \qquad -\partial_s \left( h_2(s) \cdot \nabla h_1(s) + h_1(s) \nabla \cdot h_2(s) \right) \Bigr) ds,
\end{align*}
where we recall
\begin{align*}
h_1(s) & = \cos s \jnab f_1 + \frac {\sin s \jnab} {\jnab} g_1,\\
h_2(s) & = \cos s \jnab f_2 + \frac { \sin s \jnab } {\jnab} g_2.
\end{align*}
By Fourier transform, one can write down the explicit form of $\hat \Phi_1$ in terms of
the sum of several bilinear integrals involving the functions $(f_1,g_1,f_2,g_2)$. There
are altogether sixteen such terms. To simplify the notations, we shall only write
\begin{align*}
\hat \Phi_1(t, \xi) \sim e^{\pm it \langle \xi \rangle} \cdot \langle \xi \rangle^{-1} \int_\infty^t \int_{\R^2}
e^{-is ( \langle \xi \rangle \pm \langle \xi -\eta \rangle \pm \langle \eta \rangle ) }
\hat f(\xi -\eta) \hat g(\eta) d\eta ds,
\end{align*}
where
\begin{align*}
\hat f (\xi) &= \left( C_1 \xi^{\alpha_1} + C_2 \langle \xi \rangle^{r_1} \right) \tilde f (\xi),\\
\hat g (\xi) & = \left( C_3 \xi^{\alpha_2} + C_4 \langle \xi \rangle^{r_2} \right) \tilde g (\xi),
\end{align*}
and $|\alpha_1|\le 2$, $|\alpha_2|\le 2$, $\alpha_1$, $\alpha_2$ are multi-indices, also
$r_1=\pm 1$, $r_2 = \pm 1$, $C_1,\cdots, C_4$ are constants, $\tilde f$, $\tilde g$ can be any
one choice from $(\hat f_1, \hat g_1, \hat f_2,\hat g_2)$.

Let $m\ge 0$ be any integer. We compute
\begin{align*}
 & \| \langle t \rangle \Phi_1(t) \|_{H^{m+2}} + \| \langle t \rangle \partial_t \Phi_1 (t) \|_{H^{m+2}} \\
\lesssim  & \| \langle \xi \rangle^{m+2} \jt \int_\infty^t \int_{\R^2}
e^{is(\langle \xi \rangle \pm \langle \eta \rangle \pm \langle \xi-\eta \rangle )}
\hat f(\eta) \hat g(\xi-\eta) d\eta ds \|_{L_\xi^2} \\
\lesssim & \| \langle \xi \rangle^{m+2} \jt \int_\infty^t \int_{\R^2}
e^{is(\langle \xi \rangle \pm \langle \frac {\xi} 2+ \eta \rangle \pm \langle \frac{\xi}2-\eta \rangle )}
\hat f(\frac{\xi}2+\eta) \hat g(\frac{\xi}2-\eta) d\eta ds \|_{L_\xi^2},
\end{align*}
where in the last step we made a change of variable $\eta \to \frac {\xi} 2 + \eta$.

The main result of this section is the following

\begin{thm} \label{thm_51}
For any integer $m\ge 0$, we have the estimate
\begin{align*}
  & \| \langle t \rangle \Phi_1(t) \|_{H^{m+2}} + \| \langle t \rangle \partial_t \Phi_1(t) \|_{H^{m+2}} \\
  \le & C \sup_{\xi \in \mathbb R^2} \langle \xi \rangle^{m+24}
  \Bigl( \sum_{\substack{|\alpha|\le 2 \\ i=1,2}} | \partial_{\xi}^\alpha \hat f_i(\xi) |
  +  \sum_{\substack{|\alpha|\le 2 \\ i=1,2}} | \partial_{\xi}^\alpha \hat g_i(\xi) | \Bigr),
\end{align*}
where $C$ is an absolute constant. $(\hat f_1, \hat g_1, \hat f_2, \hat g_2)$ are Fourier transforms of
the final data $(f_1,g_1, f_2,g_2)$ respectively. Same estimates also hold for $\Phi_2$.
\end{thm}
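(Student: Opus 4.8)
The plan is to estimate the generic bilinear integral
\[
I(t,\xi)=\langle t\rangle\langle\xi\rangle^{m+2}\int_\infty^t\!\!\int_{\R^2}
e^{is\phi_\pm(\xi,\eta)}\hat f\!\left(\tfrac\xi2+\eta\right)\hat g\!\left(\tfrac\xi2-\eta\right)d\eta\,ds,
\qquad \phi_\pm=\langle\xi\rangle\pm\langle\tfrac\xi2+\eta\rangle\pm\langle\tfrac\xi2-\eta\rangle,
\]
uniformly in $t$, and then sum the finitely many ($\le 16$) such terms. First I would do the $s$-integration by parts in the oscillatory factor $e^{is\phi_\pm}$: away from the zero set of the phase $\phi_\pm$ one gains a factor $1/|\phi_\pm|$ but loses the endpoint contributions at $s=t$ and $s=\infty$. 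The contribution at $s=\infty$ vanishes because of the decaying boundary condition built into the definition \eqref{ePhia}; the contribution at $s=t$ produces a term without the $\int ds$, which is then of the form $\langle t\rangle\langle\xi\rangle^{m+2}\int e^{it\phi_\pm}\hat f\hat g/\phi_\pm\,d\eta$ and must be controlled by the stationary-phase/dispersive decay of a single oscillatory integral in $\eta$ (giving the $1/\langle t\rangle$ that cancels the prefactor). The remaining double integral, after the $\partial_s$ hits $1/\phi_\pm$ and the amplitudes, is handled by the same two mechanisms iterated.

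The key structural input is the analysis of the phase $\phi_\pm(\xi,\eta)$. The resonant set is where $\phi_\pm=0$ and simultaneously $\nabla_\eta\phi_\pm=0$; one checks by direct computation (using $\nabla_\eta\langle\tfrac\xi2\pm\eta\rangle=\pm(\tfrac\xi2\pm\eta)/\langle\tfrac\xi2\pm\eta\rangle$) that genuine space-time resonances occur only at $\xi=0$ (the plasma-frequency Klein--Gordon dispersion relation $\tau=\langle\xi\rangle$ has no nontrivial resonances away from the origin, unlike the pure-wave case), and that near $\xi=0$ the phase degenerates like a power of $|\xi|$. This is exactly the source of the logarithmic losses in \cite{GNT06}; the point of the theorem — the reason one gets clean $1/\langle t\rangle$ with no $\log$ and no $\eps$ — is that one splits into $|\xi|\lesssim 1$ and $|\xi|\gtrsim1$, and on the low-frequency piece one does \emph{not} integrate by parts in $s$ but instead estimates crudely, paying $\langle\xi\rangle^{m+24}$ worth of derivatives on the data to absorb the degeneracy, while on $|\xi|\gtrsim1$ one has $|\phi_\pm|\gtrsim 1$ plus $|\nabla_\eta\phi_\pm|$ bounded below on the relevant region, so non-stationary phase in $\eta$ and integration by parts in $s$ both work losslessly. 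Weighted Cauchy--Schwarz in $\xi$, together with the pointwise bound on $\partial_\xi^\alpha\hat f_i,\partial_\xi^\alpha\hat g_i$ with the weight $\langle\xi\rangle^{m+24}$ from \eqref{106pm}, converts the pointwise-in-$\xi$ estimates into the claimed $H^{m+2}$ bound; the two extra derivatives ($m+2$ rather than $m$) and the large exponent $24$ are budget for the $\eta$- and $\xi$-derivatives spent in the two integrations by parts and in the stationary-phase expansion, plus the $\langle\xi-\eta\rangle$ losses from $G_1,G_2$ containing up to two derivatives and a Riesz factor $\nabla\Delta^{-1}\nabla$ (which is bounded except near $\eta=0$, handled again by the low-frequency crude estimate).

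For the $\partial_t\Phi_1$ piece, note $\partial_t\Phi_1=\jnab^{-1}\int_\infty^t\cos((t-s)\jnab)G_1(h_1,h_2)(s)\,ds$ after differentiating \eqref{ePhia}; the $\jnab^{-1}$ only helps, and the cosine is handled identically to the sine, so no new ideas are needed there. I would also record the elementary decay estimates $\|h_i(t)\|_{L^\infty}\lesssim\langle t\rangle^{-1}\|\cdot\|$ (standard 2D Klein--Gordon $L^1\to L^\infty$) only as sanity checks; the proof of Theorem \ref{thm_51} itself is purely Fourier-side and does not route through them.

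I expect the main obstacle to be the bookkeeping near $\xi=0$: one must show that after the $s$-integration by parts the apparent singularity $1/\phi_\pm$ — which blows up as $\xi\to0$ on part of the $\eta$-domain — is integrable in $\eta$ with a bound that, after multiplication by $\langle t\rangle$, is still $O(1)$ uniformly in $t$, and that this survives the two derivatives needed for the $H^{m+2}$ norm. Concretely the delicate case is the combination of signs for which $\phi_\pm$ has a genuine degenerate critical point at the origin; there one cannot afford even one integration by parts in $s$ and must instead exploit that for $t\lesssim 1$ the $\langle t\rangle$ prefactor is harmless while for $t\gtrsim1$ the oscillation in $\eta$ (stationary phase away from $\eta=\pm\tfrac\xi2$ and from $\eta=0$) still furnishes $t^{-1}$ decay \emph{provided} the $\xi$-region is further dyadically decomposed and the Hessian of $\phi_\pm$ in $\eta$ is quantitatively nondegenerate on each piece; tracking the $\xi$-power losses from this dyadic decomposition against the gain from the data weight $\langle\xi\rangle^{m+24}$ is where all the technical work lies. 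Everything else — Minkowski's inequality to move $L^2_\xi$ inside the $\eta$ and $s$ integrals, Leibniz for the $\langle\xi\rangle^{m+2}$ and $\partial_\xi^\alpha$ derivatives, boundedness of $\nabla\Delta^{-1}\nabla$ away from zero frequency — is routine.
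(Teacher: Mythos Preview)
Your overall framework (integrate by parts in $s$, then do a stationary-phase / non-stationary-phase analysis in $\eta$) matches the paper's, but your description of the phase structure is wrong in ways that would prevent the argument from closing.

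First, the claim that $1/\phi_\pm$ ``blows up as $\xi\to 0$'' is false: for \emph{every} sign combination one has $|\phi_\pm(\xi,\eta)|\gtrsim \langle|\xi|+|\eta|\rangle^{-1}$ (this is the elementary inequality $\sqrt{1+A^2}+\sqrt{1+B^2}-\sqrt{1+(A+B)^2}\gtrsim(\sqrt{1+A^2}+\sqrt{1+B^2})^{-1}$), so the $s$-integration by parts is \emph{always} available and produces a bounded factor $1/a_i$. There is no region where ``one cannot afford even one integration by parts in $s$.'' What \emph{does} degenerate is $\nabla_\eta\phi_\pm$, and the degeneracy is not concentrated at $\xi=0$: for the sign choices $\langle\xi\rangle\mp\langle\xi/2+\eta\rangle\mp\langle\xi/2-\eta\rangle$ one has $\nabla_\eta\phi=0$ at $\eta=0$ for \emph{every} $\xi$, while for the mixed signs $\langle\xi\rangle\mp\langle\xi/2+\eta\rangle\pm\langle\xi/2-\eta\rangle$ one has $|\nabla_\eta\phi|\gtrsim|\xi|\langle|\xi|+|\eta|\rangle^{-3}$, which vanishes only as $|\xi|\to 0$. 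So your proposed dichotomy $|\xi|\lesssim 1$ versus $|\xi|\gtrsim 1$ does not separate anything: on $|\xi|\gtrsim 1$ the first type of phase still has a stationary point in $\eta$, and your assertion ``$|\phi_\pm|\gtrsim 1$ plus $|\nabla_\eta\phi_\pm|$ bounded below'' there is simply false.

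The paper's decomposition is instead dictated by which of the two degeneracies is present. After the $s$-integration (always done) one is left with $\langle t\rangle\langle\xi\rangle^{m+2}\int e^{it a_i}\,h/a_i\,d\eta$. For the phases with $\nabla_\eta a_i=0$ at $\eta=0$, the paper splits the $\eta$-integral at the scale $|\eta|\sim|\xi|/\langle\xi\rangle^{2}$: away from this neighbourhood one integrates by parts once in $\eta$; inside it one subtracts the value of the amplitude at $\eta=0$ and does a genuine stationary-phase computation (a radial change of variables $r=\rho^2$ turns the Hessian degeneracy into a first-order nonvanishing derivative, with explicit $\langle\xi\rangle$-dependence). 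For the mixed-sign phases, the split is at $|\xi|=1/t$: on $|\xi|\le 1/t$ a crude bound on the $\eta$-integral suffices because the $L^2_\xi$-measure of that ball already furnishes $t^{-1}$; on $|\xi|\ge 1/t$ two integrations by parts in $\eta$ give $t^{-2}|\xi|^{-2}$, and $\|\,|\xi|^{-2}\|_{L^2(|\xi|\ge 1/t)}\sim t$ restores $t^{-1}$. Your ``crude estimate without $s$-IBP on low frequency'' would not produce any $t$-decay at all and should be replaced by this $|\xi|\lessgtr 1/t$ mechanism.
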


Theorem \ref{thm_51} will follow from the following

\begin{prop} \label{prop_51}
For any integer $m\ge 0$, we have
\begin{align}
&\| \langle \xi \rangle^{m+2} \jt \int_\infty^t \int_{\R^2}
e^{is(\langle \xi \rangle \pm \langle \frac {\xi} 2+ \eta \rangle \pm \langle \frac{\xi}2-\eta \rangle )}
h(\xi, \eta) d\eta ds \|_{L_\xi^2}  \notag \\
\lsm  &
 \sup_{\tilde \xi, \tilde \eta \in \R^2}
\langle |\tilde \xi| + |\tilde \eta|\rangle^{m+21}
( |h(\tilde \xi,\tilde \eta)| + | (\partial_{\tilde \eta} h)(\tilde \xi, \tilde \eta) |
+ |(\partial^2_{\tilde \eta} h)(\tilde \xi,\tilde \eta) |) \label{e_tm_128}
\end{align}

\end{prop}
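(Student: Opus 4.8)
The plan is to estimate the bilinear oscillatory integral $I(t,\xi) = \jt \int_\infty^t \int_{\R^2} e^{is\,\Omega_\pm(\xi,\eta)} h(\xi,\eta)\,d\eta\,ds$ by exploiting stationary/non-stationary phase in $\eta$ for the inner $\eta$-integral, then integrating in $s$, and finally taking the $L^2_\xi$ norm weighted by $\langle\xi\rangle^{m+2}$. Here the phase is $\Omega_\pm(\xi,\eta) = \langle \xi\rangle \pm \langle \tfrac\xi2+\eta\rangle \pm \langle \tfrac\xi2-\eta\rangle$. The first step is to split into cases according to the sign combination. The genuinely resonant case is the one where $\Omega$ can vanish together with $\nabla_\eta\Omega$; the key algebraic fact (this is where the Klein-Gordon mass helps) is that the ``space-time resonant set'' where $\Omega=0$ and $\nabla_\eta\Omega=0$ simultaneously is small — in fact for the Klein-Gordon dispersion $\langle\cdot\rangle$ one checks $\nabla_\eta\Omega_\pm = \pm\frac{\tfrac\xi2+\eta}{\langle\tfrac\xi2+\eta\rangle} \mp \frac{\tfrac\xi2-\eta}{\langle\tfrac\xi2-\eta\rangle}$, which vanishes only when $\tfrac\xi2+\eta = \pm(\tfrac\xi2-\eta)$, i.e.\ on a low-dimensional set, and on that set $|\Omega|$ is bounded below (away from $\xi=0$). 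So one can integrate by parts in $\eta$ freely away from a neighborhood of that set.

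The main step is the following normal-form / integration-by-parts in time. Write $e^{is\Omega} = \frac{1}{i\Omega}\partial_s e^{is\Omega}$ and integrate by parts in $s$ on the region where $|\Omega| \gtrsim \langle\xi\rangle^{-1}$ (say); this gains a factor $\Omega^{-1}$ but produces only boundary terms at $s=t$ and $s=\infty$ (the latter vanishes by the decaying condition built into $\Phi$), so no new $s$-integral is generated and the $\jt$ in front is absorbed against... — wait, that is not quite enough, since $\Omega^{-1}$ alone does not beat the $\jt$ growth. The correct device is the space-time resonance method: one decomposes $h\,d\eta\,ds$ using a partition adapted to whether $|\Omega| \geq \delta$ or $|\nabla_\eta\Omega| \geq \delta$. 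On $\{|\Omega|\geq\delta\}$ integrate by parts in $s$ (one power of $\Omega^{-1}$, boundary terms only). On $\{|\nabla_\eta\Omega|\geq\delta\}$ integrate by parts in $\eta$ repeatedly: each such integration by parts costs a derivative hitting $h$ (hence the appearance of $\partial_\eta h$, $\partial^2_\eta h$ on the right side — two integrations by parts suffice to produce an $s$-integrable $\langle s\rangle^{-2}$-type factor after using $\langle\eta\rangle$-decay of the symbols), and the remaining $s$-integral converges absolutely, killing the $\jt$. The overlap region $\{|\Omega|\lesssim\delta,\ |\nabla_\eta\Omega|\lesssim\delta\}$ is, by the resonance analysis above, contained in a set of measure $O(\delta)$ in $\eta$ (for fixed $\xi$ away from the origin), so it is estimated trivially by $\delta \cdot \|h\|_{L^\infty}$; optimizing $\delta$ in terms of $\langle s\rangle$ recovers the $1/\langle s\rangle$ gain.

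The low-frequency region $\xi\approx 0$ must be handled separately: there $\langle\xi\rangle\approx 1$ so there is no weight loss, and $|\Omega|$ degenerates, but the $\eta$-integral is over the full Klein-Gordon phase where $\pm\langle\tfrac\xi2+\eta\rangle\pm\langle\tfrac\xi2-\eta\rangle$ still has non-degenerate Hessian in $\eta$ for the $(+,-)$-type combination, so standard stationary phase gives $\langle s\rangle^{-1}$ decay directly, and the $++$/$--$ combinations are non-resonant ($|\Omega|\gtrsim 1$) and handled by time integration by parts. Assembling: after these decompositions one has, for each piece, a pointwise-in-$(t,\xi)$ bound of the form $|I(t,\xi)| \lesssim \langle\xi\rangle^{C}\sup_{|\alpha|\le 2}\|\langle\eta\rangle^{N}\partial_\eta^\alpha h(\xi,\cdot)\|_{L^\infty_\eta}$ with $C + (\text{weight }m+2) \le m+21$ after bookkeeping the derivative costs and the $\langle\eta\rangle^{\pm1}$ symbol weights (this is where the exponent $21$ comes from, and explains why the theorem statement then needs $m+24$ — three more derivatives of $f,g$ get converted into the $\langle\eta\rangle$-decay and $\partial_\eta$-regularity of $h$). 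Taking $L^2_\xi$ over the compactly-supported-in-the-relevant-region $\xi$, or using an extra $\langle\xi\rangle^{-N}$ that is available from the symbols when $\xi$ is large, closes the $L^2_\xi$ bound. The hard part is the careful handling of the overlap (true space-time resonance) region and tracking that only two $\eta$-derivatives of $h$ are ever needed; the rest is bookkeeping of weights.
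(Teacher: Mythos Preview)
Your outline has the right ingredients (integration by parts in $s$ where the phase is nonzero, integration by parts in $\eta$ where its gradient is nonzero), but there is a genuine gap in how the $\langle t\rangle$ weight is killed. You noticed the issue yourself (``wait, that is not quite enough'') and then switched to an $s$-dependent decomposition, but that fix does not close either.

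Concretely: for the sign combinations with a genuine $\eta$-critical point (in the paper's notation $a_1,a_2$, i.e.\ the $(- -)$ and $(++)$ cases, where $\nabla_\eta a_i$ vanishes at $\eta=0$), your plan is to integrate by parts twice in $\eta$ on $\{|\nabla_\eta\Omega|\gtrsim\delta\}$ and bound the near-critical region $\{|\eta|\lesssim\delta\}$ by its volume. In $\R^2$ the volume is $\delta^2$, and two integrations by parts on the complement produce $s^{-2}|\nabla_\eta\Omega|^{-2}\sim s^{-2}|\eta|^{-2}$, whose $\eta$-integral over $\{|\eta|\gtrsim\delta\}$ is $\sim\log(1/\delta)$. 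Optimizing $\delta=\delta(s)$ you are left with at best $s^{-2}\log s$ in the integrand, hence $\jt\int_t^\infty s^{-2}\log s\,ds\sim\log t$, which is unbounded. Your separate low-frequency claim that ``standard stationary phase gives $\langle s\rangle^{-1}$'' for the $(+,-)$ combination is worse: $\int_t^\infty s^{-1}\,ds$ diverges outright. And on your region $\{|\Omega|\geq\delta\}$, one time-integration by parts produces only the boundary term $\jt\int e^{it\Omega}\Omega^{-1}h\,d\eta$, which still carries $\jt$; you never say how to remove it.

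The missing idea, and what the paper does, is to exploit the Klein-Gordon-specific fact that $|\Omega|$ is bounded below \emph{everywhere}: $|a_i(\xi,\eta)|\gtrsim\langle|\xi|+|\eta|\rangle^{-1}$ for all four sign choices (Lemma~\ref{lem_2} / Lemma~\ref{lem_3}). So one first integrates by parts in $s$ \emph{globally} (Lemma~\ref{lem_102}), reducing the double integral to the single boundary integral $\jt\int_{\R^2}\frac{e^{ita_i(\xi,\eta)}}{a_i(\xi,\eta)}h(\xi,\eta)\,d\eta$. The $\jt$ is then cancelled by a sharp oscillatory-integral estimate in $\eta$ \emph{at the fixed time $t$}: for $i=1,2$ one splits off a small ball around $\eta=0$, performs an explicit stationary-phase computation there after a polar change of variables $r=\rho^2$ that renders the radial phase nondegenerate (Lemma~\ref{lem_10}, yielding exactly $1/\langle t\rangle$), and handles the complement by one $\eta$-integration by parts (Lemma~\ref{lem_12}); for $i=3,4$ there is no $\eta$-critical point when $\xi\neq 0$ since $|\nabla_\eta a_i|\gtrsim|\xi|\langle|\xi|+|\eta|\rangle^{-3}$, so two $\eta$-integrations by parts give $t^{-2}|\xi|^{-2}$ (Lemma~\ref{lem_27}), which is then placed in $L^2_\xi$ over $\{|\xi|\geq 1/t\}$, with the tiny ball $\{|\xi|<1/t\}$ estimated trivially. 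The two $\eta$-integrations by parts in this last step are precisely why $\partial_\eta^2 h$ appears on the right-hand side.
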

Note that it is enough to prove Proposition \ref{prop_51} for the case $m=0$.
We defer the proof of Proposition \ref{prop_51} to the end of this section.
For now we assume Proposition \ref{prop_51} holds and complete the
\begin{proof}[Proof of Theorem \ref{thm_51}]
It suffices to show the case $m=0$.
By Proposition \ref{prop_51} and the brief computation preceding Theorem \ref{thm_51},
we only need to bound the quantity
\begin{align*}
\sup_{\tilde \xi, \tilde \eta \in \R^2}
\langle |\tilde \xi| + |\tilde \eta|\rangle^{21}
( |h(\tilde \xi,\tilde \eta)| + | (\partial_{\tilde \eta} h)(\tilde \xi, \tilde \eta) |
+ |(\partial^2_{\tilde \eta} h)(\tilde \xi,\tilde \eta) |),
\end{align*}
where $h(\xi,\eta) \sim \hat f(\frac \xi 2 +\eta) \hat g(\frac \xi 2 -\eta)$, and
$\hat f (\xi)$, $\hat g(\xi)$ depends only on the final data $(f_1,g_1,f_2,g_2)$. An
elementary case by case analysis then yields the result.
\end{proof}

The rest of this section is devoted to the proof of Proposition \ref{prop_51}.
We begin by establishing some elementary lemmas.
\begin{lem} \label{lem_1}
 For any $a$, $b\in \R^2$, we have
\begin{align}
 \left| \frac{a+b}{\langle a+b \rangle} + \frac {a-b}{\langle a-b \rangle} \right|
\gtrsim \frac {|a|}{\langle |a| + |b| \rangle^3}. \label{e_lem1_1}
\end{align}
\end{lem}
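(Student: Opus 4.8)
The plan is to reduce the vector inequality to a scalar one along the direction of $a$, then split into regimes according to whether $|b|$ is small or large relative to $\langle|a|\rangle$. First I would write $w := \frac{a+b}{\langle a+b\rangle} + \frac{a-b}{\langle a-b\rangle}$ and observe that it suffices to bound $|w\cdot \hat a|$ from below (where $\hat a = a/|a|$), since $|w| \ge |w\cdot\hat a|$. Write $\alpha := |a+b|$, $\beta := |a-b|$ and decompose $w = \frac{a}{2}\bigl(\frac1{\langle\alpha\rangle}+\frac1{\langle\beta\rangle}\bigr) + \frac{a}{2}\cdot(\text{stuff})\ldots$ — actually cleaner: note $a+b$ and $a-b$ average to $a$, so $w\cdot a = |a|^2\bigl(\frac1{\langle\alpha\rangle}+\frac1{\langle\beta\rangle}\bigr)\cdot\frac12 \cdot 2 + (b\cdot a)\bigl(\frac1{\langle\alpha\rangle}-\frac1{\langle\beta\rangle}\bigr)$. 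The first term is manifestly positive and of size $|a|^2/\langle\alpha\rangle$ when $\langle\alpha\rangle\sim\langle\beta\rangle$; the danger is cancellation when the second term is negative and large. So the genuine content is a lower bound for $\frac{a}{\langle a+b\rangle}+\frac{a}{\langle a-b\rangle}$ corrected by the off-diagonal term, i.e. one must show the two fractions cannot nearly cancel in the $\hat a$-direction.

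The key computational step I would carry out is the mean value / difference estimate for $s\mapsto s/\langle s\rangle$ applied to the vectors $a+b$ and $a-b$: one has the identity
\begin{align*}
\frac{a+b}{\langle a+b\rangle}+\frac{a-b}{\langle a-b\rangle}
= \frac{2a}{\langle a+b\rangle} + (a-b)\Bigl(\frac1{\langle a-b\rangle}-\frac1{\langle a+b\rangle}\Bigr),
\end{align*}
and then estimate $\bigl|\frac1{\langle a-b\rangle}-\frac1{\langle a+b\rangle}\bigr| = \frac{|\langle a+b\rangle^2-\langle a-b\rangle^2|}{\langle a-b\rangle\langle a+b\rangle(\langle a+b\rangle+\langle a-b\rangle)} = \frac{4|a\cdot b|}{\langle a-b\rangle\langle a+b\rangle(\langle a+b\rangle+\langle a-b\rangle)}$. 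Dotting the displayed identity with $\hat a$ gives $w\cdot\hat a = \frac{2|a|}{\langle a+b\rangle} + (\hat a\cdot(a-b))\cdot O\bigl(\frac{|a||b|}{\langle a+b\rangle\langle a-b\rangle\langle a\rangle}\bigr)$, roughly; the point is that the error is $|a|$ times a factor $\lesssim |b|/\langle a-b\rangle$ times $1/\langle a+b\rangle$, which is $\le$ (a small constant)$\cdot \frac{|a|}{\langle a+b\rangle}$ precisely when $|b|\lesssim\langle a-b\rangle$ — and that always holds up to the triangle inequality since $\langle a-b\rangle\gtrsim\langle|b|-|a|\rangle$. This lets me absorb the error into the main term \emph{except} in the transition zone $|b|\approx |a|$, $|b|\gg 1$, where $\langle a-b\rangle$ can be small.

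The main obstacle is therefore the regime $|a-b|$ small with $|a|,|b|$ both large and comparable (the two unit vectors $\frac{a\pm b}{\langle a\pm b\rangle}$ then point in nearly opposite-ish directions and the naive estimate degenerates). In that regime I would argue directly: if $|a-b|\le |a|/10$, say, then $|a+b|\ge |a|$, both are $\gtrsim |a|$, and I instead use the lower bound $w\cdot\hat a \ge \frac{a\cdot(a+b)}{|a|\langle a+b\rangle} - \frac{|a-b|}{\langle a-b\rangle} \ge \frac{|a|^2 + a\cdot b}{|a|\langle a+b\rangle} - 1$; since $a\cdot b = \frac12(|a|^2+|b|^2-|a-b|^2)$ is then $\ge \frac12(|a|^2 - |a|^2/100) \sim |a|^2$, the first term is $\gtrsim |a|^2/\langle a+b\rangle \gtrsim |a|/\langle|a|+|b|\rangle$, and I check this beats the subtracted $1$ using $\langle|a|+|b|\rangle^3$ room to spare (this is where the cubic power is spent — it is wasteful but harmless). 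Collecting the two cases and recalling $\langle a+b\rangle, \langle a-b\rangle \lesssim \langle|a|+|b|\rangle$ yields \eqref{e_lem1_1}. The whole argument is elementary; the only care needed is bookkeeping the transition zone so that no case produces a bound worse than $|a|/\langle|a|+|b|\rangle^3$.
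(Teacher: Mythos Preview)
Your argument has a genuine gap in the ``transition zone'' case. In the regime $|a-b|\le|a|/10$ with $|a|,|b|$ large you write
\[
w\cdot\hat a \;\ge\; \frac{|a|^2+a\cdot b}{|a|\,\langle a+b\rangle}\;-\;1,
\]
and then claim the first term is $\gtrsim |a|^2/\langle a+b\rangle$. That is an algebra slip: the denominator already contains a factor $|a|$, so with $a\cdot b\sim|a|^2$ the first term is only $\sim |a|/\langle a+b\rangle$. Since in this regime $\langle a+b\rangle\sim|a|$, the first term is $\approx 1$, and after subtracting $1$ you are left with nothing (in fact with something slightly negative). Concretely, take $a=(N,0)$ and $b=((1+\varepsilon)N,0)$ with $N$ large and $0<\varepsilon<1/10$: then $|a-b|=\varepsilon N<|a|/10$, yet $\frac{|a-b|}{\langle a-b\rangle}\to 1$ and $\frac{(a+b)\cdot a}{|a|\langle a+b\rangle}\to 1$, so your lower bound tends to $0$ from below while the target $|a|/\langle|a|+|b|\rangle^3\sim N^{-2}$ is strictly positive. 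The true value of $w\cdot\hat a$ in this example is $\sim 1/(\varepsilon N)^2$, coming from the \emph{difference} $1-\tfrac{|a-b|}{\langle a-b\rangle}\sim \tfrac{1}{2|a-b|^2}$, which your crude bound throws away.

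A related weakness already appears in your ``generic'' case: the ratio of the error to the main term $2|a|/\langle a+b\rangle$ is bounded by $2|b|/(\langle a+b\rangle+\langle a-b\rangle)$, and since $|a+b|+|a-b|\ge 2|b|$ this ratio is $\le 1$ but with no gap, so absorption is not automatic. The cubic power $\langle|a|+|b|\rangle^3$ is exactly what quantifies the gap, and your argument never extracts it.

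The paper instead decomposes along $b$ rather than $a$: when $|a|\le|b|$ it writes $a=a_1+a_2$ with $a_1\parallel b$, $a_2\perp b$, and uses that the $a_2$-component of $w$ is manifestly $|a_2|(\langle a+b\rangle^{-1}+\langle a-b\rangle^{-1})$ with no cancellation, while the $b$-direction component reduces to a one-dimensional computation $\sqrt{(\lambda+1)^2|b|^2/A_+}-\sqrt{(\lambda-1)^2|b|^2/A_-}$ whose difference of squares produces exactly a factor $\lambda|b|\sim|a_1|$ over $\langle a+b\rangle^2\langle a-b\rangle$, giving the cubic denominator honestly. If you want to rescue your $\hat a$-projection approach, you would need, in the bad regime, to keep the next term in the expansion of $|a-b|/\langle a-b\rangle$ rather than bounding it by $1$.
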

\begin{proof}
 If $a=0$ or $b=0$ then \eqref{e_lem1_1} holds trivially. Now assume both $a$ and $b$ are nonzero
vectors. Denote the vector on the LHS of \eqref{e_lem1_1} as $X$. We consider several cases.
WOLOG we may assume $a\cdot b\ge 0$.

\texttt{Case 1}: $|a|\ge |b|$. Then $(a+b)\cdot (a-b)\ge 0$. Therefore we have
\begin{align*}
 |X| \gtrsim \frac{|a+b|}{\langle a+b \rangle} \gtrsim \frac{|a|}{\langle |a|+|b| \rangle}.
\end{align*}

\texttt{Case 2}: $|a| \le |b|$. We decompose the vector $a$ as
\begin{align*}
 a=a_1+a_2, \quad \text{where $a_1\parallel b$, $a_2 \perp b$.}
\end{align*}
Then we have
\begin{align}
 |X| = \sqrt{ |\frac{a_1+b}{\langle a+b \rangle } + \frac{a_1-b} {\langle a -b \rangle } |^2
+ | a_2 ( \frac 1{\langle a+b \rangle } + \frac 1{\langle a-b \rangle }) |^2}.
\label{e_lem1_2}
\end{align}
Consider two subcases.

\texttt{Subcase 2a}: $|a_2| \ge \frac 13 |a|$. Clearly by \eqref{e_lem1_2}, we get
\begin{align*}
 |X| \gtrsim |a_2| \cdot \frac 1 {\langle |a|+|b| \rangle} \gtrsim \frac {|a|}{\langle |a|+|b| \rangle}.
\end{align*}

\texttt{Subcase 2b}: $|a_1|\ge \frac 13 |a|$. Since $a \cdot b \ge 0$, we can write
$a_1=\lambda b$, with $\frac 1 3 \cdot \frac {|a|}{|b|} \le \lambda \le \frac {|a|}{|b|}$. By
\eqref{e_lem1_2} and the fact $a_1 \parallel b$, we obtain
\begin{align*}
 |X| &\gtrsim \left| \frac {(\lambda+1) b}{\langle a_2+(\lambda+1)b \rangle } +
\frac {(\lambda-1) b}{\langle a_2+(\lambda-1)b \rangle } \right| \\
& \gtrsim \left| \sqrt{ \frac{(\lambda+1)^2 |b|^2} {1+|a_2|^2+(\lambda+1)^2 |b|^2 }}
-\sqrt{ \frac{(\lambda-1)^2 |b|^2} {1+|a_2|^2+(\lambda-1)^2 |b|^2 }} \right| \\
& \gtrsim \frac{ \frac{(1+|a_2|^2) \cdot 4 \lambda |b|^2 } { (1+|a_2|^2+(\lambda+1)^2 |b|^2)
\cdot (1+|a_2|^2+(1-\lambda)^2|b|^2)} }
{ \sqrt{ \frac{(\lambda+1)^2|b|^2}{1+|a_2|^2+(\lambda+1)|b|^2}}
+\sqrt{ \frac{(\lambda-1)^2|b|^2}{1+|a_2|^2+(\lambda-1)|b|^2}}} \\
&\gtrsim \frac{|a|}{\langle a+b \rangle^2 \cdot \langle a-b\rangle^2} \cdot \langle a-b \rangle \\
& \gtrsim \frac{|a|}{\langle |a|+|b| \rangle^3}.
\end{align*}
The lemma is proved.

\end{proof}

\begin{lem} \label{lem_2}
 For any $A$, $B \in \R$, we have
\begin{align}
 \sqrt{1+A^2} + \sqrt{1+B^2} - \sqrt{1+(A+B)^2} \gtrsim \frac 1 {\sqrt{1+A^2}+\sqrt{1+B^2}}.
\label{e_lem_2_0}
\end{align}

\end{lem}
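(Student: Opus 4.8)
The plan is to reduce the three-term inequality to an elementary algebraic identity by rationalizing the "dangerous" difference $\sqrt{1+A^2}-\sqrt{1+(A+B)^2}$ (or its symmetric counterpart). First I would observe that the statement is symmetric in $A$ and $B$ and invariant under $(A,B)\mapsto(-A,-B)$, so WLOG $B\ge 0$; and if $AB\ge 0$ the left side is in fact bounded below by a constant (since then $\sqrt{1+A^2}+\sqrt{1+B^2}\ge\sqrt{2+A^2+B^2+2|AB|}=\sqrt{1+(A+B)^2}+(\text{something}\gtrsim 1/(\sqrt{1+A^2}+\sqrt{1+B^2}))$ after rationalizing), so the genuine case is $A$ and $B$ of opposite sign. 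By the symmetry just noted it then suffices to treat, say, $A\le 0\le B$, i.e. write $A=-a$, $B=b$ with $a,b\ge 0$.

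The key computation is: set $X=\sqrt{1+A^2}+\sqrt{1+B^2}-\sqrt{1+(A+B)^2}$ and pair the largest two radicals. Rationalizing,
\[
\sqrt{1+A^2}+\sqrt{1+B^2}-\sqrt{1+(A+B)^2}
=\frac{(\sqrt{1+A^2}+\sqrt{1+B^2})^2-(1+(A+B)^2)}{\sqrt{1+A^2}+\sqrt{1+B^2}+\sqrt{1+(A+B)^2}}.
\]
Expanding the numerator gives $1+2\sqrt{(1+A^2)(1+B^2)}-2AB$, and since $\sqrt{(1+A^2)(1+B^2)}\ge\sqrt{A^2B^2}\ge AB$ (with the middle term at least $|AB|\ge AB$), the numerator is $\ge 1$. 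The denominator is comparable to $\sqrt{1+A^2}+\sqrt{1+B^2}$ because $\sqrt{1+(A+B)^2}\le\sqrt{1+A^2}+\sqrt{1+B^2}$ by the triangle inequality (viewing these as norms of $(1,A)$ etc., or just $|A+B|\le|A|+|B|$). Hence
\[
X\;\ge\;\frac{1}{\sqrt{1+A^2}+\sqrt{1+B^2}+\sqrt{1+(A+B)^2}}\;\gtrsim\;\frac{1}{\sqrt{1+A^2}+\sqrt{1+B^2}},
\]
which is exactly \eqref{e_lem_2_0}.

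There is essentially no hard part here; the only point requiring a little care is making sure the numerator $1+2\sqrt{(1+A^2)(1+B^2)}-2AB$ is bounded below by a positive constant \emph{uniformly} in $A,B$, which follows from $\sqrt{(1+A^2)(1+B^2)}\ge|AB|$, giving numerator $\ge 1+2|AB|-2AB\ge 1$. One should also check that $X>0$ to begin with (so that the rationalization is legitimate and the sign is correct), but this is immediate from the same triangle inequality $\sqrt{1+(A+B)^2}\le\sqrt{1+A^2}+\sqrt{1+B^2}$ together with the fact that equality in $|A+B|=|A|+|B|$ still leaves the $+1$ inside each of the two radicals on the right, so the inequality is strict. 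If one prefers to avoid case analysis entirely, the rationalization identity above already works for all real $A,B$ directly, since the numerator bound $1+2\sqrt{(1+A^2)(1+B^2)}-2AB\ge 1$ holds regardless of signs; so in fact the cleanest writeup skips the WLOG reductions and just presents the one-line rationalization.
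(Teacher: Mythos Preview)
Your proof is correct and is essentially identical to the paper's own argument: the paper also rationalizes $\sqrt{1+A^2}+\sqrt{1+B^2}-\sqrt{1+(A+B)^2}$, obtains the numerator $1+2\sqrt{(1+A^2)(1+B^2)}-2AB\ge 1$, and bounds the denominator by $\sqrt{1+A^2}+\sqrt{1+B^2}$. As you yourself note at the end, the preliminary case analysis and WLOG reductions are unnecessary---the paper's writeup simply presents the one-line rationalization directly.
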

\begin{proof}
 This is almost trivial. We compute
\begin{align*}
 \text{LHS of \eqref{e_lem_2_0}} & = \frac{2+A^2+B^2+2\sqrt{1+A^2}\cdot \sqrt{1+B^2} - (1+A^2+B^2+2AB)}
{\sqrt{1+A^2}+\sqrt{1+B^2}+\sqrt{1+(A+B)^2}} \\
&\gtrsim \frac {1}{\sqrt{1+A^2}+\sqrt{1+B^2}}.
\end{align*}

\end{proof}

Now define
\begin{align*}
 a_1(\xi,\eta)&= \langle \xi \rangle - \langle \frac {\xi} 2 + \eta \rangle - \langle \frac {\xi} 2 -\eta \rangle, \\
a_2(\xi,\eta)&= \langle \xi \rangle + \langle \frac {\xi} 2 + \eta \rangle + \langle \frac {\xi} 2 -\eta \rangle, \\
a_3(\xi,\eta)&= \langle \xi \rangle - \langle \frac {\xi} 2 + \eta \rangle + \langle \frac {\xi} 2 -\eta \rangle, \\
a_4(\xi,\eta)&= \langle \xi \rangle + \langle \frac {\xi} 2 + \eta \rangle - \langle \frac {\xi} 2 -\eta \rangle. \\
\end{align*}

\begin{lem} \label{lem_3}
For any $\xi$, $\eta \in \R^2$, we have
 \item \begin{align}
       \frac 1 {\langle |\xi| + |\eta| \rangle} \lsm |a_i(\xi,\eta) | &
\lsm  {\langle |\xi| + |\eta| \rangle}, \quad\,\forall\, 1\le i \le 4.
\label{e_lem_3_1} \\
|\nabla_\eta a_i (\xi, \eta) | & \gtrsim
\begin{cases}
\frac {|\eta|}{ \langle |\xi|+|\eta| \rangle^3}, \quad i=1,2  \\
\frac {|\xi|}{\langle |\xi|+|\eta| \rangle^3}, \quad i=3,4
\end{cases}
\label{e_lem_3_3} \\
|\partial_\eta^k a_i(\xi,\eta) | & \lsm 1, \quad \forall\, 1\le i\le 4, \, k\ge 1.
\label{e_lem_3_5}
       \end{align}
If $i=3,4$, then
\begin{align}
 |\partial_\eta^k a_i (\xi, \eta) | \lsm |\xi|, \quad\forall\,1\le k\le 4, \label{e_lem_3_7} \\
\left|\partial_{\eta}\left( \frac {\nabla_\eta a_i(\xi,\eta)}{|\nabla_\eta a_i(\xi,\eta)|^2} \right)
\right| \lsm \frac{\langle |\xi|+|\eta| \rangle^6} {|\xi|}, \quad \forall\, \xi \ne 0, \label{e_lem_3_9} \\
\left|\partial_{\eta}^2 \left( \frac {\nabla_\eta a_i(\xi,\eta)}{|\nabla_\eta a_i(\xi,\eta)|^2} \right)
\right| \lsm \frac{\langle |\xi|+|\eta| \rangle^9} {|\xi|}, \quad \forall\, \xi\ne 0. \label{e_lem_3_11}
\end{align}
\end{lem}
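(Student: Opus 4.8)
The plan is to prove Lemma~\ref{lem_3} by treating each of its displayed estimates in turn, exploiting the explicit formulas for $a_i(\xi,\eta)$ in terms of Japanese brackets and reducing everything to two elementary inequalities that are either immediate or have just been established. Throughout I write $s=\langle\xi\rangle$, $p=\langle\tfrac\xi2+\eta\rangle$, $q=\langle\tfrac\xi2-\eta\rangle$, so that $a_1=s-p-q$, $a_2=s+p+q$, $a_3=s-p+q$, $a_4=s+p-q$. Note the uniform comparisons $1\le s\le p+q$, $p\le s+q$, $q\le s+p$ (triangle inequalities for the underlying vectors $\xi=(\tfrac\xi2+\eta)+(\tfrac\xi2-\eta)$, etc.), and $\langle|\xi|+|\eta|\rangle \sim \max(s,p,q) \sim s+p+q$.

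\medskip

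\textbf{The bracket bounds \eqref{e_lem_3_1}.} The upper bound $|a_i|\lesssim s+p+q\lesssim \langle|\xi|+|\eta|\rangle$ is trivial from the triangle inequality. For the lower bound, $a_2=s+p+q\ge 1$ is immediate. For $a_1=s-p-q$ one rationalizes: write $a_1=\frac{(s-p-q)(s+p+q)}{s+p+q}$ and observe that $s^2-(p+q)^2 = s^2 - p^2 - q^2 - 2pq$; applying Lemma~\ref{lem_2} with $A$, $B$ the magnitudes of $\tfrac\xi2\pm\eta$ (after noting $|\xi|^2\le(|\tfrac\xi2+\eta|+|\tfrac\xi2-\eta|)^2$) gives $|s-p-q|\gtrsim \frac1{p+q}\gtrsim \frac1{\langle|\xi|+|\eta|\rangle}$. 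For $a_3=s+q-p$ and $a_4=s+p-q$ one argues similarly: $s+q-p = \frac{(s+q)^2-p^2}{s+q+p} = \frac{s^2+q^2-p^2+2sq}{s+p+q}$, and since $2sq\ge 2$ while $|s^2+q^2-p^2|$ is controlled, one gets a lower bound $\gtrsim \frac{1}{s+p+q}$; alternatively use Lemma~\ref{lem_2} directly on the pair built from $\xi$ and $\tfrac\xi2-\eta$. This disposes of \eqref{e_lem_3_1}.

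\medskip

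\textbf{The gradient lower bound \eqref{e_lem_3_3} and the derivative bounds \eqref{e_lem_3_5}, \eqref{e_lem_3_7}.} Compute $\nabla_\eta a_1 = -\frac{\tfrac\xi2+\eta}{p} + \frac{\tfrac\xi2-\eta}{q}$ and $\nabla_\eta a_3 = -\frac{\tfrac\xi2+\eta}{p} - \frac{\tfrac\xi2-\eta}{q}$ (and $\nabla_\eta a_2=-\nabla_\eta a_1$, $\nabla_\eta a_4=-\nabla_\eta a_3$, so it suffices to treat $a_1$ and $a_3$). These are exactly of the form $\frac{b}{\langle b\rangle}\pm\frac{c}{\langle c\rangle}$ with $b=\tfrac\xi2+\eta$, $c=-(\tfrac\xi2-\eta)$, for which Lemma~\ref{lem_1} applies after setting $a=\tfrac12(b+c)$ and the companion vector: for $i=1,2$ the relevant "$a$" is (up to sign) $\eta$, giving $|\nabla_\eta a_i|\gtrsim |\eta|/\langle|\xi|+|\eta|\rangle^3$; for $i=3,4$ the relevant "$a$" is $\tfrac\xi2$, giving $|\nabla_\eta a_i|\gtrsim |\xi|/\langle|\xi|+|\eta|\rangle^3$. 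For \eqref{e_lem_3_5}, each $\partial_\eta^k a_i$ with $k\ge1$ is a sum of $k$-th order $\eta$-derivatives of $\langle\tfrac\xi2\pm\eta\rangle$; since $\langle\,\cdot\,\rangle$ has all derivatives bounded (its $k$-th derivative is $O(1)$ uniformly), we get $|\partial_\eta^k a_i|\lesssim 1$. For \eqref{e_lem_3_7} with $i=3,4$: here the $s=\langle\xi\rangle$ term is $\eta$-independent and the remaining two bracket terms enter with the \emph{same} sign, so one checks that the leading behavior of $\partial_\eta^k(q-p)$ (or $p-q$) exhibits a cancellation; more carefully, $\partial_\eta(p-q)$ is the Lemma~\ref{lem_1} expression which is $O(|\xi|/\langle\cdot\rangle^3)\lesssim|\xi|$, and higher derivatives $\partial_\eta^k(p-q)$, $2\le k\le4$, can be written (via Taylor expansion of $\langle\,\cdot\,\rangle$ about $\pm\tfrac\xi2$ or direct differentiation and regrouping) so that every term carries a factor of $|\xi|$; this is the one place where a short but genuine computation is needed.

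\medskip

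\textbf{The quotient derivative bounds \eqref{e_lem_3_9}, \eqref{e_lem_3_11}.} Write $Q_i := \nabla_\eta a_i/|\nabla_\eta a_i|^2$ for $i=3,4$. Differentiating, $\partial_\eta Q_i$ and $\partial_\eta^2 Q_i$ are rational expressions in $\nabla_\eta a_i$ and its higher $\eta$-derivatives, with denominators that are powers of $|\nabla_\eta a_i|$. The numerators are controlled using \eqref{e_lem_3_7}: each $\partial_\eta^k a_i$ ($1\le k\le 4$) contributes a factor $\lesssim|\xi|$. The denominators are bounded below using \eqref{e_lem_3_3}: $|\nabla_\eta a_i|\gtrsim |\xi|/\langle|\xi|+|\eta|\rangle^3$, so $1/|\nabla_\eta a_i|^j \lesssim \langle|\xi|+|\eta|\rangle^{3j}/|\xi|^j$. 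Counting powers: $\partial_\eta Q_i$ has the schematic form $\frac{\partial_\eta^2 a_i}{|\nabla a_i|^2} + \frac{(\partial_\eta^2 a_i)(\nabla a_i)^2}{|\nabla a_i|^4}$, i.e. up to $\frac{|\xi|}{(|\xi|/\langle\cdot\rangle^3)^2} = \frac{\langle\cdot\rangle^6}{|\xi|}$, giving \eqref{e_lem_3_9}; similarly $\partial_\eta^2 Q_i$ picks up one more factor $\langle\cdot\rangle^3/|\xi|$ from an extra denominator power net of an extra $|\xi|$ in a numerator derivative, yielding $\frac{\langle\cdot\rangle^9}{|\xi|}$ and hence \eqref{e_lem_3_11}. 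The hypotheses $\xi\ne0$ in these two estimates are exactly what make the bounds finite.

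\medskip

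\textbf{Main obstacle.} The routine parts are the bracket bounds and \eqref{e_lem_3_5}. The genuine work — and the step I expect to be the crux — is \eqref{e_lem_3_7}, i.e. showing that \emph{all} $\eta$-derivatives up to order $4$ of $a_3,a_4$ gain a full factor of $|\xi|$, since this requires pinning down the cancellation between the two same-sign bracket terms $\pm(p-q)$ rather than just using the trivial $O(1)$ bound from \eqref{e_lem_3_5}. Once \eqref{e_lem_3_7} is in hand, the quotient bounds \eqref{e_lem_3_9}--\eqref{e_lem_3_11} are a mechanical power count combining \eqref{e_lem_3_3} and \eqref{e_lem_3_7}. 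I would therefore organize the proof so that \eqref{e_lem_3_1}, \eqref{e_lem_3_3}, \eqref{e_lem_3_5} come first (each a one-line appeal to Lemmas~\ref{lem_1}--\ref{lem_2} or to boundedness of derivatives of $\langle\,\cdot\,\rangle$), then establish \eqref{e_lem_3_7} by a careful Taylor/regrouping argument, and finally derive \eqref{e_lem_3_9}--\eqref{e_lem_3_11} by differentiating the quotient and bounding numerator and denominator separately.
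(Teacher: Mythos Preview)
Your outline is essentially the same as the paper's proof: \eqref{e_lem_3_1} via Lemma~\ref{lem_2}, \eqref{e_lem_3_3} via Lemma~\ref{lem_1}, \eqref{e_lem_3_5} from boundedness of derivatives of $\langle\cdot\rangle$, and \eqref{e_lem_3_9}--\eqref{e_lem_3_11} by differentiating the quotient and power-counting with \eqref{e_lem_3_3} and \eqref{e_lem_3_7}. The one place where you diverge is \eqref{e_lem_3_7}, which you flag as the ``main obstacle'' requiring a ``careful Taylor/regrouping argument''. In fact the paper disposes of it in one line by the mean value theorem: writing $a(x)=\langle x\rangle$ and noting $\langle \tfrac\xi2-\eta\rangle=\langle \eta-\tfrac\xi2\rangle$, one has for $i=3,4$
\[
\partial_\eta^k a_i(\xi,\eta)=\pm\bigl((\partial_x^k a)(\eta+\tfrac\xi2)-(\partial_x^k a)(\eta-\tfrac\xi2)\bigr),
\]
hence $|\partial_\eta^k a_i|\le \|\partial_x^{k+1}a\|_\infty\,|\xi|\lesssim|\xi|$. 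So the ``cancellation'' you are looking for is literally that $p-q$ is a difference of the \emph{same} smooth function at two arguments $\xi$ apart; no Taylor regrouping is needed. (Minor slip: in $a_3=s-p+q$ the bracket terms have \emph{opposite} signs, not the same sign; what matters is that after one $\eta$-derivative the $s$ term drops and you are left with $\pm\partial_\eta^k(p-q)$.)
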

\begin{proof}
The lower bound in \eqref{e_lem_3_1} follows from Lemma \ref{lem_2}. The upper bound in
\eqref{e_lem_3_1} is trivial. If $i=1,2$, then \eqref{e_lem_3_3} follows from Lemma \ref{lem_1}
with $a=\eta$, $b=\frac{\xi}2$. Similarly the case $i=3,4$ follows from Lemma \ref{lem_1}
with $a=\frac {\xi}2$, $b=\eta$. Next observe that $|\partial_x^k ( \langle x \rangle ) | \lsm
\langle x \rangle^{-(k-1)}$ for any $k\ge 1$. Therefore \eqref{e_lem_3_5} holds.
Next we show \eqref{e_lem_3_7}. This is a matter of direct calculation.
Define $a(x) = \langle x \rangle $. Then for $i=3,4$, $1\le k \le 4$,
\begin{align*}
 |\partial_\eta^k a_i(\xi, \eta) | & = |(\partial_x^k a)(\eta+\frac {\xi} 2)
-(\partial_x^k a)(\eta -\frac{\xi} 2) | \\
& \lsm \| \partial_x^{k+1} a \|_{\infty} \cdot |\xi| \\
& \lsm |\xi|,
\end{align*}
where in the last inequality we have used again the fact that $|\partial_x^k ( \langle x \rangle ) | \lsm
\langle x \rangle^{-(k-1)}$. This finishes the proof of \eqref{e_lem_3_7}. Finally we show
\eqref{e_lem_3_9} and \eqref{e_lem_3_11}. By direct differentiation, disgarding inessential coefficients and
keeping only the main terms, we have
\begin{align*}
 \partial_\eta \left( \frac{\nabla_\eta a_i(\xi,\eta)}{ |\nabla a_i (\xi,\eta) |^2} \right)
& \sim \frac{(\partial_\eta a_i(\xi,\eta))^2 \partial_\eta^2 a_i (\xi,\eta)} { |\nabla_\eta a_i(\xi,\eta)|^4},\\
\partial_\eta^2 \left( \frac{\nabla_\eta a_i(\xi,\eta)}{ |\nabla a_i (\xi,\eta) |^2} \right)
& \sim \frac{ \partial_\eta^3 a_i(\xi,\eta) \cdot (\partial_\eta a_i(\xi,\eta))^2
+ (\partial_\eta^2 a_i(\xi,\eta))^2 \cdot \partial_\eta a_i(\xi,\eta)} {|\nabla_\eta a_i(\xi,\eta)|^4} \\
& \quad + \frac{(\partial_\eta a_i(\xi,\eta))^3 \cdot (\partial_\eta^2 a_i(\xi,\eta))^2 }{ |\nabla_\eta a_i(\xi,\eta)|^6}.
\end{align*}
Therefore by \eqref{e_lem_3_3} and \eqref{e_lem_3_7}, we obtain
\begin{align*}
 \left| \partial_\eta \left( \frac{\nabla_\eta a_i(\xi,\eta)}{|\nabla_\eta a_i(\xi,\eta)|^2 } \right)
\right| & \lsm \frac{|\partial_\eta^2 a_i(\xi,\eta)|}{|\nabla_\eta a_i(\xi,\eta)|^2} \lsm
|\xi| \cdot \frac {\langle |\xi|+|\eta| \rangle^6} {|\xi|^2} \\
&\lsm \frac{\langle |\xi|+|\eta| \rangle^6}{|\xi|},
\end{align*}
and
\begin{align*}
\left| \partial_\eta^2 \left( \frac{\nabla_\eta a_i(\xi,\eta)}{|\nabla_\eta a_i(\xi,\eta)|^2 } \right)
\right| & \lsm  \frac{\langle |\xi|+|\eta \rangle^6}{|\xi|} + |\xi|^2 \cdot
\frac{\langle |\xi|+|\eta \rangle^9}{|\xi|^3} \\
& \lsm \frac{\langle |\xi|+|\eta \rangle^9}{|\xi|}.
\end{align*}

\end{proof}

\begin{lem}\label{lem_101}
Let $a_j(\xi,\eta)$ be as defined as above. Define
\begin{align*}
F_i(s,\xi)=\int_{\R^2} e^{is(a_i(\xi,\eta)-a_i(\xi,0))}h(\eta) d\eta.
\end{align*}
Then we have
\begin{align*}
|F_i(s,\xi)|&\lsm \frac 1{\langle s\rangle^{1-}}\lrx ^{12}(1+|\xi|^{-2})\\
&(\sum_{|\alpha|\le 2}\int \lre^{12}|\partial^{\alpha} h(\eta)|d\eta+\sum_{|\alpha|\le 2}
\sup_{|\eta|\le 2}|\partial^{\alpha}h(\eta)|)\\
|\partial_s F_i(s,\xi)|&\lsm \frac 1{\langle s\rangle^{2-}}\lrx ^{12}(1+|\xi|^{-2})\\
&(\sum_{|\alpha|\le 2}\int \lre^{12}|\partial^{\alpha} h(\eta)|d\eta+\sum_{|\alpha|\le 2}
\sup_{|\eta|\le 2 }|\partial^{\alpha}h(\eta)|)
\end{align*}
\end{lem}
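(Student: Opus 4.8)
\textbf{Proof plan for Lemma \ref{lem_101}.}

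The plan is to estimate $F_i(s,\xi)$ by a stationary/non-stationary phase analysis in $\eta$, splitting the integral according to the size of the gradient $\nabla_\eta a_i(\xi,\eta)$ relative to a threshold depending on $s$. First I would record the basic facts from Lemma \ref{lem_3}: the phase $\varphi_s(\eta) := s(a_i(\xi,\eta)-a_i(\xi,0))$ has $|\nabla_\eta a_i(\xi,\eta)| \gtrsim |\eta|\langle|\xi|+|\eta|\rangle^{-3}$ for $i=1,2$ and $\gtrsim |\xi|\langle|\xi|+|\eta|\rangle^{-3}$ for $i=3,4$, while all higher $\eta$-derivatives of $a_i$ are $O(1)$ (and $O(|\xi|)$ when $i=3,4$). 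The amplitude $h(\eta)$ is controlled through $\int\langle|\eta|\rangle^{12}\sum_{|\alpha|\le2}|\partial^\alpha h|\,d\eta$ together with the sup over $|\eta|\le 2$, so the $\langle|\xi|+|\eta|\rangle$ weights appearing when we differentiate the phase can be absorbed into these norms (this is where the exponent $12$ and the $(1+|\xi|^{-2})$ factor get used).

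The core estimate is the bound on $|F_i(s,\xi)|$. I would fix a cutoff scale and write $F_i = F_i^{\mathrm{near}} + F_i^{\mathrm{far}}$ where $F_i^{\mathrm{near}}$ localizes to the region where the phase is roughly stationary — for $i=1,2$ this is $|\eta|\lesssim$ (a small power of $\langle s\rangle$) times geometric weights, for $i=3,4$ there is essentially a genuine stationary point only near $\eta=0$. On $F_i^{\mathrm{near}}$ one estimates crudely by the measure of the region, which is small, picking up the $\langle s\rangle^{-1+}$ (the $\eps$-loss coming from optimizing the cutoff scale, exactly as in \cite{GNT06}). On $F_i^{\mathrm{far}}$ one integrates by parts using the operator $L = \frac{1}{is}\,\nai\cdot\nabla_\eta$, which satisfies $L e^{i\varphi_s} = e^{i\varphi_s}$; each application gains a factor $\langle s\rangle^{-1}$ times $|\nabla_\eta a_i|^{-1}$, and the derivatives falling on $\nai/\gai^2$ are controlled by \eqref{e_lem_3_9}--\eqref{e_lem_3_11}, while derivatives on $h$ cost the $\partial^\alpha h$ terms. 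Two integrations by parts suffice to produce an integrable tail in $\eta$ and the stated power of $\langle s\rangle$; the geometric weights $\langle|\xi|+|\eta|\rangle$ accumulated in the process are bounded by $\lrx^{12}(1+|\xi|^{-2})$ times the $\eta$-weighted $L^1$ norm of $h$. Balancing the near and far contributions at the optimal cutoff yields $|F_i|\lesssim \langle s\rangle^{-1+}(\cdots)$.

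For $\partial_s F_i$, differentiating under the integral sign brings down a factor $i(a_i(\xi,\eta)-a_i(\xi,0))$, which by the mean value theorem and \eqref{e_lem_3_5}, \eqref{e_lem_3_7} is $O(|\eta|)$ (or $O(|\eta|\wedge$ something involving $|\xi|$) and is in any case harmless against the weights; so $\partial_s F_i$ is an integral of the same type with amplitude $h(\eta)$ replaced by $(a_i(\xi,\eta)-a_i(\xi,0))h(\eta)$, and one repeats the same near/far decomposition. The gain of an \emph{extra} power $\langle s\rangle^{-1}$ (i.e.\ $\langle s\rangle^{-2+}$) comes because on the far region one now performs \emph{three} integrations by parts rather than two — the extra polynomial weight from the $(a_i-a_i(\xi,0))$ factor is paid for by the $\langle|\eta|\rangle^{12}$ weight in the hypothesis — and on the near region the extra factor $|\eta|$ shrinks the already-small region contribution by another power of the cutoff scale. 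The main obstacle is the case $i=3,4$ together with small $|\xi|$: there $|\nabla_\eta a_i|\gtrsim |\xi|\langle\cdots\rangle^{-3}$ degenerates as $\xi\to0$, so the integration-by-parts gain is only $\langle s\rangle^{-1}|\xi|^{-1}\langle\cdots\rangle^3$ per step; one must check that two (resp.\ three) steps still close, with the $|\xi|^{-1}$ powers collected into the stated $(1+|\xi|^{-2})$ prefactor and no worse, and that the near-$\xi=0$ region (where integration by parts is forbidden) is handled purely by the trivial measure bound — this bookkeeping of the $|\xi|^{-1}$ and $\langle|\xi|+|\eta|\rangle$ powers against the fixed exponents $12$ and $2$ in the statement is the delicate part.
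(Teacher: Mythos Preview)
Your plan for $F_i$ (near/far split plus two integrations by parts on the far piece) is fine and would in fact yield the sharp $\langle s\rangle^{-1}$; the paper integrates by parts only once there, but either count works since both stay within the available $|\alpha|\le 2$ derivatives of $h$.

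The gap is in the $\partial_s F_i$ argument. Three integrations by parts would require control of $\partial_\eta^3 h$, which the hypothesis does not supply (only $|\alpha|\le 2$). For $i=3,4$ there is a second obstruction: each integration by parts costs a factor $|\xi|^{-1}$, so three of them would produce $|\xi|^{-3}$, exceeding the stated $(1+|\xi|^{-2})$ prefactor.

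The actual source of the extra $\langle s\rangle^{-1}$ is the amplitude factor $a_i(\xi,\eta)-a_i(\xi,0)$ itself, used more sharply than your mean--value bound $O(|\eta|)$. For $i=1,2$ the point $\eta=0$ is a \emph{critical point} of $a_i(\xi,\cdot)$, so in fact $|a_i(\xi,\eta)-a_i(\xi,0)|\lesssim|\eta|^2$. This quadratic vanishing exactly cancels the worst singularity $|\nabla_\eta a_i|^{-2}\sim|\eta|^{-2}\langle|\xi|+|\eta|\rangle^{6}$ arising from \emph{two} integrations by parts, leaving at most an $|\eta|^{-2}$ weight in the integrand; in two dimensions this integrates over $\{s^{-N}\le|\eta|\le 2\}$ to a logarithm, hence $s^{-2}\log s\lesssim s^{-2+}$. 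For $i=3,4$ there is no stationary point in $\eta$ at all (your remark that there is one near $\eta=0$ is backwards --- it is $i=1,2$ that have $\nabla_\eta a_i(\xi,0)=0$); since $|\nabla_\eta a_i|\gtrsim|\xi|\langle|\xi|+|\eta|\rangle^{-3}$ uniformly in $\eta$, two integrations by parts already give $s^{-2}|\xi|^{-2}$ with no near/far split needed. In both cases the number of integrations by parts matches the number of derivatives of $h$ the lemma actually assumes.
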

\begin{proof}
We first estimate $\partial_s F_i$. 
Without loss of generality, we also assume $s>1$.

We first consider $\partial_s F_1, \partial_s F_2$, where we shall use the following bound for $a_i$, $i=1,2$,
\begin{align}\label{z1}
\begin{cases}
|\nabla a_i|\gtrsim \frac{|\eta|}{\langle |\eta|+|\xi|\rangle^3}\\
|\partial^ k a_i|\lsm 1, \ \mbox{ for } k\ge 1,\\
|a_i(\xi,\eta)-a_i(\xi,0)|\lsm |\eta|^2.
\end{cases}
\end{align}
Since for the phase function $a_i(\xi,\eta)-a_i(\xi,0)$, the stationary phase point occurs
at $\eta=0$, we introduce the cutoff function separate the near-origin regime and others.

Let $\psi(\eta)$ be a smooth cutoff function such that
\begin{align}
\psi(\eta)=\begin{cases}
1, & |\eta|\le 1,\\
0, & |\eta|>2
\end{cases}
\end{align}
We split $\partial_s F_i$, $i=1,2$ as follows
\begin{align}
\partial_s F_i(s,\xi)&=\int_{\R^2}(a_i(\xi,\eta)-a_i(\xi,0))e^{is(a_i(\xi,\eta)-a_i(\xi,0))} h(\eta) d\eta\notag\\
&=\int_{\R^2}(a_i(\xi,\eta)-a_i(\xi,0))e^{is(a_i(\xi,\eta)-a_i(\xi,0))} h(\eta) \psi(\eta) d\eta\label{lem_101_a}\\
&+\int_{\R^2}(a_i(\xi,\eta)-a_i(\xi,0))e^{is(a_i(\xi,\eta)-a_i(\xi,0))} h(\eta)(1-\psi(\eta))d\eta\label{lem_101_b}
\end{align}

For \eqref{lem_101_b}, we do integration by parts twice. This gives
\begin{align*}
\eqref{lem_101_b}=&\frac 1{s^2}\int_{\R^2}\nabla\cdot \biggl(\frac{\nabla a_i}{|\nabla a_i|^2}
\nabla\cdot\biggl(\frac{\nabla a_i}{|\nabla a_i|^2}(a_i(\xi,\eta)-a_i(\xi,0))h(\eta)
(1-\psi(\eta))\biggl)\biggl)\\
 &\qquad \qquad e^{is(a_i(\xi,\eta)-a_i(\xi,0))} d\eta.
\end{align*}
Using the product rule and the estimates \eqref{z1}, we obtain
\begin{align*}
|\eqref{lem_101_b}|&\lsm \frac 1{s^2}\int_{|\eta|\ge 1}
(1+|\eta|^{-4})(|\xi|+|\eta|)^{12}(\sum_{|\alpha|\le 2}|\partial ^{\alpha} h(\eta)|)d\eta\\
&\lsm \frac 1{s^2}\int_{|\eta|\ge 1}\langle|\xi|+|\eta|\rangle^{12}(\sum_{|\alpha|\le 2}|
\partial^{\alpha} h(\eta)|) d\eta\\
&\lsm \frac 1{s^2}\lrx^{12}\sum_{|\alpha|\le 2}
\int_{\R^2} \lre ^{12}
|\partial^{\alpha} h(\eta)| d\eta.
\end{align*}
We now turn to estimating \eqref{lem_101_a}. To this end, we further write it into the following
\begin{align}
\eqref{lem_101_a}&=\int (a_i(\xi,\eta)-a_i(\xi,0))e^{is(a_i(\xi,\eta)-a_i(\xi,0))}
h(\eta)\psi(\eta)\psi(\eta s^{10}) d\eta\label{a1}\\
&+\int (a_i(\xi,\eta)-a_i(\xi,0))e^{is(a_i(\xi,\eta)-a_i(\xi,0))} h(\eta)\psi(\eta)(1-\psi(\eta s^{10})) d\eta.\label{a2}
\end{align}
For \eqref{a1}, we simply use \eqref{z1} to get
\begin{align}
\eqref{a1}&\lsm \int_{|\eta|\le 2 s^{-10}}|\eta||h(\eta)|\psi(\eta)|d\eta\notag\\
&\lsm s^{-10}\int |h(\eta)|d\eta.\label{finish_2}
\end{align}
For \eqref{a2}, we use integration by parts twice and \eqref{z1} to get
\begin{align*}
|\eqref{a2}|&\le \frac 1{s^2}\int \biggl|
\nabla\cdot\biggl(\frac{\nabla a_i}{|\nabla a_i|^2} \nabla \cdot\biggl(\frac{\nabla a_i}
{|\nabla a_i|^2}(a_i(\xi,\eta)-a_i(\xi,0))h(\eta)\psi(\eta)(1-\psi(\eta s^{10}))\biggl)\biggl)\biggl| d\eta\\
&\lsm \frac 1{s^2}\biggl\{
\int |a_i(\xi,\eta)-a_i(\xi,0)||\nabla a_i|^{-4}|h(\eta)||\psi(\eta)||1-\psi(\eta s^{10})|d\eta\\
&+\int_{|\eta|\le 2} |a_i(\xi,\eta)-a_i(\xi,0)||\nabla a_i|^{-3}(\sum_{|\alpha|\le 1}|\partial^{\alpha} h(\eta)|)(|1-\psi(\eta s^{10})|+s^{10}|\nabla\psi(\eta s^{10})|) d\eta\\
&+\int_{|\eta|\le 2} |a_i(\xi,\eta)-a_i(\xi,0)||\nabla a_i|^{-2}(\sum_{|\alpha|\le 2}|\partial^{\alpha} h(\eta)|)\\
&\qquad\qquad\times(|1-\psi(\eta s^{10})|+s^{10}|\nabla \psi(\eta s^{10})|+s^{20}|\Delta \psi(\eta s^{10})|) d\eta\\
&+\int_{|\eta|\le 2} |\nabla a_i||\nabla a_i|^{-3}|h(\eta)| d\eta\\
&+\int_{|\eta|\le 2} |\nabla a_i||\nabla a_i|^{-2}(\sum_{|\alpha|\le 1}|\partial^{\alpha}h(\eta)|)
(|1-\psi(\eta s^{10})|+s^{10}|\nabla \psi(\eta s^{10})|) d\eta.\biggr\}
\end{align*}
Using estimates \eqref{z1}, we further bound \eqref{a2} as
\begin{align*}
|\eqref{a2}|&\lsm \frac 1{s^2}\biggl\{ \int _{s^{-10}\le |\eta|\le 2}
|\eta|^{-2}\langle |\eta|+|\xi|\rangle ^{12}|h(\eta)|d\eta\\
&\quad+\int_{s^{-10}\le |\eta|\le 2}|\eta|^{-1}
\langle|\eta|+|\xi|\rangle^9(\sum_{|\alpha|\le 1}|\partial^{\alpha} h(\eta)|)d\eta\\
&\quad+\int_{|\eta|\sim s^{-10}}\langle|\eta|+|\xi|\rangle^9 \eta^{-1} s^{10}(\sum_{|\alpha|\le 1}
|\partial^{\alpha} h(\eta)|) d\eta\\
&\quad+\int_{s^{-10}\le |\eta|\le 2}\langle|\eta|+|\xi|\rangle^6(\sum_{|\alpha|\le 2}|\partial ^{\alpha}h(\eta)|)d\eta\\
&\quad+\int_{|\eta|\sim s^{-10}}\langle|\eta|+|\xi|\rangle^6(\sum_{|\alpha|\le 2}|\partial^{\alpha } h(\eta)|)s^{20} d\eta\biggl\}\\
&\lsm \frac 1{s^2}\log s\lrx^{12}\sum_{|\alpha|\le 2}\sup_{|\eta|\le 2}|\partial^{\alpha} h(\eta)|\\
&\lsm \frac {\lrx^{12}}{s^{2-}}\sum_{|\alpha|\le 2}\sup_{|\eta|\le 2}|\partial^{\alpha} h(\eta)|.
\end{align*}
Collecting the estimates above we obtain
\begin{align}\label{est_12}
|\partial_s F_i(s,\xi)|\lsm \frac 1{s^{2-}}\lrx^{12}
\sum_{|\alpha|\le 2}(\int \lre^{12}|\partial^{\alpha} h(\eta)| d\eta+
\sup_{|\eta|\le 2}|\partial^{\alpha} h(\eta)|).
\end{align}
for $i=1,2$.

Now it remains to obtain the estimates for $F_i$, $i=3,4$. We will use the following estimates for $a_i$,
\begin{align}\label{z2}
\begin{cases}
|\nabla a_i|\gtrsim \frac{|\xi|}{\langle |\eta|+|\xi|\rangle^3}\\
|\partial^ k a_i|\lsm 1,\ k\ge 1,\\
\ |a_i(\xi,\eta)-a_i(\xi,0)|\lsm |\eta|.
\end{cases}
\end{align}
We will estimate $\partial_s F_i$ simply by integration by parts twice. We have
\begin{align*}
|\partial_s F_i(s,\xi)|&=\biggl|\int (a_i(\xi,\eta)-a_i(\xi,0)) e^{is(a_i(\xi,\eta)-a_i(\xi,0))} h(\eta) d\eta\biggl|\\
&=\frac 1{s^2}\biggl|\int\nabla\cdot\biggl(\frac{\nabla a_i}{|\nabla a_i|^2}\nabla\cdot\biggl(
\frac{\nabla a_i}{|\nabla a_i|^2}(h(\eta)(a_i(\xi,\eta)-a_i(\xi,0)))\biggl)\biggl) e^{is(a_i(\xi,\eta)-a_i(\xi,0))} d\eta\biggl|\\
&\lsm \frac 1{s^2}\biggl\{\int |\nabla a_i|^{-4} h(\eta)|\eta| d\eta\\
&\qquad+\int |\nabla a_i|^{-3}\sum_{|\alpha|\le 1}|\partial^{\alpha} h(\eta)|(|\eta|+|\nabla a_i|) d\eta\\
&\qquad+\int |\nabla a_i|^{-2}\sum_{|\alpha|\le 2}|\partial^{\alpha} h(\eta)|\lre d\eta\biggl\}\\
&\lsm \frac 1{s^2}\int |\xi|^{-2}\langle |\eta|+|\xi|\rangle^{12}\sum_{|\alpha|\le 2}
|\partial^{\alpha}h(\eta)|d\eta\\
&\lsm \frac 1{s^2}|\xi|^{-2}\lrx^{12}\sum_{|\alpha|\le 2}\int \partial^{\alpha}h(\eta)\lre^{12} d\eta.
\end{align*}
This finishes the estimates for $\partial_s F_i$, $i=3,4$. Finally we make a remark about the decay of $F_i$. Indeed, the decay of $F_i$ can be obtained by slightly modifying the above computation, namely, replacing the integration by parts twice with IBP once. We omit the details. The proof of this lemma is completed.

\end{proof}

As a consequence of this lemma, we have
\begin{lem}\label{lem_102}
Assume the function $h=h(\xi,\eta)$ is such that
\begin{align*}
 \sum_{|\alpha|\le 2} \int_{\R^2} \langle \eta \rangle^{12} |\partial^\alpha h(\xi,\eta)| d\eta
+\|\partial^\alpha h(\xi,\eta)\|_\infty <\infty.
\end{align*}
Then for $i=1,2,3,4$ and any $\xi\neq 0$, we have
\begin{align}
\int_ t^{\infty}\int_{\R^2} e^{isa_i(\xi,\eta)} h(\xi,\eta) d\eta ds=-
\int_{\R^2} \frac{e^{it a_i(\xi,\eta)}}{ia_i(\xi,\eta)} h(\xi,\eta) d\eta .
\end{align}
\end{lem}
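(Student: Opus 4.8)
The plan is to prove the identity for each fixed $\xi\neq0$ separately, interpreting the left-hand side as the improper integral $\int_t^\infty G_i(s)\,ds:=\lim_{R\to\infty}\int_t^R G_i(s)\,ds$, where $G_i(s):=\int_{\R^2}e^{isa_i(\xi,\eta)}h(\xi,\eta)\,d\eta$. Throughout $\xi\neq0$ is fixed and all implied constants may depend on $\xi$. The proof splits into two parts: (i) show the improper integral converges; (ii) evaluate it via an Abel regularization that legitimizes Fubini's theorem. The point of (ii) is that for a fixed $\eta$ the integral $\int_t^\infty e^{isa_i(\xi,\eta)}\,ds$ diverges, so one cannot interchange the $s$- and $\eta$-integrals directly; inserting $e^{-\epsilon s}$ and letting $\epsilon\to0^+$ repairs this, the limit being computed by dominated convergence.

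For part (i) I exploit a hidden oscillation: since $a_i(\xi,0)$ does not depend on $\eta$, we may write $G_i(s)=e^{isa_i(\xi,0)}F_i(s,\xi)$, where $F_i(s,\xi)=\int_{\R^2}e^{is(a_i(\xi,\eta)-a_i(\xi,0))}h(\xi,\eta)\,d\eta$ is exactly the oscillatory integral estimated in Lemma \ref{lem_101}, applied with amplitude $h(\xi,\cdot)$ (whose relevant $\eta$-norms are finite by the hypothesis of the present lemma); thus $|F_i(s,\xi)|\lesssim\langle s\rangle^{-1+\delta}$ and $|\partial_sF_i(s,\xi)|\lesssim\langle s\rangle^{-2+\delta}$ for any small $\delta>0$. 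An elementary computation shows $a_i(\xi,0)\neq0$ for every $\xi$: indeed $a_1(\xi,0)=\langle\xi\rangle-2\langle\tfrac{\xi}{2}\rangle=-3\big(\langle\xi\rangle+2\langle\tfrac{\xi}{2}\rangle\big)^{-1}<0$, while $a_2(\xi,0)=\langle\xi\rangle+2\langle\tfrac{\xi}{2}\rangle\geq3$ and $a_3(\xi,0)=a_4(\xi,0)=\langle\xi\rangle\geq1$. Hence, integrating by parts in $s$,
\begin{align*}
\int_t^R G_i(s)\,ds=\left[\frac{e^{isa_i(\xi,0)}}{ia_i(\xi,0)}F_i(s,\xi)\right]_{s=t}^{s=R}-\frac1{ia_i(\xi,0)}\int_t^R e^{isa_i(\xi,0)}\,\partial_sF_i(s,\xi)\,ds;
\end{align*}
the boundary term at $s=R$ is $O(|F_i(R,\xi)|)\to0$ as $R\to\infty$, and the remaining integral converges absolutely because $\langle s\rangle^{-2+\delta}$ is integrable on $[t,\infty)$. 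Thus $\int_t^\infty G_i(s)\,ds$ exists; moreover $e^{-\epsilon s}G_i(s)$ is absolutely integrable on $[t,\infty)$ for every $\epsilon>0$, since $|G_i(s)|=|F_i(s,\xi)|\lesssim\langle s\rangle^{-1+\delta}$.

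For part (ii), fix $\epsilon>0$. Since $\int_t^\infty\!\int_{\R^2}e^{-\epsilon s}|h(\xi,\eta)|\,d\eta\,ds=\epsilon^{-1}e^{-\epsilon t}\norm{h(\xi,\cdot)}_{L_\eta^1}<\infty$ (the $L^1_\eta$ norm being finite by the $\alpha=0$ part of the hypothesis), Fubini's theorem applies, and performing the now absolutely convergent $s$-integral gives
\begin{align*}
\int_t^\infty e^{-\epsilon s}G_i(s)\,ds=\int_{\R^2}h(\xi,\eta)\int_t^\infty e^{s(ia_i(\xi,\eta)-\epsilon)}\,ds\,d\eta=\int_{\R^2}\frac{e^{t(ia_i(\xi,\eta)-\epsilon)}}{\epsilon-ia_i(\xi,\eta)}\,h(\xi,\eta)\,d\eta.
\end{align*}
Now let $\epsilon\to0^+$. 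On the right-hand side the integrand converges pointwise to $-e^{ita_i(\xi,\eta)}\big(ia_i(\xi,\eta)\big)^{-1}h(\xi,\eta)$ and is dominated by $|a_i(\xi,\eta)|^{-1}|h(\xi,\eta)|\lesssim\langle\eta\rangle\,|h(\xi,\eta)|$, using the lower bound $|a_i(\xi,\eta)|\gtrsim\langle|\xi|+|\eta|\rangle^{-1}$ from \eqref{e_lem_3_1}; this dominating function lies in $L^1_\eta$ by hypothesis, so dominated convergence yields $-\int_{\R^2}e^{ita_i(\xi,\eta)}\big(ia_i(\xi,\eta)\big)^{-1}h(\xi,\eta)\,d\eta$, i.e. the right side of the asserted identity. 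On the left-hand side, since $\int_t^\infty G_i(s)\,ds$ converges by part (i), the standard Abel limit argument applies: with $\Psi(R)=\int_t^R G_i$ (so $\Psi(t)=0$ and $\Psi$ is bounded with $\Psi(R)\to\int_t^\infty G_i$), integration by parts gives $\int_t^\infty e^{-\epsilon s}G_i(s)\,ds=\epsilon\int_t^\infty e^{-\epsilon s}\Psi(s)\,ds$, which tends to $\int_t^\infty G_i(s)\,ds$ because $\epsilon\int_t^\infty e^{-\epsilon s}\,ds=e^{-\epsilon t}\to1$. Equating the two limits completes the proof.

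The only genuine obstacle is part (i): because the bilinear integrand $G_i(s)$ decays only like $\langle s\rangle^{-1+}$, the $s$-integral is merely conditionally convergent, so neither Fubini nor a crude absolute bound is available; one must extract the oscillatory factor $e^{isa_i(\xi,0)}$, verify $a_i(\xi,0)\neq0$, and lean on the sharp $s$-decay of $F_i$ and $\partial_sF_i$ furnished by Lemma \ref{lem_101}. Once the improper integral is known to exist, part (ii) is a routine regularization.
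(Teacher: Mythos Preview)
Your proof is correct and follows essentially the same route as the paper: factor $G_i(s)=e^{isa_i(\xi,0)}F_i(s,\xi)$, invoke Lemma~\ref{lem_101} for the decay of $F_i$ and $\partial_sF_i$, integrate by parts in $s$ to establish convergence of the improper integral, then use an Abel regularization $e^{-\epsilon s}$ together with Fubini and dominated convergence (via the lower bound \eqref{e_lem_3_1}) to identify the limit. The only cosmetic difference is that the paper chains the $\epsilon$-regularized equalities directly, whereas you separate convergence (part~(i)) from evaluation (part~(ii)) and spell out the Abel limit argument with $\Psi$; your explicit verification that $a_i(\xi,0)\neq0$ is a nice addition.
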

\begin{proof}
Define
\begin{align*}
F(s,\xi)=\int e^{is(a_i(\xi,\eta)-a_i(\xi,0))} h(\xi,\eta) d\eta.
\end{align*}
We write
\begin{align*}
&\quad\int_t^{\infty}\int e^{is a_i(\xi,\eta)} h(\xi,\eta) d\eta ds
=\int_t^{\infty} e^{isa_i(\xi,0)} F_i(s,\xi) ds\\
&=\int_t^{\infty}\frac 1{ia(\xi,0)}\int_t^{\infty} \partial_s(e^{isa_i(\xi,0)}F_i(s,\xi))ds
-\frac 1{ia_i(\xi,0)}\int_t^{\infty} e^{isa_i(\xi,0)}\partial_s F_i(s,\xi) ds\\
&=-\frac 1{ia_i(\xi,0)}e^{ita_i(\xi,0)}F_i(t,\xi)-\frac 1{ia_i(\xi,0)}\int_t^{\infty}
e^{isa_i(\xi,0)}\partial_sF_i(s,\xi) ds.
\end{align*}
Now due to the decay of $\partial_s F_i$, the integral in the above is absolutely convergent. Therefore we have
\begin{align*}
&\quad\int_t^{\infty}\int e^{isa_i(\xi,\eta)} h(\xi,\eta) d\eta ds\\
&=\lim_{\eps\to 0^+}\biggl[-\frac 1{ia_i(\xi,0)-\eps} e^{ita_i(\xi,0)-\eps t} F_i(t,\xi)
-\frac 1{ia(\xi,0)-\eps}\int_t^{\infty} e^{isa_i(\xi,0)-\eps s}\partial_s F_i(s,\xi) ds\biggl]\\
&=\lim_{\eps\to 0^+}\int_t^{\infty} e^{isa_i(\xi,0)-\eps s}F_i(s,\xi) ds\\
&=\lim_{\eps\to 0^+}\int_t^{\infty}\int e^{isa_i(\xi,\eta)-\eps s}h(\xi,\eta) d\eta ds\\
&=\lim_{\eps\to 0^+}-\int \frac{e^{ita_i(\xi,\eta)-\eps t}}{ia_i(\xi,\eta)-\eps} h(\xi,\eta) d\eta\\
&=-\int \frac{e^{it a_i(\xi,\eta)}}{ia_i(\xi,\eta)} h(\xi,\eta) d\eta.
\end{align*}

\end{proof}

\begin{lem} \label{lem_10}
Let $0<\delta \le \frac 1 {100}$. Then for $i=1,2$,
\begin{align}
 \left| \int_{|\eta|\le \frac{2\delta |\xi|}{\langle \xi \rangle^2}} e^{it a_i(\xi,\eta)} d\eta
\right| \lsm \frac 1 {\langle t \rangle} \cdot \langle \xi \rangle^7. \label{e_lem10_1}
\end{align}

\end{lem}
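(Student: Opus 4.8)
The plan is to treat the integral, call it $I$, as a two-dimensional oscillatory integral having a single stationary point at $\eta=0$ whose Hessian is, thanks to $\delta\le\frac1{100}$, uniformly non-degenerate on the (very small) disc $B_R:=\{\eta:|\eta|\le R\}$ where $R:=\frac{2\delta|\xi|}{\lrx^{2}}$; the sharp $\jt^{-1}$ decay then comes from passing to polar coordinates and integrating by parts once in the radial variable. First I would dispose of the trivial ranges: if $\xi=0$ then $B_R$ has measure zero and $I=0$, while if $t\le1$ then $\jt\sim1$ and crudely $|I|\le\pi R^{2}\le\pi\lrx^{-2}\lsm\jt^{-1}\lrx^{7}$ (here $R\le\delta$, since $\lrx^{2}\ge2|\xi|$). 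So assume $\xi\neq0$ and $t\ge1$; then $\jt\le\sqrt2\,t$, and it is enough to prove $|I|\lsm\lrx^{3}/t$. Writing $p:=\xi/2$, on $B_R$ one has $\bigl||p\pm\eta|-|p|\bigr|\le R$, hence $\langle p\pm\eta\rangle\sim\lrx$; and after subtracting the constant $a_i(\xi,0)$ from the phase and, if necessary, flipping an overall sign — neither of which changes $|I|$ — we may take $\phi(\eta):=2\langle p\rangle-\langle p+\eta\rangle-\langle p-\eta\rangle$, which by convexity of $\langle\cdot\rangle$ is $\le0$ with a strict maximum at $\eta=0$.

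Next I would record the Hessian of $\phi$ on $B_R$. Pick an orthonormal frame with $e_1=\xi/|\xi|$ and write $\eta=\eta_1e_1+\eta_2e_2$. Since $\phi$ is even in each of $\eta_1,\eta_2$, the matrix $D^2\phi(0)$ is diagonal in this frame, and a direct differentiation gives $D^2\phi(0)=-\,\mathrm{diag}\bigl(2\langle p\rangle^{-3},\,2\langle p\rangle^{-1}\bigr)$; moreover $|\partial^{k}\langle\cdot\rangle|\lsm\langle\cdot\rangle^{1-k}$ gives $\|D^{3}\phi\|_{L^{\infty}(B_R)}\lsm\lrx^{-2}$. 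Therefore $\|D^{2}\phi(\eta)-D^{2}\phi(0)\|\le R\,\|D^{3}\phi\|_{L^{\infty}(B_R)}\lsm R\lrx^{-2}\le2\delta\lrx^{-3}$ on $B_R$, and since the smaller diagonal entry of $-D^2\phi(0)$ is $2\langle p\rangle^{-3}\ge2\lrx^{-3}$, choosing $\delta\le\frac1{100}$ keeps $-D^{2}\phi(\eta)$ positive definite on all of $B_R$, with
\[
\lrx^{-3}\ \lsm\ \bigl|\,\omega^{T}D^{2}\phi(\eta)\,\omega\,\bigr|\ \lsm\ \lrx^{-1}\qquad\text{for every }\omega\in S^{1},\ |\eta|\le R .
\]
The sole function of the hypothesis $\delta\le\frac1{100}$ is this absorption of the Hessian remainder into the degenerate diagonal entry $2\langle p\rangle^{-3}$.

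Finally I would go to polar coordinates, $I=\int_{S^{1}}\bigl(\int_{0}^{R}e^{it\phi(r\omega)}\,r\,dr\bigr)\,d\omega$, and treat the inner integral for fixed $\omega$. With $g(r):=\phi(r\omega)$ one has $g(0)=g'(0)=0$ and $g''(r)<0$ with $\lrx^{-3}\lsm|g''(r)|\lsm\lrx^{-1}$ on $[0,R]$ by the bound above; hence $g$ is strictly decreasing on $(0,R]$ and $\lrx^{-3}r\lsm|g'(r)|\lsm\lrx^{-1}r$. The substitution $u=-g(r)\ge0$ turns the inner integral into $\int_{0}^{u(\omega)}e^{-itu}m(u)\,du$ with amplitude $m(u)=r/|g'(r)|$ ($r=r(u)$ the inverse function), for which $\|m\|_{\infty}\lsm\lrx^{3}$; and writing $-g'(r)+r\,g''(r)=\int_{0}^{r}\bigl(g''(r)-g''(s)\bigr)\,ds$, whose modulus is $\le r^{2}\|g'''\|_{\infty}\lsm r^{2}\lrx^{-2}$, the total variation of $m$ is $\int_{0}^{R}|g'(r)|^{-2}\,\bigl|-g'(r)+r\,g''(r)\bigr|\,dr\lsm\lrx^{4}R\lsm\lrx^{3}$. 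One integration by parts — the one-dimensional van der Corput estimate with an amplitude of bounded variation — then gives, uniformly in $\omega$, $\bigl|\int_{0}^{u(\omega)}e^{-itu}m(u)\,du\bigr|\lsm t^{-1}\bigl(\|m\|_{\infty}+\mathrm{Var}\,m\bigr)\lsm\lrx^{3}/t$, and integrating over $S^{1}$ yields $|I|\lsm\lrx^{3}/t$, completing the plan. I expect the main obstacle to be precisely the verification in the middle step that the phase is uniformly non-degenerate over the \emph{whole} disc $B_R$: a naive single integration by parts carried out directly in $\eta$ loses a factor $\log t$ near the stationary point (the same two-dimensional logarithm that is absorbed into $s^{2-}$ in the proof of Lemma \ref{lem_101}), so obtaining the clean $\jt^{-1}$ genuinely requires the curvature of $\phi$, and the polar reduction is designed to exploit that curvature through a one-dimensional radial integration by parts.
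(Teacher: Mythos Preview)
Your argument is correct and, at its core, parallels the paper's: both reduce to polar coordinates and then perform a single integration by parts in the radial variable to exploit the non-degenerate Hessian at $\eta=0$. The paper, however, makes a different radial substitution. After rescaling $\eta=\lambda\tilde\eta$ with $\lambda=|\xi|/2$ and passing to polars $(\rho,\theta)$, the paper sets $r=\rho^{2}$; the point is that in this variable the phase $\tilde g(r,\theta)$ has $|\partial_r\tilde g|\gtrsim\lambda^{2}(1+\lambda)^{-3/2}$ bounded away from zero on the whole interval, so a direct first-order van der Corput estimate applies. They then bound $|\partial_r^2\tilde g|$ by brute algebra and arrive at $|I|\lsm t^{-1}(1+\lambda)^{7}\sim t^{-1}\lrx^{7}$. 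Your route --- substituting the phase itself, $u=-g(r)$, and controlling the BV norm of the resulting amplitude $m(u)=r/|g'(r)|$ via the identity $-g'(r)+rg''(r)=\int_0^r(g''(r)-g''(s))\,ds$ --- is cleaner, avoids the explicit algebraic computation of $\partial_r^2\tilde g$, and in fact yields the sharper bound $|I|\lsm\lrx^{3}/t$, which of course still implies the stated $\lrx^{7}/t$. The paper's substitution $r=\rho^{2}$ buys simplicity (the phase derivative never vanishes, so no amplitude singularity to worry about at the origin), whereas your substitution buys a better exponent at the price of checking that $m$ extends continuously to $u=0$ and has finite variation; both trade-offs are fine here.
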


\begin{proof}
This is a stationary phase calculation. The main point is to get the explicit dependence
on the parameter $\xi$.
 WOLOG we can assume $t \ge 1$. Define $\lambda=\frac 12 |\xi|$. By rotation invariance, we may assume
$\xi =2 \lambda \hat e_1 = (2\lambda,0)$. Let $\eta = \lambda \tilde \eta$. Then
\begin{align*}
 & \langle \eta+\frac {\xi} 2 \rangle + \langle \eta -\frac {\xi} 2 \rangle \\
= &\sqrt{ 1 +\lambda^2 (|\tilde \eta|^2 +2\tilde \eta_1+1)} +
\sqrt{ 1 +\lambda^2 (|\tilde \eta|^2 -2\tilde \eta_1+1)}.
\end{align*}
Here $\tilde \eta_1$ is the first component of the vector $\tilde \eta$. Passing to radial
coordinates, we obtain
\begin{align*}
 \text{LHS of \eqref{e_lem10_1}} \lsm
\left| \int_0^{2\pi} \int_0^{\frac {4\delta}{\sqrt{1+4\lambda^2}}} e^{itg(\rho,\theta)} \lambda^2 \rho d\rho d\theta
\right|,
\end{align*}
where
\begin{align*}
 g(\rho,\theta) = \sqrt{ 1+\lambda^2(\rho^2+2\rho \cos \theta+1)}
+\sqrt{ 1+\lambda^2(\rho^2-2\rho \cos \theta+1)}.
\end{align*}
Now set $r=\sqrt{\rho}$. Then we get
\begin{align}
 \text{LHS of \eqref{e_lem10_1}} \lsm
\left| \int_0^{2\pi} \int_0^{\frac{16\delta^2}{1+4\lambda^2}}
e^{it\tilde g(r,\theta)} \lambda^2 dr d\theta \right|.
\label{e_lem10_1a}
\end{align}
Here
\begin{align*}
 \tilde g(r,\theta) & = \sqrt{1+\lambda^2(r+2\sqrt r \cos \theta+1)} + \sqrt{1+\lambda^2(r-2\sqrt r \cos \theta +1)} \\
& =: \sqrt{A_+} + \sqrt{A_-}.
\end{align*}
Observe that for $r\le \frac {16\delta^2}{1+4\lambda^2}$,
\begin{align*}
 1\lsm A_+, \, A_- \lsm \lambda+1.
\end{align*}
On the other hand
\begin{align*}
 \frac {\partial}{\partial r} \tilde g(r,\theta) & = \lambda^2
\cdot \frac{A_+ +A_-+2 \sqrt{A_+ \cdot A_-} -4 \lambda^2 \cos^2 \theta}
{\sqrt{A_+\cdot A_-} \cdot (\sqrt{A_+}+\sqrt{A_-})} \\
& = \lambda^2 \cdot
\frac{2+2\lambda^2 (r+1) +2 \sqrt{(1+\lambda^2(r+1))^2-4\lambda^4 r\cos^2\theta} -4\lambda^2 \cos^2\theta}
{\sqrt{A_+\cdot A_-} \cdot (\sqrt{A_+} +\sqrt{A_-})}.
\end{align*}
For $r \le \frac{16\delta^2}{1+4\lambda^2}$, we have
\begin{align*}
 4\lambda^4 r \cos^2\theta \le 2 \lambda^2 \le 2\lambda^2 (r+1).
\end{align*}
This gives
\begin{align*}
 & 2+2\lambda^2(r+1) + 2 \sqrt{(1+\lambda^2(r+1))^2 -4\lambda^4 r \cos^2\theta} -4 \lambda^2 \cos^2\theta \\
\ge & 2+2\lambda^2 +2\lambda^2(r+1) -4\lambda^2 \\
\ge & 2.
\end{align*}
Therefore we get for $0\le r \le \frac{16\delta^2}{1+4\lambda^2}$,
\begin{align}
 \left| \frac {\partial}{\partial r} \tilde g(r,\theta) \right|
\gtrsim \frac {\lambda^2} {(1+\lambda)^{\frac 32}}.
\label{e_lem10_2}
\end{align}
 Next we need to bound the second order derivative $\frac {\partial^2}{\partial r^2} \tilde g(r,\theta)$.
 We first rewrite $\tilde g(r,\theta)$ as
 \begin{align*}
  \tilde g(r,\theta) & = \sqrt { ( \sqrt{A_+} + \sqrt {A_-} )^2} \\
 & = \sqrt{ 2+2\lambda^2(r+1) + 2 \sqrt{(1+\lambda^2(r+1))^2 -4\lambda^4 r \cos^2 \theta}} \\
 & =: \sqrt{B_1+ 2 \sqrt{B_2}}.
 \end{align*}
By direct differentiation, we have
\begin{align*}
 \frac{\partial^2 \tilde g}{\partial r^2} & = -\frac 1 4 \cdot (B_1+2\sqrt{B_2})^{-\frac 32}
\cdot ( \frac {\partial B_1}{\partial r} + B_2^{-\frac 12} \cdot \frac{\partial B_2}{\partial r} )^2 \\
& \quad + \frac 12 \cdot (B_1+2\sqrt{B_2})^{-\frac 12} \cdot
( -\frac 12 B_2^{-\frac 32} \cdot ( \frac{\partial B_2} {\partial r} )^2
+B_2^{-\frac 12} \cdot \frac{\partial^2 B_2}{\partial r^2} ).
\end{align*}
It is not difficult to check that for $0<r \le \frac {16 \delta^2}{1+4\lambda^2}$,
\begin{align*}
 B_1 +2\sqrt{B_2} \gtrsim 1, \quad \text{and $B_2\gtrsim 1$}, \\
\left| \frac{\partial B_1}{\partial r} \right| \lsm \lambda^2,
\quad \text{and $\left| \frac{\partial B_2} {\partial r} \right| \lsm \lambda^2+\lambda^4$}.
\end{align*}
Also
\begin{align*}
 \left| \frac {\partial^2 B_2} {\partial r^2} \right| \lsm \lambda^4.
\end{align*}
Therefore
\begin{align}
 \left| \frac{\partial^2 \tilde g } {\partial r^2} \right| &\lsm
(\lambda^2 +\lambda^4)^2 +\lambda^4 \notag \\
& \lsm \lambda^4 +\lambda^8. \label{e_lem10_3}
\end{align}
By \eqref{e_lem10_2} and \eqref{e_lem10_3}, we obtain
\begin{align*}
 \left| \int_0^{\frac {16\delta^2}{1+4\lambda^2}} e^{it\tilde g(r,\theta)} dr \right|
& \lsm \frac 1 t \cdot \left( \frac{(1+\lambda)^{\frac 32}}{\lambda^2}
+\frac{(1+\lambda)^3}{\lambda^4} \cdot (\lambda^4+\lambda^8) \cdot \frac 1{1+\lambda^2} \right) \\
& \lsm \frac 1 t \cdot \left( \frac{(1+\lambda)^{\frac 32}} {\lambda^2} +(1+\lambda)^5 \right).
\end{align*}
Therefore by \eqref{e_lem10_1a}, we get
\begin{align*}
 \text{LHS of \eqref{e_lem10_1} } \lsm \frac 1 t \cdot (1+\lambda)^7.
\end{align*}
The lemma is proved.
\end{proof}

\begin{lem} \label{lem_11}
Let $\psi \in C_c^\infty(\R^2)$ be such that
\begin{align*}
 \psi(x) = \begin{cases}
            1, \quad |x| \le 1, \\
0,\quad |x| \ge 2.
           \end{cases}
\end{align*}
Then for any $i=1,2$, we have
\begin{align}
 & \left| \int_{|\eta| \le \frac {2\delta |\xi|}{\langle \xi\rangle^2}}
\left( \frac{h(\xi,\eta)} {a_i(\xi,\eta)} \psi(\frac{\eta \langle \xi \rangle^2}{\delta |\xi|} )
-\frac{h(\xi,0)}{a_i(\xi,0)} \right) \cdot e^{it a_i(\xi,\eta)} d\eta \right| \notag \\
\lsm & \frac 1 {\langle t \rangle}
\cdot \langle \xi \rangle^9 \cdot \sup_{|\tilde \eta| \le \frac{2\delta |\xi|}{\langle \xi \rangle^2}}
( |h(\xi,\tilde \eta)| + | \nabla_\eta h(\xi, \tilde \eta)| ).
\label{e_lem_11_1}
\end{align}
\end{lem}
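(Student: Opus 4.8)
The plan is to exploit the cancellation at the degenerate stationary point $\eta=0$: for $i=1,2$ the phase $a_i(\xi,\cdot)$ has its only critical point at $\eta=0$, and the amplitude in \eqref{e_lem_11_1} vanishes there (since $\psi(0)=1$, the bracket equals $\frac{h(\xi,\eta)}{a_i(\xi,\eta)}-\frac{h(\xi,0)}{a_i(\xi,0)}$ on $\{\psi\equiv1\}$). That single vanishing is exactly enough to absorb the degeneracy $|\nabla_\eta a_i(\xi,\eta)|\gtrsim|\eta|\,\langle|\xi|+|\eta|\rangle^{-3}$ recorded in \eqref{e_lem_3_3}. First I would normalize: we may assume $\xi\neq0$, and since the ball $|\eta|\le R:=\frac{2\delta|\xi|}{\langle\xi\rangle^2}$ has radius $R\lesssim1$ (so that $\langle|\xi|+|\eta|\rangle\sim\langle\xi\rangle$ on it), the crude estimate $|\cdots|\le\int_{|\eta|\le R}\bigl(|\tfrac{h}{a_i}\psi|+|\tfrac{h(\xi,0)}{a_i(\xi,0)}|\bigr)\,d\eta\lesssim\langle\xi\rangle R^2\sup_{|\tilde\eta|\le R}|h(\xi,\tilde\eta)|$ already gives \eqref{e_lem_11_1} when $t\lesssim1$; so we may also assume $t\ge1$. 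Write $c(\xi)=\frac{h(\xi,0)}{a_i(\xi,0)}$ and $G(\xi,\eta)=\frac{h(\xi,\eta)}{a_i(\xi,\eta)}\psi\bigl(\frac{\eta\langle\xi\rangle^2}{\delta|\xi|}\bigr)-c(\xi)$, so that the quantity to estimate is $\int_{|\eta|\le R}G(\xi,\eta)\,e^{ita_i(\xi,\eta)}\,d\eta$, with $G(\xi,0)=0$ and $G(\xi,\eta)=-c(\xi)$ on $|\eta|=R$ (where the cutoff vanishes).

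Next I would integrate by parts once using $L=\frac{\nabla_\eta a_i}{|\nabla_\eta a_i|^2}\cdot\nabla_\eta$, which satisfies $Le^{ita_i}=it\,e^{ita_i}$ and, by \eqref{e_lem_3_3}, is smooth for $\eta\neq0$; the singularity of $\frac{\nabla_\eta a_i}{|\nabla_\eta a_i|^2}$ at the origin is harmless because $G$ vanishes there (to make this rigorous, integrate first over $\varepsilon\le|\eta|\le R$ and let $\varepsilon\to0$, the inner boundary piece being $O(\langle\xi\rangle^3\varepsilon)$). This produces $\frac1{it}$ times a boundary integral over $|\eta|=R$ plus the interior term $-\frac1{it}\int_{|\eta|\le R}\nabla_\eta\cdot\!\bigl(\frac{\nabla_\eta a_i}{|\nabla_\eta a_i|^2}G\bigr)e^{ita_i}\,d\eta$. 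On $|\eta|=R$ the amplitude is just $-c(\xi)$, and using the lower bound $|a_i|\gtrsim\langle\xi\rangle^{-1}$ from \eqref{e_lem_3_1} one gets $|c(\xi)|\lesssim\langle\xi\rangle\sup|h|$, while \eqref{e_lem_3_3} gives $|\nabla_\eta a_i|^{-1}\lesssim\langle\xi\rangle^3/R$ there and the arc length is $\sim R$, so the boundary term is $\lesssim t^{-1}\langle\xi\rangle^4\sup|h|$ --- comfortably within budget, and this is exactly where the explicit cutoff earns its keep. (Alternatively one may peel off the constant piece $-c(\xi)\int_{|\eta|\le R}e^{ita_i}\,d\eta$ and dispose of it by Lemma \ref{lem_10}; this is essentially the same computation.)

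For the interior term I would expand $\nabla_\eta\cdot\bigl(\frac{\nabla_\eta a_i}{|\nabla_\eta a_i|^2}G\bigr)=\frac{\nabla_\eta a_i}{|\nabla_\eta a_i|^2}\cdot\nabla_\eta G+G\,\nabla_\eta\cdot\!\bigl(\frac{\nabla_\eta a_i}{|\nabla_\eta a_i|^2}\bigr)$ and use Lemma \ref{lem_3}: $|\nabla_\eta a_i|^{-1}\lesssim\langle\xi\rangle^3/|\eta|$, then $|\nabla_\eta\cdot(\tfrac{\nabla_\eta a_i}{|\nabla_\eta a_i|^2})|\lesssim|\partial_\eta^2a_i|\,|\nabla_\eta a_i|^{-2}\lesssim\langle\xi\rangle^6/|\eta|^2$ (invoking $|\partial_\eta^2a_i|\lesssim1$ from \eqref{e_lem_3_5}), and --- with $|a_i|\gtrsim\langle\xi\rangle^{-1}$ and $|\nabla_\eta a_i|\lesssim1$ --- the bound $|\nabla_\eta(h/a_i)|\lesssim\langle\xi\rangle|\nabla_\eta h|+\langle\xi\rangle^2|h|$. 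I would then split the $\eta$-integral at $R':=\frac{\delta|\xi|}{\langle\xi\rangle^2}=R/2$: on $|\eta|\le R'$ one has $\psi\equiv1$, hence $|G(\xi,\eta)|\lesssim|\eta|\,(\langle\xi\rangle|\nabla_\eta h|+\langle\xi\rangle^2|h|)$; on $R'\le|\eta|\le R$ the derivatives falling on $\psi(\eta/R')$ appear, but they are supported where $|\eta|\sim R'$ so that $1/R'\sim1/|\eta|$, and the crude bound $|G(\xi,\eta)|\lesssim\langle\xi\rangle\sup|h|$ suffices. In each of the resulting terms the worst radial factor is $|\eta|^{-1}$ on $|\eta|\le R'$, where $\int_{|\eta|\le R'}|\eta|^{-1}\,d\eta=2\pi R'\lesssim|\xi|\langle\xi\rangle^{-2}\le\langle\xi\rangle^{-1}$, or $|\eta|^{-2}$ on $R'\le|\eta|\le R$, where $\int_{R'\le|\eta|\le R}|\eta|^{-2}\,d\eta=2\pi\log2$; tracking the $\langle\xi\rangle$-weights (one power of $|\xi|\le\langle\xi\rangle$ being absorbed by $R$ or $R'$ at each such radial integration) one finds the interior term is $\lesssim t^{-1}\langle\xi\rangle^7\sup_{|\tilde\eta|\le R}\bigl(|h(\xi,\tilde\eta)|+|\nabla_\eta h(\xi,\tilde\eta)|\bigr)$. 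Adding the boundary term and recalling the $t\lesssim1$ case yields \eqref{e_lem_11_1} --- in fact with $\langle\xi\rangle^7$ in place of the stated $\langle\xi\rangle^9$.

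The only genuinely delicate point is the first one: one must subtract the stationary-point value $c(\xi)$ \emph{before} integrating by parts, because integrating $\frac{h}{a_i}\psi\cdot e^{ita_i}$ directly would leave the non-integrable factor $\frac{h(\xi,0)}{a_i(\xi,0)}|\eta|^{-2}$ near the origin in two dimensions; the point of the whole lemma is precisely that this subtraction, together with the localization $\psi(\eta/R')$, reduces everything to interior radial singularities of order at most $|\eta|^{-1}$ and an $O(1)$ contribution on the boundary $|\eta|=R$. All remaining work is routine bookkeeping of $\langle\xi\rangle$-powers via Lemma \ref{lem_3}.
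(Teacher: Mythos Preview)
Your argument is correct and follows essentially the same route as the paper: define $G_i(\xi,\eta)=\frac{h(\xi,\eta)}{a_i(\xi,\eta)}\psi(\frac{\eta\langle\xi\rangle^2}{\delta|\xi|})-\frac{h(\xi,0)}{a_i(\xi,0)}$, use $G_i(\xi,0)=0$ together with a mean-value bound to absorb the $|\eta|^{-1}$ singularity of $|\nabla_\eta a_i|^{-1}$, and integrate by parts once to pick up boundary and interior pieces on $|\eta|\le R$. The only cosmetic difference is that the paper bounds $|G_i|/|\eta|$ and $|\nabla_\eta G_i|$ uniformly by $\frac{\langle\xi\rangle^3}{|\xi|}\sup(|h|+|\nabla_\eta h|)$ over the whole ball rather than splitting at $R'=R/2$; both computations in fact yield $\langle\xi\rangle^7$, which the paper then relaxes to the stated $\langle\xi\rangle^9$.
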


\begin{proof}
If $t\le 1$, then \eqref{e_lem_11_1} holds trivially. Now assume $t\ge 1$. Define
\begin{align*}
 G_i (\xi,\eta) = \frac {h(\xi,\eta)}{a_i (\xi,\eta)} \psi ( \frac{\eta \langle \xi \rangle^2}{\delta |\xi|} )
-\frac {h(\xi,0)}{a_i(\xi,0)}.
\end{align*}
Note that $\psi(0)=1$ and
\begin{align*}
 \left| \nabla_\eta ( \psi( \frac{\eta \langle \xi \rangle^2}{\delta |\xi|} ) )
\right| \lsm \frac{\langle \xi \rangle^2}{|\xi|}.
\end{align*}
By Lemma \ref{lem_3}, we have for $i=1,2$
\begin{align}
 \sup_{|\eta| \le \frac {2\delta |\xi|}{\langle \xi \rangle^2}}
|G_i(\xi,\eta)| \lsm \langle \xi \rangle \cdot \sup_{|\eta| \le \frac{2\delta |\xi|}{\langle \xi \rangle^2}}
|h(\xi, \eta)|.
\label{e_lem_11_2}
\end{align}
Since $G_i(\xi,0)=0$, we get
\begin{align}
 \sup_{|\eta| \le \frac{2\delta |\xi|}{\langle \xi \rangle^2}}
\frac{|G_i(\xi,\eta)|}{|\eta|} &
\lsm \sup_{|\eta| \le \frac{2\delta |\xi|}{\langle \xi \rangle^2}}
|\nabla_\eta G_i(\xi,\eta) |\notag \\
& \lsm \sup_{|\eta| \le \frac{2\delta |\xi|}{\langle \xi \rangle^2}}
\left( \frac{|\nabla_\eta h(\xi,\eta)|}{|a_i(\xi,\eta)|} +
\frac{|h(\xi,\eta)| \cdot |\nabla_\eta a_i(\xi,\eta)|}{|a_i(\xi,\eta)|^2} \right) \notag \\
& \qquad + \frac{\langle \xi \rangle^2}{|\xi|} \cdot \sup_{|\eta| \sim \frac{2\delta |\xi|}{\langle \xi \rangle^2}}
\frac {|h(\xi,\eta)|}{|a_i(\xi,\eta)|} \notag \\
& \lsm \frac{\langle \xi \rangle^3} {|\xi|}
\cdot \sup_{|\eta|\le \frac {2\delta |\xi|}{\langle \xi \rangle^2}}
( |h(\xi,\eta)| + | \nabla_\eta h(\xi,\eta)| ).
\label{e_lem_11_3}
\end{align}
Using integration by parts together with \eqref{e_lem_11_2}, \eqref{e_lem_11_3}, we then
obtain or $i=1,2$
\begin{align*}
 \text{LHS of \eqref{e_lem_11_1} }
& = \frac 1 t \bigl| \int_{|\eta| = \frac{2\delta |\xi|}{\langle \xi \rangle^2}}
G_i(\xi,\eta)  \frac{\nabla_\eta a_i}{|\nabla_\eta a_i|^2} \cdot \hat \eta e^{ita_i(\xi,\eta)} d \sigma(\eta) \\
& \quad - \int_{|\eta|\le \frac {2\delta |\xi|}{\langle \xi \rangle^2}}
\nabla_\eta \cdot ( G_i(\xi,\eta) \frac{\nabla_\eta a_i(\xi,\eta)}{|\nabla_\eta a_i (\xi,\eta) |^2} )
e^{ita_i(\xi, \eta)} d\eta \bigr| \\
& \lsm \frac 1 t \int_{|\eta|=\frac{2\delta |\xi|}{\langle \xi \rangle^2}}
\langle \xi \rangle \cdot \sup_{|\tilde \eta| \le \frac{2\delta |\xi|}{\langle \xi \rangle^2}}
|h(\xi,\tilde \eta)|\cdot \frac{\langle |\eta| +|\xi| \rangle^3}{|\eta|} d\sigma(\eta)  \\
& \quad + \frac 1 t \cdot \frac{\langle \xi \rangle^3}{|\xi|}
\cdot \sup_{|\tilde \eta| \le \frac{2\delta |\xi|}{\langle \xi \rangle^2}}
(|h(\xi,\tilde \eta)| + |\nabla_\eta h(\xi, \tilde \eta) | ) \cdot \int_{|\eta| \le \frac{2\delta |\xi|}{\langle \xi \rangle^2}}
\frac{\langle |\eta| +|\xi| \rangle^6}{|\eta|} d\eta \\
& \lsm \frac 1 t \cdot \langle \xi \rangle^9
\cdot \sup_{|\tilde \eta| \le \frac{2\delta |\xi|}{\langle \xi \rangle^2}}
(|h(\xi,\tilde \eta) + |\nabla_\eta h(\xi, \tilde \eta) | ).
\end{align*}

\end{proof}

\begin{lem} \label{lem_12}
 For $i=1,2$, we have
\begin{align}
 & \left| \int_{\R^2} \frac{h(\xi,\eta)}{a_i (\xi, \eta)} (1-\psi(\frac{\eta \langle \xi \rangle^2}{\delta |\xi|}))
\cdot e^{it a_i (\xi,\eta)} d\eta \right| \notag \\
\lsm & \frac 1 {\langle t \rangle} \cdot (1+|\log |\xi| |) \cdot \langle \xi \rangle^{-5}
 \cdot \sup_{\tilde \xi,\tilde \eta \in \R^2} \langle |\tilde \xi|+ |\tilde \eta| \rangle^{12}
\cdot ( |h(\tilde \xi, \tilde \eta)| + |\nabla_{\tilde \eta} h(\tilde \xi, \tilde \eta)|).
\label{e_lem_12_1}
\end{align}

\end{lem}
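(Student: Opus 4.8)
On the support of $1-\psi(\eta\langle\xi\rangle^2/(\delta|\xi|))$ one has $|\eta|\gtrsim \delta|\xi|/\langle\xi\rangle^2>0$, so by \eqref{e_lem_3_3} (here $\xi\neq0$) the gradient $\nabla_\eta a_i$ does not vanish and the phase $e^{ita_i}$ is non-stationary on the region of integration. The plan is therefore to integrate by parts once in $\eta$, which produces the decay $\langle t\rangle^{-1}$. The one delicate point is that the vector field $\nabla_\eta a_i/|\nabla_\eta a_i|^2$ is singular, of size $\langle|\xi|+|\eta|\rangle^{3}/|\eta|$, as $\eta\to0$, so one of the terms after integration by parts carries a factor $|\eta|^{-2}$; in two dimensions this is only logarithmically non-integrable near the origin, and the cutoff at scale $|\eta|\sim\delta|\xi|/\langle\xi\rangle^2$ truncates exactly this divergence — this is the source of the factor $1+|\log|\xi||$. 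For $t\le1$ the estimate is immediate: $\langle t\rangle\sim1$, and using $|a_i|^{-1}\lsm\langle|\xi|+|\eta|\rangle$ from \eqref{e_lem_3_1} the integral is dominated by $\int_{\R^2}\langle|\xi|+|\eta|\rangle\,|h(\xi,\eta)|\,d\eta$, which the weighted–sup hypothesis on $h$ bounds by $\lsm\langle\xi\rangle^{-5}\sup_{\tilde\xi,\tilde\eta}\langle|\tilde\xi|+|\tilde\eta|\rangle^{12}(|h|+|\nabla h|)$, and $1+|\log|\xi||\ge1$.

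So assume $t\ge1$. I would write $e^{ita_i}=\frac1{it}\,\frac{\nabla_\eta a_i}{|\nabla_\eta a_i|^2}\cdot\nabla_\eta e^{ita_i}$, valid on $\{1-\psi\neq0\}$, and integrate by parts; the boundary term at infinity vanishes because $h$ decays, and there is no inner boundary since $1-\psi$ vanishes near $\eta=0$. This rewrites the left side of \eqref{e_lem_12_1} as
\begin{align*}
-\frac1{it}\int_{\R^2}\nabla_\eta\cdot\Bigl(\frac{\nabla_\eta a_i}{|\nabla_\eta a_i|^2}\,\frac{h(\xi,\eta)}{a_i(\xi,\eta)}\,\bigl(1-\psi(\tfrac{\eta\langle\xi\rangle^2}{\delta|\xi|})\bigr)\Bigr)e^{ita_i(\xi,\eta)}\,d\eta.
\end{align*}
Expanding the divergence by the product rule gives three families of terms, according to whether $\nabla_\eta$ lands on $\nabla_\eta a_i/|\nabla_\eta a_i|^2$, on $h/a_i$, or on the cutoff. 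From \eqref{e_lem_3_1}, \eqref{e_lem_3_3}, \eqref{e_lem_3_5} one has $|\nabla_\eta a_i|^{-1}\lsm\langle|\xi|+|\eta|\rangle^3/|\eta|$, $|a_i|^{-1}\lsm\langle|\xi|+|\eta|\rangle$, $|\partial_\eta(\nabla_\eta a_i/|\nabla_\eta a_i|^2)|\lsm|\nabla_\eta a_i|^{-2}\lsm\langle|\xi|+|\eta|\rangle^6/|\eta|^2$, and $|\nabla_\eta\psi(\tfrac{\eta\langle\xi\rangle^2}{\delta|\xi|})|\lsm\langle\xi\rangle^2/|\xi|$ supported on the annulus $|\eta|\sim\delta|\xi|/\langle\xi\rangle^2$. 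The worst resulting integrand is $\lsm |\eta|^{-2}\langle|\xi|+|\eta|\rangle^{7}\bigl(|h(\xi,\eta)|+|\nabla_\eta h(\xi,\eta)|\bigr)$, coming from the term where the derivative hits the singular vector field and $a_i^{-1}$ is kept; the remaining terms carry at most $|\eta|^{-1}$ (from $h/a_i$) or are localized to the small annulus (from $\nabla_\eta\psi$).

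Next I would insert $|h|+|\nabla_\eta h|\lsm\langle|\xi|+|\eta|\rangle^{-12}M$, $M:=\sup_{\tilde\xi,\tilde\eta}\langle|\tilde\xi|+|\tilde\eta|\rangle^{12}(|h|+|\nabla h|)$, which reduces the matter to integrating $|\eta|^{-2}\langle|\xi|+|\eta|\rangle^{-5}M$ (and the milder $|\eta|^{-1}\langle|\xi|+|\eta|\rangle^{-8}M$ and annulus pieces) over $|\eta|\gtrsim\delta|\xi|/\langle\xi\rangle^2$. Splitting at $|\eta|=1$: on $\delta|\xi|/\langle\xi\rangle^2\le|\eta|\le1$ one uses $\langle|\xi|+|\eta|\rangle^{-5}\le\langle\xi\rangle^{-5}$ and $\int|\eta|^{-2}\,d\eta\lsm\log\bigl(\langle\xi\rangle^2/(\delta|\xi|)\bigr)\lsm1+|\log|\xi||$ — this is where the logarithm appears; on $|\eta|\ge1$ the $|\eta|^{-2}$ is harmless at the origin but still provides decay at infinity, and $\int_{|\eta|\ge1}|\eta|^{-2}\langle|\xi|+|\eta|\rangle^{-5}\,d\eta\lsm\langle\xi\rangle^{-5}(1+|\log|\xi||)$ after a short radial computation (the $\log$ here arises only for $|\xi|\ge1$ from the range $1\le|\eta|\le|\xi|$). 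The $|\eta|^{-1}\langle|\xi|+|\eta|\rangle^{-8}$ term integrates to $\lsm\langle\xi\rangle^{-5}$ without any logarithm, and the $\nabla_\eta\psi$ term, on an annulus of area $\sim(\delta|\xi|/\langle\xi\rangle^2)^2$ where $\langle|\xi|+|\eta|\rangle\sim\langle\xi\rangle$, also contributes $\lsm\langle\xi\rangle^{-5}$. Collecting everything and the prefactor $\frac1t\lsm\langle t\rangle^{-1}$ for $t\ge1$ yields \eqref{e_lem_12_1}.

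I expect essentially all of the real work to be the bookkeeping: tracking the competing powers of $|\eta|$ (to stay integrable in two dimensions) against the powers of $\langle|\xi|+|\eta|\rangle$, confirming that the $\langle|\xi|+|\eta|\rangle^{-12}$ decay of $h$ absorbs the $\langle|\xi|+|\eta|\rangle^{7}$ weight with $\langle\xi\rangle^{-5}$ to spare, and verifying that the $\eta$-cutoff placed at scale $\delta|\xi|/\langle\xi\rangle^2$ converts the otherwise logarithmically divergent origin contribution into the harmless factor $1+|\log|\xi||$. There is no genuine analytic obstacle beyond this, since the phase is non-stationary throughout the region of integration by construction of the cutoff.
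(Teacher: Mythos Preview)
Your proposal is correct and follows essentially the same route as the paper: a single integration by parts in $\eta$ using the non-stationarity of the phase on the support of $1-\psi$, expansion of the resulting divergence via the product rule, and pointwise bounds from Lemma~\ref{lem_3} that identify the worst term as $\lsm |\eta|^{-2}\langle|\xi|+|\eta|\rangle^{7}(|h|+|\nabla_\eta h|)$, whose near-origin integral over $|\eta|\gtrsim\delta|\xi|/\langle\xi\rangle^{2}$ produces the $1+|\log|\xi||$ factor. The only cosmetic difference is that the paper absorbs one extra power $\langle\eta\rangle^{-1}$ before integrating (so the large-$|\eta|$ region contributes no logarithm), whereas you split at $|\eta|=1$ and pick up a harmless extra $\log\langle\xi\rangle$ from the range $1\le|\eta|\le|\xi|$; either way the final bound is the same.
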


\begin{proof}
 WOLOG we may assume $t\ge 1$. Since the function $1-\psi(\frac{\eta \langle \xi \rangle^2}{\delta |\xi|})$ is localized
to the region $\{ |\eta| \ge \frac{\delta |\xi|}{\langle \xi \rangle^2}$ and vanishes on the boundary, a simple
integration by parts gives us
 \begin{align}
 \text{LHS of \eqref{e_lem_12_1}} & = -\frac 1 {it}
 \int_{\R^2} \nabla_\eta \cdot \left(
 \frac{\nabla_\eta a_i(\xi,\eta)}{ |\nabla_\eta a_i(\xi,\eta)|^2} \cdot
 \frac{h(\xi,\eta)}{a_i(\xi,\eta)} \cdot ( 1- \psi ( \frac{\eta \langle \xi \rangle^2}{\delta |\xi|})) \right)
 e^{it a_i(\xi,\eta)} d\eta \notag \\
 &=: -\frac 1 {it} \int_{\R^2} G_1(\xi,\eta) e^{it a_i(\xi,\eta)} d\eta. \label{e_lem_12_2}
\end{align}
It only remains for us to bound $G_1(\xi,\eta)$.

By direct differentiation, we get
\begin{align}
 |G_1(\xi,\eta)| & \lsm \frac{|\Delta_\eta a_i(\xi,\eta)|}{|\nabla_\eta a_i(\xi,\eta)|^2}
\cdot \frac {|h(\xi,\eta)|} {|a_i(\xi,\eta)|} \cdot |1-\psi( \frac{\eta \langle \xi \rangle^2}{\delta |\xi|}) |
\label{e_lem_12_2a} \\
& \quad + \frac 1 {|\nabla_\eta a_i(\xi,\eta)|}
\cdot \frac{|\nabla_\eta h(\xi,\eta)|}{|a_i(\xi,\eta)|} \cdot |1-\psi(\frac{\eta \langle \xi \rangle^2}{\delta |\xi|})|
\label{e_lem_12_3} \\
& \quad + \frac{|h(\xi,\eta)|}{|a_i(\xi,\eta)|^2} \cdot |1-\psi(\frac{\eta \langle \xi \rangle^2}{\delta |\xi|})|
\label{e_lem_12_4} \\
 & \quad +\frac 1 {|\nabla_\eta a_i(\xi,\eta)|}
 \cdot \frac{|h(\xi,\eta)|}{|a_i(\xi,\eta)|}
 \cdot \frac{\langle \xi \rangle^2}{\delta |\xi|}
 \cdot | (\nabla \psi) (\frac{\eta \langle \xi \rangle^2}{\delta |\xi|} )|
 \label{e_lem_12_5}
\end{align}

By Lemma \ref{lem_3}, we have
\begin{align*}
 \eqref{e_lem_12_2a} & \lsm \frac 1 {|\nabla_\eta a_i(\xi,\eta)|^2}
\cdot \frac{|h(\xi,\eta)|}{|a_i(\xi,\eta)|} \cdot 1_{|\eta| \ge \frac{\delta |\xi|}{\langle \xi \rangle^2}} \\
&\lsm \frac{ \langle |\xi|+|\eta| \rangle^7}{|\eta|^2}
\cdot |h(\xi,\eta)| \cdot 1_{|\eta| \ge \frac{\delta |\xi|}{\langle \xi \rangle^2}}.
\end{align*}
Similarly we compute
\begin{align*}
 \eqref{e_lem_12_3} &\lsm \frac{\langle |\xi|+|\eta| \rangle^4}{|\eta|}
\cdot |\nabla_\eta h(\xi,\eta)| \cdot 1_{|\eta| \ge \frac{\delta |\xi|}{\langle \xi \rangle^2}}, \\
\eqref{e_lem_12_4} & \lsm \langle |\xi|+|\eta| \rangle^2 \cdot |h(\xi,\eta)| \cdot 1_{|\eta| \ge \frac{\delta |\xi|}{\langle \xi \rangle^2}}, \\
\eqref{e_lem_12_5} & \lsm \frac{\langle |\xi|+|\eta| \rangle^4}{|\eta|}
\cdot \frac{\langle \xi \rangle^2}{|\xi|}
\cdot |h(\xi,\eta)| \cdot 1_{|\eta|\sim \frac{\delta |\xi|}{\langle \xi \rangle^2}}.
\end{align*}
Collecting all the estimates, we get
\begin{align*}
 |G_1(\xi,\eta)| & \lsm \frac{\langle |\xi|+|\eta| \rangle^7}{|\eta|^2}
\cdot ( |h(\xi,\eta)|+|\nabla_\eta h(\xi,\eta)|) \cdot 1_{|\eta|\ge \frac{\delta |\xi|}{\langle \xi \rangle^2}} \\
&\lsm \frac 1 {|\eta|^2} \cdot \frac 1 {\langle \eta \rangle} \cdot \langle \xi \rangle^{-5}
\cdot 1_{|\eta| \ge \frac{\delta |\xi|}{\langle \xi \rangle^2}} \\
& \qquad \cdot \sup_{\tilde \xi,\tilde \eta \in \R^2}
\langle |\tilde \xi | + |\tilde \eta| \rangle^7
\cdot \langle \tilde \xi \rangle^5 \cdot ( |h(\tilde \xi,\tilde \eta) | + |\nabla_{\tilde \eta} h(\tilde \xi,\tilde \eta)|)
\\
& \lsm \frac 1{|\eta|^2} \cdot \frac 1{\langle \eta \rangle} \cdot \langle \xi\rangle^{-5}
\cdot 1_{|\eta| \ge \frac{\delta |\xi|}{\langle \xi \rangle^2}} \\
& \qquad \cdot  \sup_{\tilde \xi,\tilde \eta \in \R^2}
\langle |\tilde \xi | + |\tilde \eta| \rangle^{12}
 \cdot ( |h(\tilde \xi,\tilde \eta) | + |\nabla_{\tilde \eta} h(\tilde \xi,\tilde \eta)|).
\end{align*}
Plugging the last estimate into \eqref{e_lem_12_2} and integrating out $\eta$, we obtain
\eqref{e_lem_12_1}.
\end{proof}

\begin{lem} \label{lem_27}
Let $i=3,4$. Let $t\ge 1$ and $|\xi| \ge \frac 1t$. Then
\begin{align}
  &\left| \int_{\R^2} \frac {h(\xi,\eta)}{ a_i(\xi,\eta)} e^{it a_i(\xi,\eta)}   d\eta \right|  \notag \\
\lsm  &\frac 1 {t^2} \cdot \frac 1 {|\xi|^2}
\cdot \frac 1 {\langle \xi \rangle^5}
 \cdot \sup_{\tilde \xi, \tilde \eta \in \R^2}
\langle |\tilde \xi| + |\tilde \eta|\rangle^{21}
( |h(\tilde \xi,\tilde \eta)| + | (\partial_{\tilde \eta} h)(\tilde \xi, \tilde \eta) |
+ |(\partial^2_{\tilde \eta} h)(\tilde \xi,\tilde \eta) |).
\label{e_tmp_340}
\end{align}

\end{lem}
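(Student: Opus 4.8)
The plan is to exploit that, for $i=3,4$ and $\xi\ne 0$, the phase $a_i(\xi,\cdot)$ has \emph{no} critical point: by \eqref{e_lem_3_3} we have $|\nabla_\eta a_i(\xi,\eta)|\gtrsim |\xi|\,\langle|\xi|+|\eta|\rangle^{-3}>0$ for every $\eta$, so the vector field $b(\xi,\eta):=\nabla_\eta a_i(\xi,\eta)/|\nabla_\eta a_i(\xi,\eta)|^2$ is smooth in $\eta$ and the operator $Lf:=\tfrac1{it}\,b\cdot\nabla_\eta f$ reproduces the oscillation, $L e^{ita_i}=e^{ita_i}$. (This is precisely the structural difference from the case $i=1,2$, where the gradient vanishes at $\eta=0$ and the delicate stationary-phase analysis of Lemmas \ref{lem_10}--\ref{lem_12} is forced.) The supremum on the right of \eqref{e_tmp_340} — call it $M$ — forces $\sum_{|\alpha|\le 2}|\partial_\eta^\alpha h(\xi,\eta)|\lesssim M\langle|\xi|+|\eta|\rangle^{-21}$, so every quantity below decays rapidly in $\eta$ and integration by parts produces no boundary terms. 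Thus, integrating by parts twice,
\[
\int_{\R^2}\frac{h(\xi,\eta)}{a_i(\xi,\eta)}\,e^{ita_i(\xi,\eta)}\,d\eta
=\frac1{(it)^2}\int_{\R^2}(L^{*})^{2}\!\Bigl(\frac{h(\xi,\eta)}{a_i(\xi,\eta)}\Bigr)e^{ita_i(\xi,\eta)}\,d\eta,
\qquad L^{*}g:=-\nabla_\eta\!\cdot(b\,g).
\]

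The next step is to bound $(L^{*})^{2}(h/a_i)$ pointwise by expanding with the Leibniz and quotient rules and inserting the estimates of Lemma \ref{lem_3}. The relevant inputs are $|a_i|^{-1}\lesssim\langle|\xi|+|\eta|\rangle$ and $|\partial_\eta^k a_i|\lesssim 1$ (from \eqref{e_lem_3_1}, \eqref{e_lem_3_5}), together with $|b|\lesssim\langle|\xi|+|\eta|\rangle^{3}/|\xi|$, $|\partial_\eta b|\lesssim\langle|\xi|+|\eta|\rangle^{6}/|\xi|$ and $|\partial_\eta^2 b|\lesssim\langle|\xi|+|\eta|\rangle^{9}/|\xi|$ (from \eqref{e_lem_3_3}, \eqref{e_lem_3_9}, \eqref{e_lem_3_11}). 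Each application of $L^{*}$ contributes exactly one factor $1/|\xi|$ and raises the power of $\langle|\xi|+|\eta|\rangle$ by a bounded amount; writing $\nabla_\eta(1/a_i)=-a_i^{-2}\nabla_\eta a_i$ and tracking the worst case (both derivatives falling on $b$, or falling through the $a_i^{-2},a_i^{-3}$ factors) yields
\[
\Bigl|(L^{*})^{2}\!\Bigl(\tfrac{h}{a_i}\Bigr)(\xi,\eta)\Bigr|
\lesssim\frac{\langle|\xi|+|\eta|\rangle^{13}}{|\xi|^{2}}\sum_{|\alpha|\le 2}\bigl|\partial_\eta^\alpha h(\xi,\eta)\bigr|.
\]

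Finally, I would substitute $\sum_{|\alpha|\le2}|\partial_\eta^\alpha h(\xi,\eta)|\le M\langle|\xi|+|\eta|\rangle^{-21}$ and integrate in $\eta$. Since $13-21=-8$, the integral $\int_{\R^2}\langle|\xi|+|\eta|\rangle^{-8}\,d\eta$ converges, and splitting it into the regions $|\eta|\le|\xi|$ and $|\eta|>|\xi|$ bounds it by $\langle\xi\rangle^{-6}\le\langle\xi\rangle^{-5}$. Collecting the $1/t^2$ from the two integrations by parts with $1/|\xi|^2$ and $\langle\xi\rangle^{-5}$ reproduces \eqref{e_tmp_340}. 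The only genuinely delicate point is the Leibniz-rule bookkeeping in the middle step — verifying that no application of $L^{*}$ costs more than one power of $1/|\xi|$ and that the accumulated power $13$ of $\langle|\xi|+|\eta|\rangle$ stays comfortably below the available weight $21$; everything else is elementary, and in particular, because $i=3,4$, no stationary-phase input is needed here. Incidentally, the restriction $|\xi|\ge1/t$ is not used in the argument itself; it only guarantees that the resulting bound is non-trivial, since it makes $1/(t^2|\xi|^2)\le1$.
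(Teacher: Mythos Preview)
Your proposal is correct and follows essentially the same route as the paper: two integrations by parts using the vector field $b=\nabla_\eta a_i/|\nabla_\eta a_i|^2$ (legitimate for $i=3,4$ since the phase has no critical point), followed by a Leibniz-rule expansion of $(L^*)^2(h/a_i)$ bounded term-by-term via Lemma \ref{lem_3}, with the worst contribution carrying the power $\langle|\xi|+|\eta|\rangle^{13}/|\xi|^2$ exactly as you obtain. The paper organizes the final step slightly differently --- it absorbs the weight $\langle|\xi|+|\eta|\rangle^{21}$ into the supremum $M$ before integrating, arriving at a pointwise bound $|G_2|\lesssim M\,|\xi|^{-2}\langle\xi\rangle^{-5}\langle\eta\rangle^{-3}$ and then integrating $\langle\eta\rangle^{-3}$ --- but this is just bookkeeping and yields the same conclusion as your $\int\langle|\xi|+|\eta|\rangle^{-8}\,d\eta\lesssim\langle\xi\rangle^{-6}$.
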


\begin{proof}
 By using integration by parts twice, we get
\begin{align}
 & \int_{\R^2} \frac {h(\xi,\eta)}{ a_i(\xi,\eta)} e^{it a_i(\xi,\eta)}   d\eta \notag \\
= & -\frac 1 {t^2} \int_{\R^2}
\nabla_\eta \cdot \left( \frac {\nabla_\eta a_i(\xi,\eta)}{|\nabla_\eta a_i(\xi,\eta)|^2}
\nabla_\eta \cdot ( \frac{\nabla_\eta a_i(\xi,\eta)} {|\nabla_\eta a_i(\xi,\eta)|^2}
\frac{h(\xi,\eta)}{a_i(\xi,\eta)}) \right) e^{ita_i(\xi,\eta)} d\eta \notag \\
=: & -\frac 1 {t^2} \int_{\R^2} G_2(\xi,\eta) e^{ita_i(\xi,\eta)} d\eta. \label{e_tmp_341}
\end{align}
It only remains for us to bound $G_2(\xi,\eta)$.  By direct differentaion and discarding
inessential constants, we have
\begin{align}
 |G_2(\xi,\eta)| & \lsm \left| \partial_\eta \left( \nai \right) \right|
\cdot \left| \partial_\eta \left( \nai \right) \right|
\cdot \frac{|h(\xi,\eta)|}{\ai} \label{e_305_1} \\
& \quad+ \left| \partial_\eta \left( \nai \right) \right| \cdot \frac 1 {\gai}
\cdot \left| \partial_\eta \left( \frac{ h(\xi,\eta)} {\ai} \right) \right| \label{e_305_2} \\
& \quad + \frac 1 {\gai} \cdot \left| \partial_\eta^2 \left( \nai \right) \right| \cdot
\frac {|h(\xi,\eta)|} {\ai} \label{e_305_3} \\
& \quad + \frac 1 {\gai} \cdot \left| \partial_\eta \left( \nai \right) \right| \cdot \left|
\partial_\eta \left( \frac{ h(\xi,\eta)}{\ai} \right) \right|
\label{e_305_4} \\
& \quad +\frac 1 {\gai} \cdot \frac 1 {\gai} \cdot \left|
\partial_\eta^2 \left( \frac{h(\xi,\eta)}{\ai} \right) \right|.
\label{e_305_5}
\end{align}
For \eqref{e_305_1}, we use Lemma \ref{lem_3} to bound it as
\begin{align*}
 \eqref{e_305_1} & \lsm \frac{\langle |\xi| + |\eta| \rangle^{13} } {|\xi|^2} \cdot |h(\xi,\eta)| \\
& \lsm \frac 1 {|\xi|^2} \cdot \frac 1 {\langle \xi \rangle^5}
\cdot \frac 1 {\langle \eta \rangle^3}
\cdot \sup_{\tilde \xi, \tilde \eta \in \R^2} \langle |\tilde \xi| + |\tilde \eta|\rangle^{21}
|h(\tilde \xi,\tilde \eta) |.
\end{align*}
Similarly
\begin{align}
 \eqref{e_305_2} & \lsm \left| \partial_\eta \left( \nai \right) \right| \cdot \frac 1 {|\ai|^2}
\cdot |h(\xi,\eta)| \notag \\
& \quad + \left|\partial_\eta \left( \nai \right) \right| \cdot \frac 1 {|\ai|}
\cdot \frac 1 {\gai} \cdot | \partial_\eta h(\xi,\eta)| \notag \\
& \lsm \frac{\langle |\xi|+|\eta| \rangle^8}{|\xi|} |h(\xi,\eta)|
+ \frac{\langle |\xi|+|\eta|\rangle^6}{|\xi|}
\cdot \langle |\xi|+|\eta| \rangle \cdot \frac{\langle |\xi|+|\eta| \rangle^3}{|\xi|} \cdot |(\partial_\eta h)(\xi,\eta)|
 \notag \\
& \lsm \frac 1{|\xi|^2} \cdot \frac 1 {\langle \xi \rangle^5} \cdot \frac 1 {\langle \eta \rangle^3}
\cdot \sup_{\tilde \xi, \tilde \eta \in \R^2} \langle |\tilde \xi| + |\tilde \eta|\rangle^{21}
(|h(\tilde \xi,\tilde \eta) |+|(\partial_{\tilde \eta} h)(\tilde \xi,\tilde \eta) |).
\notag
\end{align}
The computation of \eqref{e_305_3}--\eqref{e_305_4} is similar to \eqref{e_305_2}, and we have
\begin{align*}
 \eqref{e_305_3}+\eqref{e_305_4} \lsm \frac 1{|\xi|^2} \cdot \frac 1 {\langle \xi \rangle^5} \cdot \frac 1 {\langle \eta \rangle^3}
\cdot \sup_{\tilde \xi, \tilde \eta \in \R^2} \langle |\tilde \xi| + |\tilde \eta|\rangle^{21}
(|h(\tilde \xi,\tilde \eta) |+|(\partial_{\tilde \eta} h)(\tilde \xi,\tilde \eta) |).
\end{align*}
Finally for \eqref{e_305_5}, we note that by Lemma \ref{lem_3},
\begin{align*}
 \left| \partial_\eta^2 \left( \frac{h(\xi,\eta)}{\ai} \right) \right|
& \lsm | (\partial_\eta^2 h)(\xi,\eta)| +\frac{ |(\partial_\eta h)(\xi,\eta)| \cdot |\partial_\eta a_i(\xi,\eta)|}
{a_i(\xi,\eta)^2}  \\
& \quad + |h(\xi,\eta)| \cdot ( \frac{|\partial_\eta^2 a_i(\xi,\eta)|}{a_i(\xi,\eta)^2} +
\frac{|\partial_\eta a_i(\xi,\eta)|^2} {a_i(\xi,\eta)^3} ) \\
& \lsm \langle |\xi|+|\eta| \rangle^3 (|h(\xi,\eta)|+|(\partial_\eta h)(\xi,\eta)|+|(\partial_\eta^2 h)(\xi,\eta)|).
\end{align*}
Therefore
\begin{align*}
 \eqref{e_305_5} & \lsm  \frac{\langle |\xi|+|\eta| \rangle^9} {|\xi|^2} \cdot
(|h(\xi,\eta)|+|(\partial_\eta h)(\xi,\eta)|+|(\partial_\eta^2 h)(\xi,\eta)|) \\
& \lsm \frac 1{|\xi|^2} \cdot \frac 1 {\langle \xi \rangle^5} \cdot \frac 1 {\langle \eta \rangle^3}
\cdot \sup_{\tilde \xi, \tilde \eta \in \R^2} \langle |\tilde \xi| + |\tilde \eta|\rangle^{21}
(|h(\tilde \xi,\tilde \eta) |+|(\partial_{\tilde \eta} h)(\tilde \xi,\tilde \eta) |+
|(\partial_{\tilde \eta}^2 h)(\tilde \xi,\tilde \eta)|).
\end{align*}
Collecting all the estimates, we obtain
\begin{align*}
|G_2(\xi,\eta)| \lsm \frac 1{|\xi|^2} \cdot \frac 1 {\langle \xi \rangle^5} \cdot \frac 1 {\langle \eta \rangle^3}
\cdot \sup_{\tilde \xi, \tilde \eta \in \R^2} \langle |\tilde \xi| + |\tilde \eta|\rangle^{21}
(|h(\tilde \xi,\tilde \eta) |+|(\partial_{\tilde \eta} h)(\tilde \xi,\tilde \eta) |+
|(\partial_{\tilde \eta}^2 h)(\tilde \xi,\tilde \eta)|).
\end{align*}
Plugging this last estimate into RHS of \eqref{e_tmp_341} and integrating out $\eta$, we obtain
\eqref{e_tmp_340}. The lemma is proved.
\end{proof}
We are now ready to complete the
\begin{proof}[Proof of Proposition \ref{prop_51}]
Assume $h=h(\xi,\eta)$ is such that the RHS of \eqref{e_tm_128} is finite. By Lemma \ref{lem_102},
we have for all $1\le i\le 4$,
\begin{align*}
 & \left \| \langle \xi \rangle^2 \langle t \rangle \int_t^\infty
\int_{\R^2} e^{is a_i(\xi,\eta)} h(\xi,\eta) d\eta ds \right\|_{L_\xi^2} \\
= & \left \| \langle \xi \rangle^2 \langle t \rangle
\int_{\R^2} \frac {e^{it a_i(\xi,\eta)}} {a_i(\xi,\eta)} h(\xi,\eta) d\eta \right\|_{L_\xi^2}.
\end{align*}
Consider first $i=1,2$. Let $\psi \in C_c^\infty(\R^2)$ be such that
\begin{align*}
 \psi(x) =
\begin{cases}
 1,\quad |x|\le 1,\\
0,\quad |x| \ge 2.
\end{cases}
\end{align*}
Let $0<\delta<\frac 1{100}$. Then we decompose
\begin{align}
 \int_{\R^2} \frac{e^{ita_i(\xi,\eta)}} {a_i(\xi,\eta)} h(\xi,\eta) d\eta &
= \int_{\R^2} \frac{h(\xi,\eta)}{a_i(\xi,\eta)} (1-\psi(\frac{\eta \langle \xi \rangle^2}{\delta |\xi|} ))
e^{it a_i(\xi,\eta)} d\eta \label{e_51_a} \\
& \quad+ \int_{|\eta| \le \frac{2\delta |\xi|}{\langle \xi \rangle^2}}
\left( \frac{h(\xi,\eta)}{a_i(\xi,\eta)} \psi(\frac{\eta \langle \xi \rangle^2}{\delta |\xi|} )
-\frac{h(\xi,0)}{a_i(\xi,0)} \right) e^{it a_i(\xi,\eta)} d\eta \label{e_51_b} \\
& \quad + \int_{|\eta| \le \frac{2\delta |\xi|}{\langle \xi \rangle^2}}
e^{it a_i(\xi,\eta)} d\eta \cdot \frac{h(\xi,0)}{a_i(\xi,0)}. \label{e_51_c}
\end{align}
For \eqref{e_51_a}, we use Lemma \ref{lem_12} and this gives us
\begin{align*}
&\left\| \langle \xi \rangle^2 \langle t \rangle
\int_{\R^2} \frac{h(\xi,\eta)}{a_i(\xi,\eta)} (1-\psi(\frac{\eta \langle \xi \rangle^2}{\delta |\xi|} ))
e^{it a_i(\xi,\eta)} d\eta  \right\|_{L_\xi^2 } \\
\lsm & \sup_{\tilde \xi, \tilde \eta \in \R^2} \langle |\tilde \xi| + |\tilde \eta| \rangle^{12}
\cdot ( |h(\tilde \xi,\tilde \eta)| + |\nabla_{\tilde \eta} h(\tilde \xi,\tilde \eta) |)
\cdot \| (1+|\log |\xi|) \cdot \langle \xi \rangle^{-3} \|_{L_\xi^2} \\
\lsm & \sup_{\tilde \xi, \tilde \eta \in \R^2} \langle |\tilde \xi| + |\tilde \eta| \rangle^{12}
\cdot ( |h(\tilde \xi,\tilde \eta)| + |\nabla_{\tilde \eta} h(\tilde \xi,\tilde \eta) |).
\end{align*}
Similarly for \eqref{e_51_b}, we use Lemma \ref{lem_11} to compute
\begin{align*}
&\left\| \langle \xi \rangle^2 \langle t \rangle
 \int_{|\eta| \le \frac{2\delta |\xi|}{\langle \xi \rangle^2}}
\left( \frac{h(\xi,\eta)}{a_i(\xi,\eta)} \psi(\frac{\eta \langle \xi \rangle^2}{\delta |\xi|} )
-\frac{h(\xi,0)}{a_i(\xi,0)} \right) e^{it a_i(\xi,\eta)} d\eta
\right\|_{L_\xi^2 } \\
\lsm & \left\| \langle \xi \rangle^{11} \sup_{|\tilde \eta| \le \frac{2\delta |\xi|}{\langle \xi \rangle^2}}
(|h(\xi,\tilde \eta)| + |\nabla_{\tilde \eta} h(\xi,\tilde \eta)|) \right \|_{L_\xi^2} \\
\lsm &
\sup_{\tilde \xi, \tilde \eta \in \R^2} \langle |\tilde \xi| + |\tilde \eta| \rangle^{14}
\cdot ( |h(\tilde \xi,\tilde \eta)| + |\nabla_{\tilde \eta} h(\tilde \xi,\tilde \eta) |) \cdot
\| \langle \xi \rangle^{-3} \|_{L_\xi^2} \\
\lsm & \sup_{\tilde \xi, \tilde \eta \in \R^2} \langle |\tilde \xi| + |\tilde \eta| \rangle^{14}
\cdot ( |h(\tilde \xi,\tilde \eta)| + |\nabla_{\tilde \eta} h(\tilde \xi,\tilde \eta) |).
\end{align*}
For \eqref{e_51_c}, we apply Lemma \ref{lem_10}, Lemma \ref{lem_3} and obtain
\begin{align*}
 &\left\| \langle \xi \rangle^2 \langle t \rangle
\int_{|\eta| \le \frac{2\delta |\xi|}{\langle \xi \rangle^2}}
e^{it a_i(\xi,\eta)} d\eta \cdot \frac{h(\xi,0)}{a_i(\xi,0)}
\right\|_{L_\xi^2 } \\
\lsm & \left\| \langle \xi \rangle^9 \cdot \frac{h(\xi,0)}{a_i(\xi,0)} \right \|_{L_\xi^2} \\
\lsm & \| \langle \xi \rangle^{10} h(\xi,0) \|_{L_\xi^2} \\
\lsm & \sup_{\tilde \xi, \tilde \eta \in \R^2} \langle |\tilde \xi| + |\tilde \eta| \rangle^{14}
\cdot ( |h(\tilde \xi,\tilde \eta)| + |\nabla_{\tilde \eta} h(\tilde \xi,\tilde \eta) |).
\end{align*}
Therefore we have shown that in the case $i=1,2$, the contribution of $a_i$ is bounded by the RHS
of \eqref{e_tm_128}. Next we consider $i=3,4$. If $t\le 1$, then by Lemma \ref{lem_3}, we have
\begin{align*}
 &\left \| \langle \xi \rangle^2 \langle t \rangle \int_{\R^2} \frac{h(\xi,\eta)}{a_i(\xi,\eta)}
e^{it a_i(\xi,\eta)} d\eta \right\|_{L_\xi^2} \\
\lsm & \left \| \langle \xi \rangle^2 \int_{\R^2} \langle |\xi|+|\eta| \rangle |h(\xi,\eta)| d\eta
\right\|_{L_\xi^2} \\
\lsm & \sup_{\tilde \xi, \tilde \eta \in \R^2} \langle |\tilde \xi| + |\tilde \eta| \rangle^{12}
|h(\tilde \xi,\tilde \eta)|
\end{align*}
which is acceptable. If $t\ge 1$ and $|\xi| \le \frac 1t$, then we also have
\begin{align*}
 &\left \| \langle \xi \rangle^2 \langle t \rangle \int_{\R^2} \frac{h(\xi,\eta)}{a_i(\xi,\eta)}
e^{it a_i(\xi,\eta)} d\eta \right\|_{L_\xi^2(|\xi|\le \frac 1 t)} \\
\lsm & \left \|  \int_{\R^2} \langle |\xi|+|\eta| \rangle |h(\xi,\eta)| d\eta
\right\|_{L_\xi^\infty} \\
\lsm & \sup_{\tilde \xi, \tilde \eta \in \R^2} \langle |\tilde \xi| + |\tilde \eta| \rangle^{12}
|h(\tilde \xi,\tilde \eta)|.
\end{align*}
It remains for us to consider the case $t\ge 1$ and $|\xi| \ge \frac 1 t$. By Lemma \ref{lem_27},
we compute
\begin{align*}
 &\left \| \langle \xi \rangle^2 \langle t \rangle \int_{\R^2} \frac{h(\xi,\eta)}{a_i(\xi,\eta)}
e^{it a_i(\xi,\eta)} d\eta \right\|_{L_\xi^2} \\
\lsm &  \sup_{\tilde \xi, \tilde \eta \in \R^2}
\langle |\tilde \xi| + |\tilde \eta|\rangle^{21}
( |h(\tilde \xi,\tilde \eta)| + | (\partial_{\tilde \eta} h)(\tilde \xi, \tilde \eta) |
+ |(\partial^2_{\tilde \eta} h)(\tilde \xi,\tilde \eta) |) \\
& \quad \cdot \frac 1 t \cdot
\left\| \frac 1 {|\xi|^2} \cdot \frac 1 {\langle \xi \rangle^3} \right \|_{L_\xi^2(|\xi|\ge \frac 1t)} \\
\lsm & \sup_{\tilde \xi, \tilde \eta \in \R^2}
\langle |\tilde \xi| + |\tilde \eta|\rangle^{21}
( |h(\tilde \xi,\tilde \eta)| + | (\partial_{\tilde \eta} h)(\tilde \xi, \tilde \eta) |
+ |(\partial^2_{\tilde \eta} h)(\tilde \xi,\tilde \eta) |).
\end{align*}
This finishes the case $i=3,4$.
\end{proof}


\section{Solvability of the iteration system \eqref{eq10a}--\eqref{eq10b}} \label{sec_solv}
Since the RHS of \eqref{eq10a}--\eqref{eq10b} generally contains second order
derivatives of $(\bar u^{(n)}, \bar \vv^{(n)})$, the solvability of the system
is not immediately a trivial matter. However, due to the time cutoff function
$\chi_n(t)$ which vanishes for $t\ge 2^{n+1}$, these troublesome terms involving
$(\bar u^{(n)}, \bar \vv^{(n)})$ also drops out for $t\ge 2^{n+1}$. Our strategy
then is to solve first \eqref{eq10a}--\eqref{eq10b} for $t\ge 2^{n+1}$. In this regime
we can make use of the explicit representation of the solution in terms of the RHS of
\eqref{eq10a}--\eqref{eq10b} which only involve $(\Phi_1,\Phi_2,h_1,h_2,\bar u^{(n-1)}, \bar \vv^{(n-1)})$.
We then solve \emph{backwards} the same system for $t\le 2^{n+1}$ with initial
data $(\bar u^{(n)}(2^{n+1}), \bar \vv^{(n)} (2^{n+1}))$. In the latter case, the time
interval is of finite length and the solvability is rather obvious since it is a linear equation
for $(\bar u^{(n)}, \bar \vv^{(n)})$.

More precisely, we inductively assume that
\begin{align}
\| \bar u^{(n-1)} (t) \|_{H^m} + \| (\bar u^{(n-1)} (t) )^\prime \|_{H^m}
+ \| \bar \vv^{(n-1)}(t) \|_{H^m} \notag \\
 + \| ( \bar \vv^{(n-1)} (t) )^\prime \|_{H^m}
\lsm \frac {\epsilon}{\langle t \rangle}. \label{e33}
\end{align}

Then for $t\ge 2^{n+1}$, we get

\begin{align}
(\square +1) \bar u^{(n)} & = \nabla \cdot \Bigl( (\vv^{(n-1)} \cdot \nabla) \Phi_2 + u^{(n-1)} \nabla \Phi_1 \Bigr) \notag \\
& \quad - \partial_t \Bigl( (\vv^{(n-1)} \cdot \nabla) \Phi_1 + u^{(n-1)} \nabla \cdot \Phi_2 \Bigr) \notag \\
& \quad + \nabla \cdot \Bigl(
\bigl( (\bar \vv^{(n-1)} + \Phi_2) \cdot \nabla \bigr) h_2 + (\bar u^{(n-1)} + \Phi_1) \nabla h_1 \Bigr) \notag \\
& \quad - \partial_t
\Bigl( \bigl( (\bar v^{(n-1)} + \Phi_2 ) \cdot \nabla \bigr) h_1 + ( \bar u^{(n-1)} + \Phi_1 ) \nabla \cdot h_2 \Bigr), \label{e34}
\end{align}

and
\begin{align}
(\square +1) \bar \vv^{(n)} & =\nabla \Bigl( (\vv^{(n-1)} \cdot \nabla) \Phi_1 + u^{(n-1)} \nabla \cdot \Phi_2 \Bigr) \notag \\
& \quad -\partial_t \Bigl( (\vv^{(n-1)} \cdot \nabla ) \Phi_2 + u^{(n-1)} \nabla \Phi_1 \Bigr) \notag \\
& \quad + \nabla \Bigl( (\bar \vv^{(n-1)} + \Phi_2) \cdot \nabla h_1 + ( \bar u^{(n-1)} + \Phi_1) \nabla \cdot h_2 \Bigr) \notag \\
& \quad - \partial_t \Bigl(
\bigl( (\bar \vv^{(n-1)} + \Phi_2) \cdot \nabla \bigr) h_2 + (\bar u^{(n-1)} + \Phi_1) \nabla h_1 \Bigr) \notag \\
& \qquad - \nabla \Delta^{-1} \nabla \cdot( u^{(n-1)} \vv^{(n-1)} ). \label{eq344}
\end{align}

Observe that the RHS of \eqref{e34} does not contain $\bar u^{(n)}$,
therefore we have the explicit representation
\begin{align}
\bar u^{(n)}(t) = \int_\infty^t \frac {\sin (t-s)\jnab} {\jnab} \tilde G_1(\Phi_1,\Phi_2,h_1,h_2,\bar u^{(n-1)}, & \bar \vv^{(n-1)} )(s) ds,
\label{e40} \\
& \quad \text{for $t\ge 2^{n+1}$}, \notag
\end{align}
where
\begin{align}
&\quad \tilde G_1(\Phi_1,\Phi_2,h_1,h_2,\bar u^{(n-1)},  \bar \vv^{(n-1)} ) \notag \\
& = \nabla \cdot \Bigl( (\vv^{(n-1)} \cdot \nabla) \Phi_2 + u^{(n-1)} \nabla \Phi_1 \Bigr) \label{e35a} \\
& \quad - \partial_t \Bigl( (\vv^{(n-1)} \cdot \nabla) \Phi_1 + u^{(n-1)} \nabla \cdot \Phi_2 \Bigr) \label{e35b} \\
& \quad + \nabla \cdot \Bigl(
\bigl( (\bar \vv^{(n-1)} + \Phi_2) \cdot \nabla \bigr) h_2 + (\bar u^{(n-1)} + \Phi_1) \nabla h_1 \Bigr) \label{e35c} \\
& \quad - \partial_t
\Bigl( \bigl( (\bar v^{(n-1)} + \Phi_2 ) \cdot \nabla \bigr) h_1 + ( \bar u^{(n-1)} + \Phi_1 ) \nabla \cdot h_2 \Bigr). \label{e35d}
\end{align}

Similarly for $\bar \vv^{(n)}(t)$ we have
\begin{align}
\bar \vv^{(n)}(t) = \int_\infty^t \frac {\sin (t-s)\jnab} {\jnab} \tilde G_2(\Phi_1,\Phi_2,h_1,h_2,\bar u^{(n-1)}, & \bar \vv^{(n-1)} )(s) ds,
\label{e41} \\
& \quad \text{for $t\ge 2^{n+1}$}, \notag
\end{align}
where
\begin{align}
&\quad \tilde G_2(\Phi_1,\Phi_2,h_1,h_2,\bar u^{(n-1)},  \bar \vv^{(n-1)} ) \notag \\
&= \nabla \Bigl( (\vv^{(n-1)} \cdot \nabla) \Phi_1 + u^{(n-1)} \nabla \cdot \Phi_2 \Bigr) \notag \\
& \quad -\partial_t \Bigl( (\vv^{(n-1)} \cdot \nabla ) \Phi_2 + u^{(n-1)} \nabla \Phi_1 \Bigr) \notag \\
& \quad + \nabla \Bigl( (\bar \vv^{(n-1)} + \Phi_2) \cdot \nabla h_1 + ( \bar u^{(n-1)} + \Phi_1) \nabla \cdot h_2 \Bigr) \notag \\
& \quad - \partial_t \Bigl(
\bigl( (\bar \vv^{(n-1)} + \Phi_2) \cdot \nabla \bigr) h_2 + (\bar u^{(n-1)} + \Phi_1) \nabla h_1 \Bigr) \notag \\
& \qquad - \nabla \Delta^{-1} \nabla \cdot( u^{(n-1)} \vv^{(n-1)} ).
\end{align}

For the function $(\Phi_1,\Phi_2)$, we have
\begin{align}
\| \Phi_1 (s) \|_{H^{m+2}} + \|\Phi_2(s) \|_{H^{m+2}} + \| ( \Phi_1 (s) )^\prime \|_{H^{m+1}} \notag \\
+\| ( \Phi_2(s) )^\prime \|_{H^{m+1}} \lsm \frac {\epsilon} {\langle s \rangle},\quad \forall\, s\ge 0. \label{e33a}
\end{align}

Also for $(h_1,h_2)$, we have the estimate
\begin{align}
\|h_1(s) \|_{L_x^\infty} + \| \partial_s h_1(s) \|_{L_x^\infty} + \| D^{m+2} h_1(s) \|_{L_x^\infty} \notag \\
+ \| \partial_s D^{m+1} h_1(s) \|_{L_x^\infty} \lsm \frac {\epsilon}{\langle s \rangle}, \quad \forall\, s\ge 0. \label{e33b} \\
\|h_2(s) \|_{L_x^\infty} + \| \partial_s h_2(s) \|_{L_x^\infty} + \| D^{m+2} h_2(s) \|_{L_x^\infty} \notag \\
+ \| \partial_s D^{m+1} h_2(s) \|_{L_x^\infty} \lsm \frac {\epsilon}{\langle s \rangle}, \quad \forall\, s\ge 0. \label{e33c}
\end{align}

Recall that
\begin{align*}
u^{(n-1)} &= \bar u^{(n-1)} + \Phi_1 +h_1, \\
\vv^{(n-1)} & = \bar \vv^{(n-1)} + \Phi_2 +h_2.
\end{align*}

Therefore by \eqref{e33}, \eqref{e33a}--\eqref{e33c}, we have
\begin{align*}
& \| \nabla \cdot  \Bigl ( (\vv^{(n-1)} (s) \cdot \nabla ) \Phi_2(s) + u^{(n-1)}(s) \nabla \Phi_1(s) \Bigr) \|_{H^m} \\
\lsm & \| \nabla \cdot \Bigl( (\bar \vv^{(n-1)} (s) \cdot \nabla )\Phi_2(s) \Bigr) \|_{H^m} + \| \nabla \cdot ( \bar u^{(n-1)}(s) \nabla \Phi_1(s) ) \|_{H^m} \\
& \quad + \| \nabla \cdot \Bigl( (h_2(s) \cdot \nabla)  \Phi_2(s) \Bigr) \|_{H^m} + \| \nabla \cdot (h_1(s) \nabla \Phi_1(s) ) \|_{H^m} \\
& \quad + \| \nabla \cdot \Bigl( (\Phi_2(s) \cdot \nabla)\Phi_2(s) \Bigr) \|_{H^m} + \| \nabla \cdot ( \Phi_1(s) \nabla \Phi_1(s) ) \|_{H^m} \\
\lsm & \| ( \bar \vv^{(n-1)} (s) )^\prime \|_{H^m} \cdot \| \Phi_2(s) \|_{H^{m+2}} + \| ( \bar u^{(n-1)}(s))^\prime \|_{H^m} \cdot \| \Phi_1(s) \|_{H^{m+2}} \\
& \quad +\Bigl( \| h_2(s) \|_{L_x^\infty} + \| D^{m+1} h_2(s) \|_{L_x^\infty} \Bigr) \cdot \| \Phi_2(s) \|_{H^{m+2}} \\
& \quad + \Bigl( \|h_1(s) \|_{L_x^\infty} + \|D^{m+1} h_1(s) \|_{L_x^\infty} \Bigr) \cdot \|\Phi_1(s) \|_{H^{m+2}} \\
& \qquad + \| \Phi_2(s) \|^2_{H^{m+2}} + \| \Phi_1(s) \|^2_{H^{m+2}} \\
\lsm & \; \frac {\epsilon^2} {\langle s \rangle^2}.
\end{align*}

The remaining terms \eqref{e35b}--\eqref{e35d} can be estimated in a similar fashion. Collecting the estimates, we get
\begin{align*}
\|\tilde G_1(\Phi_1,\Phi_2,h_1,h_2,\bar u^{(n-1)},  \bar \vv^{(n-1)} )(s) \|_{H^m} \lsm \frac {\epsilon^2} {\langle s \rangle^2}.
\end{align*}

By \eqref{e40}, we then compute
\begin{align*}
\| \partial_t \bar u^{(n)} (t) \|_{H^m} & \lsm \int_t^\infty \| \tilde G_1(\Phi_1,\Phi_2,h_1,h_2,\bar u^{(n-1)},  \bar \vv^{(n-1)} )  (s) \|_{H^m} ds \\
& \lsm \int_t^\infty \frac {\epsilon^2} {\langle s \rangle^2} ds \\
& \lsm \frac {\epsilon^2} {\langle t \rangle}, \quad \forall\, t\ge 2^{n+1}.
\end{align*}

Similarly
\begin{align*}
\| \bar u^{(n)} (t) \|_{H^m} + \| \partial_{x_i} \bar u^{(n)} (t) \|_{H^m} \lsm \frac {\epsilon^2} {\langle t \rangle}, \quad\forall\, t\ge 2^{n+1}.
\end{align*}

Therefore
\begin{align}
\| \bar u^{(n)} (t) \|_{H^m} + \| (\bar u^{(n)} (t) )^\prime \|_{H^m} \lsm \frac {\epsilon^2} {\langle t \rangle}, \quad\forall\, t\ge 2^{n+1}.
\label{e47a}
\end{align}

By using similar estimates, we obtain from \eqref{e41}
\begin{align}
\| \bar \vv^{(n)} (t) \|_{H^m} + \| (\bar \vv^{(n)} (t) )^\prime \|_{H^m} \lsm \frac {\epsilon^2} {\langle t \rangle}, \quad \forall\, t\ge 2^{n+1}.
\label{e47b}
\end{align}

The estimates \eqref{e47a}--\eqref{e47b} are consistent with the inductive assumption \eqref{e33} on the time interval $[2^{n+1},\infty)$.
However we still have to solve \eqref{eq10a}--\eqref{eq10b} for $0\le t \le 2^{n+1}$. Since we have obtained the solution on
the time interval $[2^{n+1},\infty)$, we can use $(\bar u^{(n)}(2^{n+1}), \bar \vv^{(n)}(2^{n+1}))$ as initial data which is explicitly
given by \eqref{e40}--\eqref{e41}, i.e.
\begin{align*}
\bar u^{(n)}(2^{n+1}) &= \int_\infty^{2^{n+1}} \frac {\sin (2^{n+1}-s)\jnab} {\jnab} \tilde G_1(\Phi_1,\Phi_2,h_1,h_2,\bar u^{(n-1)},  \bar \vv^{(n-1)} )(s) ds,\\
\bar \vv^{(n)}(2^{n+1}) &= \int_\infty^{2^{n+1}} \frac {\sin (2^{n+1}-s)\jnab} {\jnab} \tilde G_2(\Phi_1,\Phi_2,h_1,h_2,\bar u^{(n-1)},  \bar \vv^{(n-1)} )(s) ds.
\end{align*}

Using this information, we then
solve \emph{backwards} \eqref{eq10a}--\eqref{eq10b} until $t=0$. This way we have obtained the solution $(\bar u^{(n)}(t), \bar \vv^{(n)}(t))$
well defined
for all $t\ge 0$. Furthermore since $[0,2^{n+1}]$ is a finite time interval and $(\bar u^{(n)}(t), \bar \vv^{(n)}(t))$ enjoys the decay estimate
\eqref{e47a}--\eqref{e47b}, we have
\begin{align}
\| \bar u^{(n)} (t) \|_{H^m} &+ \| \bar \vv^{(n)}(t) \|_{H^m} + \| ( \bar u^{(n)}(t) )^\prime \|_{H^m} \notag \\
& \quad \| (\bar \vv^{(n)} (t) )^\prime \|_{H^m} \lsm \frac {K_n} {\langle t \rangle}, \quad \forall\, t\ge 0, \label{eApriori}
\end{align}
where $K_n$ is a constant depending on the iteration step $n$. Because of the dependence on $n$, the estimate \eqref{eApriori} is certainly not
good for us since we have to justify the inductive assumption \eqref{e33} for step $n$. Nevertheless it is an important a priori estimate for our
argument later. In the next section we shall upgrade the estimate \eqref{eApriori} to the
true decay estimate \eqref{e33}.

\section{A priori energy estimates of the iteration system \eqref{eq10a}--\eqref{eq10b}} \label{sec_uni}

In this section we carry out basic energy estimates for the iteration system \eqref{eq10a}--\eqref{eq10b}. Our standing inductive assumption is
\begin{align}
\| \bar u^{(n-1)} (t) \|_{H^m} + \| (\bar u^{(n-1)} (t) )^\prime \|_{H^m}
+ \| \bar \vv^{(n-1)}(t) \|_{H^m} \notag \\
 + \| ( \bar \vv^{(n-1)} (t) )^\prime \|_{H^m}
\lsm \frac {\epsilon}{\langle t \rangle}. \label{e100}
\end{align}

For the functions $(\Phi_1,\Phi_2)$, recall the estimate
\begin{align}
\| \Phi_1 (t) \|_{H^{m+2}} + \|\Phi_2(t) \|_{H^{m+2}} + \| ( \Phi_1 (t) )^\prime \|_{H^{m+1}} \notag \\
+\| ( \Phi_2(t) )^\prime \|_{H^{m+1}} \lsm \frac {\epsilon} {\langle t \rangle},\quad \forall\, t\ge 0. \label{e100a}
\end{align}

Also for $(h_1,h_2)$, we shall use the estimate
\begin{align}
\|h_1(t) \|_{L_x^\infty} + \| \partial_t h_1(t) \|_{L_x^\infty} + \| D^{m+2} h_1(t) \|_{L_x^\infty} \notag \\
+ \| \partial_t D^{m+1} h_1(t) \|_{L_x^\infty} \lsm \frac {\epsilon}{\langle t \rangle}, \quad \forall\, t\ge 0. \label{e100b} \\
\|h_2(t) \|_{L_x^\infty} + \| \partial_t h_2(t) \|_{L_x^\infty} + \| D^{m+2} h_2(t) \|_{L_x^\infty} \notag \\
+ \| \partial_t D^{m+1} h_2(t) \|_{L_x^\infty} \lsm \frac {\epsilon}{\langle t \rangle}, \quad \forall\, t\ge 0. \label{e100c}
\end{align}

Our task is to justify the estimate \eqref{e100} for $(\bar u^{(n)}, \bar \vv^{(n)})$. For the convenience of notations, we introduce
\begin{align*}
A_n(t) &= \| \bar u^{(n)} (t) \|_{H^m} + \| (\bar u^{(n)} (t) )^\prime \|_{H^m}
+ \| \bar \vv^{(n)}(t) \|_{H^m}  \\
 &\qquad+ \| ( \bar \vv^{(n)} (t) )^\prime \|_{H^m}, \quad \forall\, n\ge 0.
\end{align*}

Take $D^\alpha$ derivative on both sides of \eqref{eq10a} and multiply by the factor $\partial_t D^\alpha \bar u^{(n)}$. Integrating by
parts and summing over $|\alpha| \le m$, we obtain
\begin{align}
 & \frac 12 \frac d {dt} \Bigl( \sum_{|\alpha| \le m} \| D^\alpha \bar u^{(n)} (t) \|^2_{L_x^2} + \| \left( D^\alpha \bar u^{(n)} (t) \right)^\prime
 \|_{L_x^2}^2 \Bigr) \notag \\
 = & \chi_n(t) \sum_{|\alpha| \le m} \int_{\mathbb R^2} D^\alpha \left( \nabla \cdot
\Bigl( (\vv^{(n-1)} \cdot \nabla)(\bar \vv^{(n)}) + u^{(n-1)} \nabla \bar u^{(n)} \Bigr) \right)
\partial_t D^\alpha \bar u^{(n)} dx \label{52a} \\
& \quad - \chi_n(t) \sum_{|\alpha| \le m} \int_{\mathbb R^2} D^\alpha
\partial_t \Bigl( (\vv^{(n-1)} \cdot \nabla) (\bar u^{(n)} ) + u^{(n-1)} \nabla \cdot ( \bar \vv^{(n)}) \Bigr)
\partial_t D^\alpha \bar u^{(n)} dx \label{52b} \\
& \quad + \sum_{|\alpha| \le m} \int_{\mathbb R^2} D^\alpha \nabla \cdot \Bigl( (\vv^{(n-1)} \cdot \nabla) \Phi_2 + u^{(n-1)} \nabla \Phi_1 \Bigr)
\partial_t D^\alpha \bar u^{(n)} dx \label{52c} \\
& \quad - \sum_{|\alpha| \le m} \int_{\mathbb R^2} D^\alpha
\partial_t \Bigl( (\vv^{(n-1)} \cdot \nabla) \Phi_1 + u^{(n-1)} \nabla \cdot \Phi_2 \Bigr)
\partial_t D^\alpha \bar u^{(n)} dx \label{52d} \\
& \quad + \sum_{|\alpha| \le m} \int_{\mathbb R^2} D^\alpha \nabla \cdot \Bigl(
\bigl( (\bar \vv^{(n-1)} + \Phi_2) \cdot \nabla \bigr) h_2 + (\bar u^{(n-1)} + \Phi_1) \nabla h_1 \Bigr)
\partial_t D^\alpha \bar u^{(n)} dx \label{52e} \\
& \quad - \sum_{|\alpha| \le m} \int_{\mathbb R^2} D^\alpha \partial_t
\Bigl( \bigl( (\bar \vv^{(n-1)} + \Phi_2 ) \cdot \nabla \bigr) h_1 + ( \bar u^{(n-1)} + \Phi_1 ) \nabla \cdot h_2 \Bigr)
\partial_t D^\alpha \bar u^{(n)} dx. \label{52f}
\end{align}

We first deal with \eqref{52a}. By using integration by parts and Leibniz in time, we write
\begin{align}
\eqref{52a} & = \chi_n(t) \sum_{|\alpha| \le m-1} \int_{\mathbb R^2} D^\alpha \nabla \cdot \Bigl(
(\vv^{(n-1)} \cdot \nabla) (\bar \vv^{(n)} ) + u^{(n-1)} \nabla \bar u^{(n)} \Bigr) \partial_t D^\alpha \bar u^{(n)} dx
\label{52aa} \\
& \quad + \frac d {dt} \Bigl( \chi_n(t) \sum_{\substack{|\alpha|=m \\ 1\le i,j\le 2 }}
\int_{\mathbb R^2} \vv_j^{(n-1)} D^\alpha \partial_{x_j} \partial_{x_i} \bar \vv_i^{(n)} D^\alpha \bar u^{(n)} dx \Bigr)
\label{52ab} \\
& \quad - \chi_n^\prime (t) \sum_{\substack{|\alpha|=m \\ 1\le i,j\le 2}} \int_{\R^2} \vv_j^{(n-1)} D^\alpha \partial_{x_j}
\partial_{x_i} \bar v_i^{(n)} D^\alpha \bar u^{(n)} dx \label{52ac} \\
& \quad - \chi_n(t) \sum_{\substack{|\alpha|=m \\ 1\le i,j\le 2}} \int_{\R^2} \partial_t \vv_j^{(n-1)} D^\alpha
\partial_{x_j} \partial_{x_i} \bar \vv_{i}^{(n)} D^\alpha \bar u^{(n)} dx \label{52ad} \\
& \quad -\chi_n(t) \sum_{\substack{|\alpha|=m \\ 1\le i,j\le 2}} \int_{\R^2} \vv_j^{(n-1)} \partial_t D^\alpha \partial_{x_j} \partial_{x_i}
\bar \vv_i^{(n)} D^\alpha \bar u^{(n)} dx. \label{52ae}
\end{align}

By using \eqref{e100}, \eqref{e100a}--\eqref{e100c} and Sobolev embedding, we compute the absolute value of
\eqref{52aa} as follows
\begin{align*}
|\eqref{52aa} | & \lsm \sum_{|\alpha| \le m-1} \sum_{|\beta| \le |\alpha|+1} \int_{\R^2} | D^\beta \vv^{(n-1)} |
\cdot | D^{\alpha+2-\beta} \bar \vv^{(n)} | \cdot |\partial_t D^\alpha \bar u^{(n)} | dx \\
& \quad+ \int_{\R^2} |D^\beta u^{(n-1)} | \cdot |D^{\alpha+2-\beta} \bar u^{(n)} | \cdot |\partial_t D^\alpha
\bar u^{(n)} | dx \\
& \lsm \sum_{|\alpha| \le m-1} \sum_{|\beta| \le |\alpha|+1} \| D^\beta h_2(t) \|_{L_x^\infty} \cdot \|D^{\alpha+2-\beta} \bar \vv^{(n)} (t) \|_{L_x^2}
\cdot \| \partial_t D^\alpha \bar u^{(n)} (t) \|_{L_x^2} \\
& \quad + \| D^\beta h_1(t) \|_{L_x^\infty} \cdot \| D^{\alpha+2-\beta} \bar u^{(n)} (t) \|_{L_x^2} \cdot \| \partial_t
D^\alpha \bar u^{(n)} (t) \|_{L_x^2} \\
& \quad +\sum_{|\alpha| \le m-1} \sum_{\substack{ |\beta| \le |\alpha|+1 \\ |\beta| \le \frac m2} }
\| D^\beta \bar \vv^{(n-1)}(t) \|_{L_x^\infty} \cdot \|D^{\alpha+2-\beta} \bar \vv^{(n)} (t) \|_{L_x^2}
\cdot \| \partial_t D^\alpha \bar u^{(n)} (t) \|_{L_x^2} \\
& \quad + \| D^\beta \bar u^{(n-1)} (t) \|_{L_x^\infty} \cdot \| D^{\alpha+2-\beta} \bar u^{(n)} (t) \|_{L_x^2} \cdot \| \partial_t
D^\alpha \bar u^{(n)} (t) \|_{L_x^2} \\
& \quad +\sum_{|\alpha| \le m-1} \sum_{\substack{ |\beta| \le |\alpha|+1 \\ |\beta| > \frac m2} }
\| D^\beta \bar \vv^{(n-1)}(t) \|_{L_x^2} \cdot \|D^{\alpha+2-\beta} \bar \vv^{(n)} (t) \|_{L_x^\infty}
\cdot \| \partial_t D^\alpha \bar u^{(n)} (t) \|_{L_x^2} \\
& \quad + \| D^\beta \bar u^{(n-1)} (t) \|_{L_x^2} \cdot \| D^{\alpha+2-\beta} \bar u^{(n)} (t) \|_{L_x^\infty} \cdot \| \partial_t
D^\alpha \bar u^{(n)} (t) \|_{L_x^2} \\
& \lsm \frac {\epsilon} {\langle t \rangle} A_n(t)^2.
\end{align*}

The term \eqref{52ab} will be absorbed into the LHS of \eqref{52a}. To bound \eqref{52ac}, note that
\begin{align*}
| \chi_n^\prime (t) | \le 2^{-n} | \chi^\prime( 2^{-n} t) | \lsm 1, \quad \forall\, n\ge 0, \; t\ge 0.
\end{align*}

By using integration by parts, \eqref{e100}--\eqref{e100c} and Sobolev embedding, we obtain
\begin{align*}
| \eqref{52ac} | &\lsm  \sum_{\substack{|\alpha| =m \\ 1\le i,j\le 1}}
\Bigl( \| \vv^{(n-1)}_j (t) \|_{L_x^\infty} + \| \partial_{x_j} \vv^{(n-1)}_j (t) \|_{L_x^\infty} \Bigr) A_n(t)^2 \\
& \lsm (\| h_2(t)\|_{L_x^\infty} + \| D h_2(t) \|_{L_x^\infty} + \| \Phi_2(t)\|_{H^m} + \| \bar \vv^{(n-1)}(t)\|_{H^m} )
A_n(t)^2 \\
& \lsm \frac {\epsilon} {\langle t \rangle} A_n(t)^2.
\end{align*}

Proceeding in a similar manner, we also obtain
\begin{align*}
| \eqref{52ad} | \lsm \frac {\epsilon} {\langle t \rangle} A_n(t)^2.
\end{align*}

For \eqref{52ae} we shall integrate by parts twice and this gives us
\begin{align}
\eqref{52ae} &= -\chi_n(t) \sum_{\substack{|\alpha|=m \\ 1\le i,j\le 2}} \int_{\R^2} \vv_j^{(n-1)} \partial_t D^\alpha
\bar \vv_i^{(n)} \partial_{x_j} \partial_{x_i} D^\alpha \bar u^{(n)} dx \label{52ae1} \\
& \quad + \chi_n(t) \sum_{\substack{|\alpha|=m \\ 1\le i,j\le 2}} \int_{\R^2}
\partial_{x_j} \vv_j^{(n-1)} \partial_t D^\alpha \partial_{x_i}
\bar \vv_i^{(n)} D^\alpha \bar u^{(n)} dx  \label{52ae2} \\
& \quad -\chi_n(t) \sum_{\substack{|\alpha|=m \\ 1\le i,j\le 2}} \int_{\R^2}
\partial_{x_i} \vv_j^{(n-1)} \partial_t D^\alpha
\bar \vv_i^{(n)} \partial_{x_j} D^\alpha \bar u^{(n)} dx \label{52ae3}
\end{align}

We shall keep the term \eqref{52ae1} since it will cancel with the corresponding term obtained in the calculation
of $\bar \vv^{(n)}$ later. The term \eqref{52ae2} can be bounded after one more integration by parts in $\partial_{x_i}$ and
this will give us
\begin{align*}
|\eqref{52ae2} | \lsm \frac {\epsilon} {\langle t \rangle} A_n(t)^2.
\end{align*}

The term \eqref{52ae3} can be bounded in a similar way as \eqref{52ac} and we get
\begin{align*}
|\eqref{52ae3} | \lsm \frac {\epsilon} {\langle t \rangle} A_n(t)^2.
\end{align*}

Collecting all the estimates, we obtain
\begin{align*}
\eqref{52a} & =\frac d {dt} \Bigl( \chi_n(t) \sum_{\substack{|\alpha|=m \\ 1\le i,j\le 2 }}
\int_{\mathbb R^2} \vv_j^{(n-1)} D^\alpha \partial_{x_j} \partial_{x_i} \bar \vv_i^{(n)} D^\alpha \bar u^{(n)} dx \Bigr) \\
&\quad -\chi_n(t) \sum_{\substack{|\alpha|=m \\ 1\le i,j\le 2}} \int_{\R^2} \vv_j^{(n-1)} \partial_t D^\alpha
\bar \vv_i^{(n)} \partial_{x_j} \partial_{x_i} D^\alpha \bar u^{(n)} dx \\
& \quad + E_1,
\end{align*}
where
\begin{align*}
|E_1| \lsm \frac {\epsilon} {\langle t \rangle} A_n(t)^2.
\end{align*}

Next we consider \eqref{52b} and write it as
\begin{align}
\eqref{52b} & = - \chi_n(t) \sum_{|\alpha| \le m-1} \int_{\R^2} D^\alpha \partial_t
\Bigl( (\vv^{(n-1)} \cdot \nabla ) \bar u^{(n)} + u^{(n-1)} \nabla \cdot ( \bar \vv^{(n)} ) \Bigr) \partial_t D^\alpha \bar u^{(n)} dx
\label{52ba} \\
& \quad - \chi_n(t) \sum_{|\alpha|=m} \int_{\R^2} D^\alpha \partial_t \Bigl( (\vv^{(n-1)} \cdot \nabla) \bar u^{(n)}  \Bigr)
\partial_t D^\alpha \bar u^{(n)} dx \label{52bb} \\
& \quad - \chi_n(t) \sum_{|\alpha|=m} \int_{\R^2} D^\alpha \partial_t \Bigl( u^{(n-1)} \nabla \cdot (\bar \vv^{(n)} ) \Bigr)
\partial_t D^\alpha \bar u^{(n)} dx. \label{52bc}
\end{align}

The term \eqref{52ba} can be easily bounded by
\begin{align*}
| \eqref{52ba} | \lsm \frac {\epsilon} {\langle t \rangle} A_n(t)^2.
\end{align*}

For \eqref{52bb}, by using integration by parts, \eqref{e100}--\eqref{e100c}, Sobolev embedding and proceeding in the similar way as
in the estimate of \eqref{52a}, we obtain
\begin{align*}
| \eqref{52bb} | & \lsm \sum_{|\alpha|=m} \Bigl| \int_{\R^2} D^\alpha \Bigl( (\partial_t \vv^{(n-1)} \cdot \nabla) \bar u^{(n)} \Bigr) \partial_t
D^\alpha \bar u^{(n)} dx \Bigr| \\
& \quad + \sum_{|\alpha| =m } \Bigl|
\int_{\R^2} D^\alpha \Bigl( \vv^{(n-1)} \cdot \nabla \partial_t \bar u^{(n)} \Bigr) \partial_t D^\alpha \bar u^{(n)} dx \Bigr| \\
& \lsm \sum_{|\alpha|=m} \sum_{|\beta| \le |\alpha|} \int_{\R^2} |\partial_t D^{\alpha-\beta} \vv^{(n-1)} | \cdot |\nabla D^\beta \bar u^{(n)} |
\cdot | \partial_t D^\alpha \bar u^{(n)} | dx \\
& \quad + \sum_{|\alpha|=m} \sum_{|\beta| < |\alpha|} \int_{\R^2} |D^{\alpha-\beta} \vv^{(n-1)} |
\cdot | \nabla D^\beta \partial \bar u^{(n)} | \cdot | \partial_t D^\alpha \bar u^{(n)} | dx \\
&\quad + \sum_{|\alpha|=m} \Bigl| \int_{\R^2} (\vv^{(n-1)} \cdot \nabla )D^\alpha \partial_t \bar u^{(n)} \cdot \partial_t D^\alpha
\bar u^{(n)} dx \Bigr| \\
& \lsm \frac {\epsilon}{\langle t \rangle} A_n(t)^2.
\end{align*}

For \eqref{52bc}, we use Leibniz in time and write it as
\begin{align}
\eqref{52bc} & = -\sum_{|\alpha| =m } \int_{\R^2} D^\alpha \Bigl( \partial_t u^{(n-1)} \nabla \cdot \bar \vv^{(n)} \Bigr) \partial_t D^\alpha
\bar u^{(n)} dx \label{52bca} \\
& \quad - \sum_{|\alpha|=m} \int_{\R^2} D^\alpha \Bigl( u^{(n-1)} \partial_t \nabla \cdot \bar \vv^{(n)} \Bigr) \partial_t D^\alpha
\bar u^{(n)} dx. \label{52bcb}
\end{align}

We can easily bound \eqref{52bca} as
\begin{align*}
|\eqref{52bca} | & \lsm \sum_{|\alpha| =m } \sum_{|\beta| \le |\alpha|} \int_{\R^2} |D^\beta \partial_t u^{(n-1)} |
\cdot | D^{\alpha-\beta} \nabla \cdot \bar \vv^{(n)} | \cdot | \partial_t D^\alpha \bar u^{(n)} | dx \\
& \lsm \frac {\epsilon}{\langle t \rangle } A_n(t)^2.
\end{align*}

On the other hand for \eqref{52bcb}, we further decompose it into
\begin{align}
\eqref{52bcb} & = -\sum_{|\alpha|=m} \int_{\R^2} u^{(n-1)} \partial_t D^\alpha \nabla \cdot \bar \vv^{(n)} \partial_t D^\alpha \bar u^{(n)} dx \notag \\
& \qquad - \sum_{|\alpha| =m } \sum_{0<|\beta|\le |\alpha|} \binom {\alpha}{\beta} \int_{\R^2}
D^\beta u^{(n-1)} D^{\alpha-\beta} \partial_t \nabla \cdot \bar \vv^{(n)} \partial_t D^\alpha \bar u^{(n)} dx \notag\\
& = \sum_{|\alpha|=m} \int_{\R^2} u^{(n-1)} \partial_t D^\alpha \bar \vv^{(n)} \cdot \nabla \partial_t D^\alpha \bar u^{(n)} dx \label{52bcba} \\
& \qquad +\sum_{|\alpha|=m} \int_{\R^2} \Bigl( \nabla u^{(n-1)} \cdot \partial_t D^\alpha \bar \vv^{(n)} \Bigr)
\partial_t D^\alpha \bar u^{(n)} dx \label{52bcbb} \\
& \qquad - \sum_{|\alpha|=m} \sum_{0<|\beta|\le |\alpha|}
\binom{\alpha}{\beta} \int_{\R^2} D^\beta u^{(n-1)} D^{\alpha-\beta} \partial_t \nabla \cdot \bar \vv^{(n)} \partial_t D^{\alpha}
\bar u^{(n)} dx. \label{52bcbc}
\end{align}

We shall keep the term \eqref{52bcba} since it will cancel with a corresponding term arising from the calculation of
$\bar \vv^{(n)}$ later. The terms \eqref{52bcbb}, \eqref{52bcbc} can be bounded in the same way as the calculation of
\eqref{52a} and this gives us
\begin{align*}
|\eqref{52bcbb}| + |\eqref{52bcbc}| \lsm \frac {\epsilon}{\langle t \rangle} A_n(t)^2.
\end{align*}

Collecting all the estimates, we obtain
\begin{align*}
\eqref{52b} &= \sum_{|\alpha|=m} \int_{\R^2} u^{(n-1)} \partial_t D^\alpha \bar \vv^{(n)} \cdot \nabla \partial_t D^\alpha \bar u^{(n)} dx \\
&\qquad + E_2,
\end{align*}
where
\begin{align*}
|E_2| \lsm \frac {\epsilon} {\langle t \rangle} A_n(t)^2.
\end{align*}

We still have to bound the terms \eqref{52c}--\eqref{52f}. The estimates of these terms can be done in a similar manner as that of
\eqref{52a}. We omit the tedious computations and write the final result as
\begin{align*}
|\eqref{52c}| + |\eqref{52d}| + |\eqref{52e}| + |\eqref{52f}| \lsm \frac {\epsilon^2} {\langle t \rangle^2} A_n(t).
\end{align*}

Putting together all the estimates, we have the expression
\begin{align}
 & \frac 12 \frac d {dt} \Bigl( \sum_{|\alpha| \le m} \| D^\alpha \bar u^{(n)} (t) \|^2_{L_x^2} + \| \left( D^\alpha \bar u^{(n)} (t) \right)^\prime
 \|_{L_x^2}^2 \Bigr) \notag \\
  = & \; \frac d {dt} \Bigl( \chi_n(t) \sum_{\substack{|\alpha|=m \\ 1\le i,j\le 2 }}
\int_{\mathbb R^2} \vv_j^{(n-1)} D^\alpha \partial_{x_j} \partial_{x_i} \bar \vv_i^{(n)} D^\alpha \bar u^{(n)} dx \Bigr) \notag \\
&\quad -\chi_n(t) \sum_{\substack{|\alpha|=m \\ 1\le i,j\le 2}} \int_{\R^2} \vv_j^{(n-1)} \partial_t D^\alpha
\bar \vv_i^{(n)} \partial_{x_j} \partial_{x_i} D^\alpha \bar u^{(n)} dx \notag \\
& \quad +\sum_{|\alpha|=m} \int_{\R^2} u^{(n-1)} \partial_t D^\alpha \bar \vv^{(n)} \cdot \nabla \partial_t D^\alpha \bar u^{(n)} dx \notag \\
& \quad + F_1+F_2, \label{e61a}
\end{align}
where
\begin{align*}
|F_1(t)| &\lsm \frac {\epsilon}{\langle t \rangle} A_n(t)^2, \\
|F_2(t)| & \lsm \frac {\epsilon^2} {\langle t \rangle^2} A_n(t).
\end{align*}

Next we consider the bounds on $\bar \vv^{(n)}$ by using \eqref{eq10b}. Proceeding in a similar fashion as in the computation of
$\bar u^{(n)}$, we obtain
\begin{align}
& \frac 12 \frac d {dt} \Bigl( \sum_{|\alpha| \le m} \| D^\alpha \bar \vv^{(n)} (t) \|^2_{L_x^2} + \| \left( D^\alpha \bar \vv^{(n)} (t) \right)^\prime
 \|_{L_x^2}^2 \Bigr) \notag \\
 = & \chi_n(t) \sum_{\substack{|\alpha|=m \\ 1\le i,j\le 2}} \int_{\R^2} \vv_j^{(n-1)} \partial_{x_i}\partial_{x_j}D^\alpha \bar u^{(n)} \cdot D^\alpha
 \partial_t \bar \vv_i^{(n)} dx \notag \\
 & \quad + \frac d {dt} \Bigl( \chi_n(t) \sum_{|\alpha|=m} \int_{\R^2} u^{(n-1)} | D^\alpha \nabla \cdot \bar \vv^{(n)} |^2 dx \Bigr) \notag \\
 & \quad -\sum_{|\alpha|=m} \chi_n(t) \int_{\R^2} u^{(n-1)} \nabla \partial_t D^\alpha \bar u^{(n)} \cdot D^\alpha \partial_t \bar \vv^{(n)} dx \notag \\
 & \quad +F_3+F_4, \label{e61b}
 \end{align}
 where
\begin{align*}
|F_3(t)| &\lsm \frac {\epsilon}{\langle t \rangle} A_n(t)^2, \\
|F_4(t)| & \lsm \frac {\epsilon^2} {\langle t \rangle^2} A_n(t).
\end{align*}

Denote
\begin{align}
B_n(t)^2 &= \frac 12 \Bigl( \sum_{|\alpha|\le m} \| D^\alpha \bar u^{(n)} (t) \|_{L_x^2}^2 + \| \left( D^\alpha \bar u^{(n)} (t) \right)^\prime \|_{L_x^2}^2
\notag \\
&\qquad + \sum_{|\alpha|\le m} \| D^\alpha \bar \vv^{(n)} (t) \|^2_{L_x^2} + \| \left( D^\alpha \bar \vv^{(n)} (t) \right)^\prime \|^2_{L_x^2}
\Bigr) \notag \\
& \quad + \chi_n(t) \sum_{\substack{|\alpha| =m \\ 1\le i,j\le 2}} \int_{\R^2} \partial_{x_j} \vv^{(n-1)}_{j} D^\alpha \partial_{x_i}
\bar \vv_i^{(n)} D^\alpha \bar u^{(n)} dx \label{e94a} \\
& \quad + \chi_n(t) \sum_{ \substack{ |\alpha|=m \\ 1\le i,j\le 2} } \int_{\R^2} \vv_j^{(n-1)} D^\alpha \partial_{x_i} \bar \vv^{(n)}_i
\partial_{x_j} D^\alpha \bar u^{(n)} dx \label{e94b} \\
& \quad - \chi_n(t) \sum_{|\alpha|=m} \int_{\R^2} u^{(n-1)} | D^\alpha \nabla \cdot \bar \vv^{(n)} |^2 dx. \label{e94c}
\end{align}

By using \eqref{e100}--\eqref{e100c}, we have
\begin{align*}
& | \eqref{e94a} | + |\eqref{e94b}| + | \eqref{e94c} | \\
\lsm & \epsilon \cdot \frac 12 \Bigl( \sum_{|\alpha|\le m} \| D^\alpha \bar u^{(n)} (t) \|_{L_x^2}^2 + \| \left( D^\alpha \bar u^{(n)} (t) \right)^\prime \|_{L_x^2}^2
\notag \\
&\qquad + \sum_{|\alpha|\le m} \| D^\alpha \bar \vv^{(n)} (t) \|^2_{L_x^2} + \| \left( D^\alpha \bar \vv^{(n)} (t) \right)^\prime \|^2_{L_x^2}
\Bigr).
\end{align*}

Therefore for $\epsilon$ sufficiently small (depending only on an absolute constant), we have
\begin{align}
\frac 1 C B_n(t) \le A_n(t) \le C B_n(t), \label{e107}
\end{align}
where $C$ is some absolute constant. Adding together \eqref{e61a} and \eqref{e61b} and using \eqref{e107}, we have
\begin{align}
\Bigl| \frac d {dt} B_n(t) \Bigr| \lsm \frac {\epsilon} {\langle t \rangle } B_n(t) + \frac {\epsilon^2} {\langle t \rangle^2},
\quad t\ge 0, \label{e102a}
\end{align}
where we have canceled a factor of $B_n(t)$ on both sides of the inequality.

By \eqref{eApriori} and \eqref{e107}, we have the rough estimate
\begin{align}
|B_n(t)| \lsm \frac {K_n} {\langle t \rangle}, \quad\forall\, t\ge 0, \label{e102b}
\end{align}
where $K_n$ is a constant depending on the iteration number $n$.

We now need the following

\begin{lem}[Infinite time Gronwall lemma] \label{gron_1}
Let B(t) be a nonnegative smooth function on $[1,\infty)$ satisfying
\begin{align} \label{e905}
\left|\frac d {dt} B(t) \right| \le \frac \epsilon{t} B(t) + \frac {\delta} {t^2}
\end{align}
and
\begin{align}
B(t) \le \frac {K}{t}, \quad \forall\, t \ge 1. \label{e904}
\end{align}
Here $0<\epsilon<1$, $\delta>0$, $K>0$ are constants. Then
\begin{align}
B(t) \le \frac {\delta} {1-\epsilon} \cdot  \frac 1 t, \quad\, \forall\, t\ge 1. \label{e906}
\end{align}
\end{lem}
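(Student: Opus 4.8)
The plan is to avoid the usual forward Gronwall argument --- which on $[1,t]$ would only yield a bound of size $t^{\epsilon-1}$ and thus lose the sharp $1/t$ rate --- and instead to integrate the differential inequality \eqref{e905} \emph{backward from $+\infty$}, using the crude a priori bound \eqref{e904} both to kill the boundary term at infinity and to make a certain supremum finite.

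Concretely, I would first extract from \eqref{e905} the one-sided estimate $-B'(s)\le \frac{\epsilon}{s}B(s) + \frac{\delta}{s^2}$. Integrating this over $[t,T]$ with $1\le t\le T$ gives $B(t)-B(T) = -\int_t^T B'(s)\,ds \le \int_t^T\bigl(\frac{\epsilon}{s}B(s)+\frac{\delta}{s^2}\bigr)\,ds$, that is,
\[
B(t) \le B(T) + \int_t^T \Bigl( \frac{\epsilon}{s}B(s) + \frac{\delta}{s^2} \Bigr)\, ds .
\]
By \eqref{e904} the integrand is dominated by $(\epsilon K+\delta)\,s^{-2}$, hence integrable on $[t,\infty)$, and $B(T)\le K/T\to 0$ as $T\to\infty$. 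Letting $T\to\infty$ therefore produces the integral inequality
\[
B(t) \le \epsilon \int_t^\infty \frac{B(s)}{s}\, ds + \frac{\delta}{t}, \qquad \forall\, t\ge 1 .
\]

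The second step is a one-line bootstrap. Set $M := \sup_{s\ge 1} s\,B(s)$, which is finite by \eqref{e904} (indeed $M\le K$) --- this is the unique place where the non-uniform estimate \eqref{e904}, equivalently \eqref{e102b}, is genuinely used. Substituting $B(s)\le M/s$ into the integral above gives $B(t)\le \epsilon M\int_t^\infty s^{-2}\,ds + \delta/t = (\epsilon M+\delta)/t$, i.e. $t\,B(t)\le \epsilon M + \delta$ for all $t\ge 1$; taking the supremum over $t\ge 1$ yields $M\le \epsilon M + \delta$, and since $0<\epsilon<1$ this rearranges to $M\le \delta/(1-\epsilon)$, which is exactly \eqref{e906}.

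I expect no real obstacle here: the argument is elementary, and the only points needing a word of care are the passage $T\to\infty$ (absolute convergence of the improper integral and the vanishing of $B(T)$) and the conceptual observation that it is the \emph{finiteness} of $M$, not any smallness, that the a priori bound supplies. When the lemma is applied to $B=B_n$ --- with the lemma's $\epsilon$ taken to be a fixed constant multiple of the data size, its $\delta$ a multiple of the square, and $K=K_n$ --- the constant $\delta/(1-\epsilon)$ produced is independent of $n$, so the resulting bound $B_n(t)\lsm \langle t\rangle^{-1}$ is uniform in $n$ and closes the induction \eqref{e33}/\eqref{e100}.
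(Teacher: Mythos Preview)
Your proof is correct and rests on the same idea as the paper's: integrate the differential inequality backward from $+\infty$, use the a priori bound \eqref{e904} to justify the limit and to start a bootstrap, and observe that the final constant is independent of $K$. The only difference is cosmetic: the paper iterates the map $K\mapsto \epsilon K+\delta$ explicitly (obtaining $B(t)\le (K\epsilon^{j+1}+\delta(1+\cdots+\epsilon^j))/t$ and sending $j\to\infty$), whereas you pass directly to the fixed point by setting $M=\sup_{s\ge 1} sB(s)$ and solving $M\le \epsilon M+\delta$ in one step---a slightly cleaner packaging of the same argument.
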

\begin{rem}
The crucial point here is that the final bound on $B(t)$ is independent of the constant $K$.
\end{rem}
\begin{proof}
By Fundamental Theorem of Calculus and plugging the estimate \eqref{e904} into \eqref{e905}, we obtain
\begin{align*}
B(t) &\le \int_{t}^\infty \left| \frac d{d\tau} B(\tau) \right| d\tau \\
 & \le \int_t^\infty \Bigl( \frac \epsilon {\tau} \cdot \frac {K}{\tau} + \frac {\delta} {\tau^2} \Bigr) d\tau \\
 & \le \frac { K\epsilon+ \delta}{t}.
 \end{align*}
We inductively assume $j\ge0$,
\begin{align*}
B(t) & \le \frac { K\epsilon^{j+1} + \delta (1+\cdots+\epsilon^j) }{t}.
\end{align*}
Then for $j+1$, using \eqref{e905} we have
\begin{align*}
\left| \frac d {dt} B(t) \right| &\le \frac 1 {t^2} \cdot \Bigl( \epsilon ( K \epsilon^{j+1} + \delta(1+ \cdots +\epsilon^j)) + \delta \Bigr) \\
& \le \frac 1 {t^2} \Bigl( K \epsilon^{j+2} + \delta(1+ \cdots + \epsilon^{j+1}) \Bigr).
\end{align*}
Using again the fundamental theorem of calculus we conclude that
the inductive estimate holds for all $j$. Taking $j\to \infty$ we obtain \eqref{e906}.
\end{proof}

By Lemma \ref{gron_1} and \eqref{e102a}, \eqref{e102b}, we obtain for $\epsilon$ sufficiently small
\begin{align}
B_n(t) \lsm \frac {\epsilon^2} {\langle t \rangle}. \label{e115}
\end{align}
The extra power of $\epsilon$ can be used to kill the underlying implied constants. Therefore the inductive
assumption \eqref{e100} also holds for step $n$ with the same underlying constant. Hence we obtain the following:
there exists a constant $C>0$, such that for all $n\ge 0$,
\begin{align}
\| \bar u^{(n)} (t) \|_{H^m} + \| (\bar u^{(n)} (t) )^\prime \|_{H^m}
+ \| \bar \vv^{(n)}(t) \|_{H^m} \notag \\
 + \| ( \bar \vv^{(n)} (t) )^\prime \|_{H^m}
\le C\frac {\epsilon}{\langle t \rangle},\quad\forall\, t\ge 0. \label{e127}
\end{align}

\begin{rem}
Since the bound \eqref{e115} is slightly stronger than the inductive bound \eqref{e100}, one may wonder whether
the bound \eqref{e100} is optimal in terms of the power of $\epsilon$. Indeed by revisiting the inequality \eqref{e102a}
and Lemma \ref{gron_1}, one can easily derive the bound
\begin{align*}
B_n(t) \lsm \frac {\epsilon^2} {\langle t \rangle}, \quad\forall\, n\ge 0,
\end{align*}
from which it follows that
\begin{align*}
\| \bar u^{(n)} (t) \|_{H^m} + \| (\bar u^{(n)} (t) )^\prime \|_{H^m}
+ \| \bar \vv^{(n)}(t) \|_{H^m} \notag \\
 + \| ( \bar \vv^{(n)} (t) )^\prime \|_{H^m}
\le C\frac {\epsilon^2}{\langle t \rangle},\quad\forall\, t\ge 0, \; n\ge 0.
\end{align*}
\end{rem}
\section{Contraction estimates and existence of the smooth solution} \label{sec_contraction}
In this section we shall show that the sequence $(\bar u^{(n)}, \bar \vv^{(n)})$ forms
a Cauchy sequence in some suitable metric. Define
\begin{align*}
U^{(n)} & = \bar u^{(n)} - \bar u^{(n-1)} \\
V^{(n)} & = \bar \vv^{(n)} -\bar \vv^{(n-1)}.
\end{align*}
From \eqref{eq10a}, \eqref{eq10b}, we get
\begin{align}
(\square +1 ) U^{(n)} & = \chi_n(t) \nabla \cdot \Bigl( (V^{(n-1)} \cdot \nabla) \bar \vv^{(n)} + U^{(n-1)} \nabla \bar u^{(n)} \Bigr) \notag \\
& \quad + \chi_n(t) \nabla \cdot \Bigl( \vv^{(n-2)} \cdot \nabla V^{(n)} + u^{(n-2)} \nabla U^{(n)} \Bigr) \notag \\
& \quad + (\chi_n(t) - \chi_{n-1}(t) ) \nabla \cdot \Bigl( (\vv^{(n-2)} \cdot \nabla) \bar \vv^{(n-1)} + u^{(n-2)} \nabla \bar u^{(n-1)} \Bigr)
\notag \\
& \quad - \chi_n(t) \cdot \partial_t \Bigl( (V^{(n-1)} \cdot \nabla) \bar u^{(n)} + U^{(n-1)} \nabla \cdot \bar \vv^{(n)} \Bigr) \notag \\
& \quad - \chi_n(t) \cdot \partial_t \Bigl( (\vv^{(n-2)} \cdot \nabla) U^{(n)} + u^{(n-2)} \nabla \cdot V^{(n)} \Bigr) \notag \\
& \quad - (\chi_n(t) -\chi_{n-1}(t)) \partial_t \Bigl( (\vv^{(n-2)} \cdot \nabla) \bar u^{(n-1)} + u^{(n-2)} \nabla \cdot \bar \vv^{(n-1)}
\Bigr) \notag \\
& \quad + \nabla \cdot \Bigl( (V^{(n-1)} \cdot \nabla) \Phi_2 + U^{(n-1)} \nabla \Phi_1 \Bigr) \notag \\
& \quad - \partial_t \Bigl( (V^{(n-1)} \cdot \nabla) \Phi_1 + U^{(n-1)} \nabla \cdot \Phi_2 \Bigr) \notag \\
& \quad + \nabla \cdot \Bigl(
\bigl( V^{(n-1)}  \cdot \nabla \bigr) h_2 + U^{(n-1)} \nabla h_1 \Bigr) \notag \\
& \quad - \partial_t
\Bigl( \bigl( V^{(n-1)}  \cdot \nabla \bigr) h_1 + U^{(n-1)}  \nabla \cdot h_2 \Bigr), \label{Nov12a}
\end{align}

and

\begin{align}
(\square +1) V^{(n)} & = \chi_n(t) \nabla \Bigl( (V^{(n-1)} \cdot \nabla)(\bar u^{(n)}) + U^{(n-1)} \nabla \cdot \bar \vv^{(n)} \Bigr) \notag \\
&\quad+ \chi_n(t) \nabla \Bigl( (\vv^{(n-2)} \cdot \nabla)(U^{(n)}) + u^{(n-2)} \nabla \cdot V^{(n)} \Bigr) \notag \\
&\quad +(\chi_n(t)-\chi_{n-1}(t)) \nabla \Bigl( (\vv^{(n-2)} \cdot \nabla)(\bar u^{(n-1)}) + u^{(n-2)} \nabla \cdot \bar \vv^{(n-1)} \Bigr) \notag \\
& \quad - \chi_n(t) \partial_t
\Bigl( (V^{(n-1)} \cdot \nabla) ( \bar \vv^{(n)}) + U^{(n-1)} \nabla ( \bar u^{(n)} ) \Bigr) \notag \\
& \quad - \chi_n(t) \partial_t
\Bigl( (\vv^{(n-2)} \cdot \nabla) ( V^{(n)}) + u^{(n-2)} \nabla ( U^{(n)} ) \Bigr) \notag \\
& \quad - (\chi_n(t)-\chi_{n-1}(t))  \partial_t
\Bigl( (\vv^{(n-2)} \cdot \nabla) ( \bar \vv^{(n-1)}) + u^{(n-2)} \nabla ( \bar u^{(n-1)} ) \Bigr) \notag \\
& \quad + \nabla \Bigl( (V^{(n-1)} \cdot \nabla) \Phi_1 + U^{(n-1)} \nabla \cdot \Phi_2 \Bigr) \notag \\
& \quad -\partial_t \Bigl( (V^{(n-1)} \cdot \nabla ) \Phi_2 + U^{(n-1)} \nabla \Phi_1 \Bigr) \notag \\
& \quad + \nabla \Bigl(  V^{(n-1)}  \cdot \nabla h_1 + U^{(n-1)} \nabla \cdot h_2 \Bigr) \notag \\
& \quad - \partial_t \Bigl(
\bigl( V^{(n-1)}  \cdot \nabla \bigr) h_2 + U^{(n-1)}  \nabla h_1 \Bigr) \notag \\
& \qquad - \nabla \Delta^{-1} \nabla \cdot( U^{(n-1)} \vv^{(n-1)} ) \notag \\
& \qquad - \nabla \Delta^{-1} \nabla \cdot( u^{(n-2)} V^{(n-1)} ) \label{Nov12b}
\end{align}

For the convenience of notations, we define
\begin{align}
w_n(t)^2 = \| U^{(n)} (t) \|_{L_x^2}^2 + \| ( U^{(n)}(t) )^\prime \|_{L_x^2}^2+ \| V^{(n)}(t)\|_{L_x^2}^2 \notag \\
+ \| (V^{(n)}(t) )^\prime \|_{L_x^2}^2, \quad n\ge 0. \label{132}
\end{align}

We shall derive some recursive inequalities for $w_n(t)$ from which it will follow that the sequence
$(\bar u^{(n)}, \bar \vv^{(n)})$ is a contraction.

Multiplying both sides of \eqref{Nov12a} by $\partial_t U^{(n)}$ and integrating by parts, we obtain
\begin{align}
 & \frac d {dt} \Bigl( \frac 12 \| U^{(n)} (t) \|_{L_x^2}^2 + \frac 12 \| ( U^{(n)} (t) )^\prime \|_{L_x^2}^2 \Bigr) \notag \\
= & \int_{\R^2} \chi_n(t) \nabla \cdot \Bigl( (V^{(n-1)} \cdot \nabla) \bar \vv^{(n)} + U^{(n-1)} \nabla \bar u^{(n)} \Bigr)
\cdot \partial_t U^{(n)} dx \label{113a} \\
& \quad +
\chi_n(t) \int_{\R^2}
\nabla \cdot \Bigl( (\vv^{(n-2)} \cdot \nabla) V^{(n)} + u^{(n-2)} \nabla U^{(n)} \Bigr)
\cdot \partial_t U^{(n)} dx \label{113b} \\
& \quad + (\chi_n(t) - \chi_{n-1}(t) )
\int_{\R^2} \nabla \cdot \Bigl( (\vv^{(n-2)} \cdot \nabla) \bar \vv^{(n-1)} + u^{(n-2)} \nabla \bar u^{(n-1)} \Bigr)
\partial_t U^{(n)} dx
\label{113c} \\
& \quad - \chi_n(t) \int_{\R^2} \partial_t \Bigl( (V^{(n-1)} \cdot \nabla) \bar u^{(n)} + U^{(n-1)} \nabla \cdot \bar \vv^{(n)} \Bigr)
\partial_t U^{(n)} dx
\label{113d} \\
& \quad - \chi_n(t) \int_{\R^2}
\partial_t \Bigl( (\vv^{(n-2)} \cdot \nabla) U^{(n)} + u^{(n-2)} \nabla \cdot V^{(n)} \Bigr)
\partial_t U^{(n)} dx \label{113e} \\
& \quad - (\chi_n(t) -\chi_{n-1}(t))
\int_{\R^2} \partial_t \Bigl( (\vv^{(n-2)} \cdot \nabla) \bar u^{(n-1)} + u^{(n-2)} \nabla \cdot \bar \vv^{(n-1)}
\Bigr) \partial_t U^{(n)} dx \label{113f} \\
& \quad + \int_{\R^2} \nabla \cdot \Bigl( (V^{(n-1)} \cdot \nabla) \Phi_2 + U^{(n-1)} \nabla \Phi_1 \Bigr) \partial_t U^{(n)} dx \label{113g} \\
& \quad - \int_{\R^2} \partial_t \Bigl( (V^{(n-1)} \cdot \nabla) \Phi_1 + U^{(n-1)} \nabla \cdot \Phi_2 \Bigr)
\partial_t U^{(n)} dx \label{113h} \\
& \quad + \int_{\R^2} \nabla \cdot \Bigl(
\bigl( V^{(n-1)}  \cdot \nabla \bigr) h_2 + U^{(n-1)} \nabla h_1 \Bigr) \partial_t U^{(n)} dx \label{113i} \\
& \quad - \int_{\R^2} \partial_t
\Bigl( \bigl( V^{(n-1)}  \cdot \nabla \bigr) h_1 + U^{(n-1)}  \nabla \cdot h_2 \Bigr) \partial_t U^{(n)} dx, \label{113j}
\end{align}

By using \eqref{e127}, \eqref{e100a}--\eqref{e100c} and \eqref{132}, we have
\begin{align*}
& | \eqref{113a} | + | \eqref{113d}|+ | \eqref{113g}|+ |\eqref{113h}| +|\eqref{113i}|+|\eqref{113j}| \\
\lsm & \frac {\epsilon}{\langle t \rangle} w_{n-1}(t) \cdot w_n(t).
\end{align*}

Also we have
\begin{align*}
& | \eqref{113c} | + | \eqref{113f} | \\
\lsm & | \chi_n(t) -\chi_{n-1}(t) | \cdot \frac {\epsilon^2} {\langle t \rangle^2} \cdot w_n(t).
\end{align*}

By using integration by parts, we write \eqref{113b} as
\begin{align}
\eqref{113b} & = -\chi_n(t) \int_{\R^2} u^{(n-2)} \nabla U^{(n)} \cdot \partial_t \nabla U^{(n)} dx \label{113ba} \\
& \quad - \chi_n (t) \sum_{1\le i,j \le 2} \int_{\R^2} \vv_i^{(n-2)} \partial_{x_i} V_j^{(n)} \partial_t \partial_{x_j} U^{(n)} dx.
\label{113bb}
\end{align}

For \eqref{113ba} we use Leibniz in time and this gives
\begin{align}
\eqref{113ba} &= - \frac d {dt} \Bigl( \frac 12 \chi_n(t) \int_{\R^2} u^{(n-2)} | \nabla U^{(n)} |^2 dx \Bigr) \label{113baa} \\
&\quad + \frac 12 \chi_n^\prime(t) \int_{\R^2} u^{(n-2)} | \nabla U^{(n)} |^2 dx \label{113bab} \\
& \quad + \frac 12 \chi_n(t) \int_{\R^2} \partial_t u^{(n-2)} |\nabla U^{(n)} |^2 dx. \label{113bac}
\end{align}

The first term \eqref{113baa} will be absorbed into the LHS of \eqref{113a}. By using \eqref{e127}, \eqref{e100a}--\eqref{e100c},
the other two terms \eqref{113bab}, \eqref{113bac}
can be easily bounded as
\begin{align*}
|\eqref{113bab}| + | \eqref{113bac} | \lsm \frac {\epsilon} {\langle t \rangle} w_n(t)^2.
\end{align*}

Next we consider the term \eqref{113bb}. By using repeatedly integration by parts, we have
\begin{align}
\eqref{113bb} & = \chi_n(t) \sum_{1\le i,j\le 2} \int_{\R^2} \partial_{x_i} \vv_i^{(n-2)} V_j^{(n)} \partial_t \partial_{x_j} U^{(n)} dx \notag \\
& \qquad + \chi_n(t) \sum_{1\le i,j\le 2} \int_{\R^2} \vv_i^{(n-2)} V_j^{(n)} \partial_t \partial_{x_j} \partial_{x_i} U^{(n)} dx \notag \\
& = -\chi_n(t) \sum_{1\le i,j\le 2} \int_{\R^2} \partial_{x_i} \vv_i^{(n-2)} \partial_{x_j} V_j^{(n)} \partial_t U^{(n)} dx \label{118a} \\
& \quad -\chi_n(t) \sum_{1\le i,j\le 2} \int_{\R^2} \partial_{x_j} \partial_{x_i} \vv_i^{(n-2)} V_j^{(n)} \partial_t U^{(n)} dx \label{118b} \\
& \quad - \frac d {dt} \Bigl( \chi_n(t)  \sum_{1\le i,j\le 2} \int_{\R^2} \partial_{x_j} \vv_i^{(n-2)} V_j^{(n)} \partial_{x_i} U^{(n)} dx \notag \\
&\qquad + \chi_n(t)\sum_{1\le i,j\le 2} \int_{\R^2} \vv_i^{(n-2)} \partial_{x_j} V_j^{(n)} \partial_{x_i} U^{(n)} dx \Bigr) \label{118c} \\
& \quad + \chi_n^\prime (t)  \sum_{1\le i,j\le 2} \int_{\R^2} \partial_{x_j} \vv_i^{(n-2)} V_j^{(n)} \partial_{x_i} U^{(n)} dx \label{118d} \\
&\quad + \chi_n^\prime(t)\sum_{1\le i,j\le 2} \int_{\R^2} \vv_i^{(n-2)} \partial_{x_j} V_j^{(n)} \partial_{x_i} U^{(n)} dx \label{118e} \\
&\quad + \chi_n(t) \sum_{1\le i,j\le 2} \int_{\R^2} \partial_{x_j} \partial_t \vv_i^{(n-2)} V_j^{(n)} \partial_{x_i} U^{(n)} dx \label{118f} \\
& \quad + \chi_n(t) \sum_{1\le i,j\le 2} \int_{\R^2} \partial_t \vv_i^{(n-2)} \partial_{x_j} V_j^{(n)} \partial_{x_i} U^{(n)} dx \label{118g} \\
& \quad - \chi_n(t) \sum_{1\le i,j\le 2} \int_{\R^2} \partial_t V_j^{(n)} \partial_{x_j} \Bigl( \partial_{x_i} U^{(n)} \cdot \vv_i^{(n-2)} \Bigr) dx
\label{118h} \\
& \quad + \chi_n(t) \sum_{1\le i,j\le 2} \int_{\R^2} \partial_t V_j^{(n)} \partial_{x_i} U^{(n)} \partial_{x_j} \vv_i^{(n-2)} dx \label{118i}
\end{align}

We shall absorb \eqref{118c} into the LHS of \eqref{e127}. We will make no changes to the term \eqref{118h} since it will cancel with a corresponding
term in the calculation of $V^{(n)}$. For the rest of the terms, we use again \eqref{e127}, \eqref{e100a}--\eqref{e100c} to bound them as
\begin{align*}
& |\eqref{118a}| + | \eqref{118b}|+|\eqref{118d}|+|\eqref{118e}|+|\eqref{118f}|+|\eqref{118g}|+|\eqref{118i}| \\
\lsm & \frac {\epsilon}{\langle t \rangle} w_n(t)^2.
\end{align*}

This ends the estimate of \eqref{113bb} and \eqref{113b}. By a similar but tedious calculation, we also have
\begin{align*}
\eqref{113e} &= \chi_n(t) \int_{\R^2} \partial_t \Bigl( u^{(n-2)} \nabla U^{(n)} \Bigr) \cdot \partial_t V^{(n)} dx \\
& \qquad + \tilde E_1,
\end{align*}
where
\begin{align*}
|\tilde E_1| \lsm \frac {\epsilon} {\langle t \rangle} w_n(t)^2.
\end{align*}

Collecting all the estimates, we obtain
\begin{align}
 & \frac d {dt} \Bigl( \frac 12 \| U^{(n)} (t) \|_{L_x^2}^2 + \frac 12 \| ( U^{(n)} (t) )^\prime \|_{L_x^2}^2 \Bigr) \notag \\
= & - \frac d {dt} \Bigl( \frac 12 \chi_n(t) \int_{\R^2} u^{(n-2)} | \nabla U^{(n)} |^2 dx \Bigr) \notag \\
& \qquad - \frac d {dt} \Bigl( \chi_n(t)  \sum_{1\le i,j\le 2} \int_{\R^2} \partial_{x_j} \vv_i^{(n-2)} V_j^{(n)} \partial_{x_i} U^{(n)} dx \notag \\
&\qquad + \chi_n(t)\sum_{1\le i,j\le 2} \int_{\R^2} \vv_i^{(n-2)} \partial_{x_j} V_j^{(n)} \partial_{x_i} U^{(n)} dx \Bigr) \notag \\
& \qquad - \chi_n(t) \sum_{1\le i,j\le 2} \int_{\R^2} \partial_t V_j^{(n)} \partial_{x_j} \Bigl( \partial_{x_i} U^{(n)} \cdot \vv_i^{(n-2)} \Bigr) dx
\notag \\
& \qquad +\chi_n(t) \int_{\R^2} \partial_t \Bigl( u^{(n-2)} \nabla U^{(n)} \Bigr) \cdot \partial_t V^{(n)} dx \notag \\
& \quad + \tilde E_2, \label{601a}
 \end{align}
 where
 \begin{align*}
 |\tilde E_2| &\lsm \frac {\epsilon} {\langle t \rangle } \Bigl(w_n(t)^2+ w_n(t) \cdot w_{n-1}(t) \Bigr) \\
 &\quad + |\chi_n(t)-\chi_{n-1}(t)| \cdot \frac {\epsilon^2} {\langle t \rangle^2} w_n(t).
 \end{align*}

 Next we deal with the estimate of $V^{(n)}$. This time we need to take the inner product of both sides of \eqref{Nov12b} with
 $\partial_t V^{(n)}$ integrate by parts. Proceeding in a similar manner as in the calculation of $U^{(n)}$ and after a long and tedious
 calculation, we obtain
 \begin{align}
 & \frac d {dt} \Bigl( \frac 12 \| V^{(n)} (t) \|_{L_x^2}^2 + \frac 12 \| ( V^{(n)} (t) )^\prime \|_{L_x^2}^2 \Bigr) \notag \\
= &  \chi_n(t) \sum_{1\le i,j\le 2} \int_{\R^2} \partial_t V_j^{(n)} \partial_{x_j} \Bigl( \partial_{x_i} U^{(n)} \cdot \vv_i^{(n-2)} \Bigr) dx
\notag \\
& \qquad -\chi_n(t) \int_{\R^2} \partial_t \Bigl( u^{(n-2)} \nabla U^{(n)} \Bigr) \cdot \partial_t V^{(n)} dx \notag \\
& \quad + \tilde E_3, \label{601b}
 \end{align}
 where
 \begin{align*}
 |\tilde E_3| &\lsm \frac {\epsilon} {\langle t \rangle } \Bigl(w_n(t)^2+ w_n(t) \cdot w_{n-1}(t) \Bigr) \\
 &\quad + |\chi_n(t)-\chi_{n-1}(t)| \cdot \frac {\epsilon^2} {\langle t \rangle^2} w_n(t).
 \end{align*}

 Adding together \eqref{601a}, \eqref{601b} and rearranging terms, we obtain
 \begin{align}
 & \frac d {dt} \Bigl( \frac 12 \| U^{(n)} (t) \|_{L_x^2}^2 + \frac 12 \| ( U^{(n)} (t) )^\prime \|_{L_x^2}^2  \notag \\
&\qquad + \frac 12 \| V^{(n)} (t) \|_{L_x^2}^2 + \frac 12 \| ( V^{(n)} (t) )^\prime \|_{L_x^2}^2 \notag \\
 &\qquad + \frac 12 \chi_n(t) \int_{\R^2} u^{(n-2)} | \nabla U^{(n)} |^2 dx  \notag \\
&\qquad + \chi_n(t)  \sum_{1\le i,j\le 2} \int_{\R^2} \partial_{x_j} \vv_i^{(n-2)} V_j^{(n)} \partial_{x_i} U^{(n)} dx \notag \\
&\qquad + \chi_n(t)\sum_{1\le i,j\le 2} \int_{\R^2} \vv_i^{(n-2)} \partial_{x_j} V_j^{(n)} \partial_{x_i} U^{(n)} dx \Bigr) \notag \\
= & \tilde E_4, \label{619a}
\end{align}
where
 \begin{align}
 |\tilde E_4| &\lsm \frac {\epsilon} {\langle t \rangle } \Bigl(w_n(t)^2+ w_n(t) \cdot w_{n-1}(t) \Bigr) \notag \\
 &\quad + |\chi_n(t)-\chi_{n-1}(t)| \cdot \frac {\epsilon^2} {\langle t \rangle^2} w_n(t). \label{619b}
 \end{align}

Now denote
\begin{align}
W_n(t) &=
 \frac 12 \| U^{(n)} (t) \|_{L_x^2}^2 + \frac 12 \| ( U^{(n)} (t) )^\prime \|_{L_x^2}^2  \notag \\
&\qquad + \frac 12 \| V^{(n)} (t) \|_{L_x^2}^2 + \frac 12 \| ( V^{(n)} (t) )^\prime \|_{L_x^2}^2 \notag \\
 &\qquad + \frac 12 \chi_n(t) \int_{\R^2} u^{(n-2)} | \nabla U^{(n)} |^2 dx  \label{616a} \\
&\qquad + \chi_n(t)  \sum_{1\le i,j\le 2} \int_{\R^2} \partial_{x_j} \vv_i^{(n-2)} V_j^{(n)} \partial_{x_i} U^{(n)} dx \label{616b} \\
&\qquad + \chi_n(t)\sum_{1\le i,j\le 2} \int_{\R^2} \vv_i^{(n-2)} \partial_{x_j} V_j^{(n)} \partial_{x_i} U^{(n)} dx. \label{616c}
\end{align}

By using \eqref{e127}, \eqref{e100a}--\eqref{e100c}, it is clear that for $\epsilon$ sufficiently small (depending on an absolute constant), we
have
\begin{align*}
& | \eqref{616a} | + | \eqref{616b} | + | \eqref{616c} | \\
\le & \frac 1 {10} \| U^{(n)} (t) \|_{L_x^2}^2 + \frac 12 \| ( U^{(n)} (t) )^\prime \|_{L_x^2}^2  \notag \\
&\qquad + \frac 1 {10} \| V^{(n)} (t) \|_{L_x^2}^2 + \frac 12 \| ( V^{(n)} (t) )^\prime \|_{L_x^2}^2. \notag \\
\end{align*}

Therefore for some absolute constant $C>0$, we have
\begin{align*}
\frac 1 C W_n(t) \le w_n(t) \le C W_n(t).
\end{align*}

The inequalities \eqref{619a} and \eqref{619b} now give us
\begin{align*}
\left|\frac d {dt} W_n(t) \right| &\lsm \frac {\epsilon} {\langle t \rangle} (W_n(t) +W_{n-1}(t) ) \\
& \qquad + |\chi_n(t)-\chi_{n-1}(t)|\cdot \frac {\epsilon^2} {\langle t \rangle^2}, \quad \forall\, t\ge 0,
\end{align*}
where we have canceled a factor of $W_n(t)$ on both sides of the inequality.

We then inductively assume
\begin{align*}
W_{n}(t) \le \frac {\delta_n} { \langle t \rangle^{\frac 12}}.
\end{align*}
Therefore
\begin{align*}
\left| \frac d {dt} W_{n+1}(t) \right| &\lsm   {\delta_n} \cdot \frac {\epsilon}{\langle t \rangle^{\frac 32}}
 + W_{n+1}(t) \cdot \frac {\epsilon} {\langle t \rangle } \\
&\qquad + \frac {\epsilon^2 } {\langle t \rangle^2 } \cdot | \chi(\frac t {2^n}) - \chi ( \frac t {2^{n-1}}) | \\
& \lsm \frac {\delta_n \cdot \epsilon + \epsilon^2 \cdot 2^{-\frac n2}} {\langle t \rangle^{\frac 32}}
+W_{n+1}(t) \cdot \frac {\epsilon}{\langle t \rangle},
\end{align*}
where in the last inequality we have used the fact that the function $|\chi (\frac t {2^n}) - \chi(\frac t {2^{n-1}})|$ is
localized to the region $t\sim 2^n$.
By \eqref{e127}, we have the a priori estimate
\begin{align*}
W_{n+1}(t) \lsm \frac {\epsilon} {\langle t \rangle}.
\end{align*}
We need the following Gronwall lemma
\begin{lem}[Infinite time Gronwall, version 2] \label{gron_2}
Let B(t) be a nonnegative smooth function on $[1,\infty)$ satisfying
\begin{align}
\left|\frac d {dt} B(t) \right| \le \frac \epsilon{t} B(t) + \frac {\delta} {t^{1+\sigma}}
\end{align}
and
\begin{align}
B(t) \le \frac {K}{t^\sigma}, \quad \forall\, t \ge 1.
\end{align}
Here $0<\epsilon<1$, $\sigma>0$, $\delta>0$, $K>0$ are constants. Then for $\epsilon<\sigma$.
\begin{align}
B(t) \le \frac {\delta}{\sigma-\epsilon} \cdot  \frac 1 {t^\sigma}, \quad\, \forall\, t\ge 1,
\end{align}
where $C_1(\sigma)$ is a constant depending only on $\sigma$.
\end{lem}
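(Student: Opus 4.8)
The plan is to repeat, almost verbatim, the bootstrap used in the proof of Lemma \ref{gron_1}, the only change being that integrating the tail $\int_t^\infty \tau^{-1-\sigma}\,d\tau$ now produces the factor $\sigma^{-1}t^{-\sigma}$ in place of $t^{-1}$. This is exactly where the hypothesis $\epsilon<\sigma$ will be used: the geometric series that governs the iteration will have ratio $\epsilon/\sigma$, so convergence requires $\epsilon/\sigma<1$.

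First I would note that the a priori bound $B(t)\le Kt^{-\sigma}$ together with $\sigma>0$ forces $B(t)\to 0$ as $t\to\infty$, so by the fundamental theorem of calculus $B(t)\le \int_t^\infty |B'(\tau)|\,d\tau$. Plugging the two hypotheses of the lemma into this inequality gives
\begin{align*}
B(t)\le \int_t^\infty\Bigl(\frac{\epsilon}{\tau}\cdot\frac{K}{\tau^\sigma}+\frac{\delta}{\tau^{1+\sigma}}\Bigr)\,d\tau
=\frac{\epsilon K+\delta}{\sigma}\cdot\frac{1}{t^\sigma}.
\end{align*}
Next I would set up the induction: assuming $B(t)\le C_j\,t^{-\sigma}$ for all $t\ge 1$, the same computation (now using the differential inequality with the improved bound inserted in the first term) yields $B(t)\le C_{j+1}\,t^{-\sigma}$ with
\begin{align*}
C_{j+1}=\frac{\epsilon C_j+\delta}{\sigma},\qquad C_0=K.
\end{align*}
Since $\epsilon/\sigma<1$, the affine map $c\mapsto \sigma^{-1}(\epsilon c+\delta)$ is a contraction on $[0,\infty)$ with unique fixed point $\delta/(\sigma-\epsilon)$; indeed $C_j-\frac{\delta}{\sigma-\epsilon}=(\epsilon/\sigma)^{j}\bigl(K-\frac{\delta}{\sigma-\epsilon}\bigr)\to 0$. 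Passing to the limit $j\to\infty$ in $B(t)\le C_j\,t^{-\sigma}$ then gives the desired bound $B(t)\le \frac{\delta}{\sigma-\epsilon}\cdot t^{-\sigma}$.

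There is essentially no serious obstacle here; this is a soft bootstrap. The only two points deserving a line of care are: (i) the justification of $B(t)\le\int_t^\infty|B'|\,d\tau$, which is precisely where the qualitative decay $B(t)\le Kt^{-\sigma}$ is needed; and (ii) the convergence of the recursion $(C_j)$, which is precisely where $\epsilon<\sigma$ is needed. As in the remark after Lemma \ref{gron_1}, the salient feature is that the final constant $\delta/(\sigma-\epsilon)$ is independent of $K$. (The constant $C_1(\sigma)$ named in the statement does not actually occur in the displayed conclusion; the correct and sharper statement is simply $B(t)\le \frac{\delta}{\sigma-\epsilon}\,t^{-\sigma}$, in exact analogy with Lemma \ref{gron_1}.)
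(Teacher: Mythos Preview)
Your proposal is correct and is exactly the approach the paper intends: the paper's own proof consists of the single sentence ``The proof of Lemma \ref{gron_2} is similar to that of Lemma \ref{gron_1}. We omit the details here,'' and what you have written is precisely that analogue, with the integral $\int_t^\infty \tau^{-1-\sigma}\,d\tau=\sigma^{-1}t^{-\sigma}$ replacing $\int_t^\infty \tau^{-2}\,d\tau=t^{-1}$ and the resulting contraction ratio $\epsilon/\sigma$ replacing $\epsilon$. Your side remark about the stray $C_1(\sigma)$ in the statement is also accurate.
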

\begin{proof}
The proof of Lemma \ref{gron_2} is similar to that of Lemma \ref{gron_1}. We omit the
details here.
\end{proof}
By using Lemma \ref{gron_2} and taking $\epsilon$ sufficiently small, we get
\begin{align*}
W_{n+1}(t) \le \frac {\delta_{n+1}} {\langle t \rangle^{\frac 12}},
\end{align*}
where $\delta_{n+1} = \theta \cdot ( \delta_n + 2^{-\frac n2} )$, where $0<\theta<1$ is some constant.
It is obvious that $\delta_n \le const\cdot \alpha^n$ for some $0<\alpha<1$. Hence the sequence
$( \langle t \rangle^{\frac 12} \bar u^{(n+1)}(t), \langle t \rangle^{\frac 12} \bar \vv^{(n+1)}(t) )$
is Cauchy and converges to a nontrivial limiting function in $H^1$.
By interpolation inequalities $(\bar u^{(n+1)}(t), \bar \vv^{(n+1)}(t))$ converges strongly in $H^{m-1}$ for every $t$. For the highest norm
$H^{m}$, upon passing to a subsequence if necessary,  we have $(\bar u^{(n+1)}(t), \bar \vv^{(n+1)}(t))$ tends to the limiting
solution $(\bar u(t), \bar \vv(t))$ weakly for each $t$.
Hence
\begin{align*}
&\| \bar u(t) \|_{H^m} + \| \bar \vv(t) \|_{H^m} + \|(\bar u(t))^\prime \|_{H^m} \\
&\qquad + \| (\bar \vv(t) )^\prime \|_{H^m} \lsm \frac {\epsilon}{\langle t \rangle}.
\end{align*}
We have obtained the desired classical solution.

\end{document}